\newtheorem{theorem}{Theorem}[section]
\newtheorem{lemma}[theorem]{Lemma}
\newtheorem{corollary}[theorem]{Corollary}
\newtheorem{proposition}[theorem]{Proposition}
\newtheorem{definition}{Definition}[section]
\newtheorem{conjecture}[theorem]{Conjecture}
\newtheorem{question}[theorem]{Question}
\theoremstyle{remark}
\newtheorem{remark}[theorem]{Remark}
\newtheorem{example}[theorem]{Example}
\newcommand{\Z}{\mathbb{Z}}
\newcommand{\ZZ}{\operatorname{Z}}
\newcommand{\N}{\mathbb{N}}
\renewcommand{\P}{\operatorname{P}}
\renewcommand{\H}{\operatorname{H}}
\DeclareMathOperator{\Ap}{Ap}
\newcommand\pieter[1]{{\textcolor{blue}{#1}}}
\newcommand\andres[1]{{\textcolor{magenta}{#1}}}
\newcommand\alex[1]{{\textcolor{red}{#1}}}
\let\@@pmod\pmod
\DeclareRobustCommand{\pmod}{\@ifstar\@pmods\@@pmod}
\def\@pmods#1{\mkern4mu({\operator@font mod}\mkern 6mu#1)}
\title{Cyclotomic exponent sequences of numerical semigroups}
\author[Ciolan]{Alexandru Ciolan}
\address{Max-Planck-Institut f\"ur Mathematik, Vivatsgasse 7, D-53111 Bonn, Germany}
\email{ciolan@mpim-bonn.mpg.de}
\author[Garc\'ia-S\'anchez]{Pedro A. Garc\'ia-S\'anchez}
\address{Departamento de \'Algebra, Universidad de Granada, E-18071 Granada, Espa\~na}
\email{pedro@ugr.es}
\thanks{The second author is supported by the project MTM2017--84890--P, which is funded by Ministerio de Economía y Competitividad and Fondo Europeo de Desarrollo Regional FEDER, and by the Junta de Andalucía Grant Number FQM--343.}
\author[Herrera-Poyatos]{Andr\'es Herrera-Poyatos}
\address{\parbox{\linewidth}{Department of Computer Science, University of Oxford, Wolfson Building, Parks Road, Oxford,\\OX1~3QD, UK.}}
\email{andres.herrerapoyatos@cs.ox.ac.uk}
\thanks{The third author was supported by an Initiation to Research Fellowship from the University of Granada in the academic year 2017-2018. He is currently supported by an Oxford-DeepMind Graduate Scholarship and an EPSRC Doctoral Training Partnership.}
\author[Moree]{Pieter Moree}
\address{Max-Planck-Institut f\"ur Mathematik, Vivatsgasse 7, D-53111 Bonn, Germany}
\email{moree@mpim-bonn.mpg.de}
\subjclass[2010]{20M14, 11C08, 11B68}
\begin{document}
\date{}

\begin{abstract}
We  study the \textit{cyclotomic exponent sequence} of a numerical semigroup $S,$ and we
compute its values
at the gaps of $S,$  the elements of $S$ with unique representations in terms of minimal generators, and the Betti elements $b\in S$ for which the set $\{a \in \operatorname{Betti}(S) : a \le_{S}b\}$ is totally ordered with respect to $\le_S$ (we write $a \le_S b$ whenever $a - b \in S,$ with $a,b\in S$).
This allows us to characterize certain semigroup families, such as Betti-sorted or Betti-divisible numerical semigroups, as well as numerical semigroups with a unique Betti element, in terms of their cyclotomic exponent sequences. 
Our results also apply to \textit{cyclotomic numerical semigroups}, which are numerical semigroups with a finitely supported cyclotomic exponent sequence. We show that cyclotomic numerical semigroups with certain cyclotomic exponent sequences are complete intersections, thereby making progress towards proving the conjecture of Ciolan, Garc\'ia-S\'anchez and 
Moree (2016) stating that $S$ is cyclotomic if and only if it is a complete intersection.
\end{abstract}

\keywords{Numerical semigroups, cyclotomic polynomials, Betti elements, complete intersections}
\maketitle

\section{Introduction}

A \emph{numerical semigroup} $S$ is a submonoid of $\mathbb N$ (the set of non-negative integers) under addition, with finite complement in $\mathbb N$. The non-negative integers that are not in $S$ are its \emph{gaps}, and the set of gaps is denoted by $\operatorname{G}(S)$. The largest gap is the \emph{Frobenius number} of $S$, denoted by $\operatorname{F}(S)$. The number of gaps of $S$, also known as the \emph{genus} of $S$, is denoted by $\operatorname{g(S)}$.  A numerical semigroup admits a unique minimal generating system; its elements are called \emph{minimal generators}, and its cardinality the \emph{embedding dimension}, denoted by $\mathrm e(S)$. The smallest positive integer in $S$ is the \emph{multiplicity} of $S$ and is denoted by $\mathrm m(S)$. For an introduction to the theory of numerical semigroups the reader is referred, e.g., to \cite{ns}.  

To a numerical semigroup $S$ 
we can associate its \emph{Hilbert series},  defined as the formal power series $\operatorname{H}_S(x)=\sum_{s\in S}x^s\in \mathbb Z[\![x]\!],$ and its \emph{semigroup polynomial}, given by  $\operatorname{P}_S(x)=(1-x)\sum_{s\in S}x^s.$ (Indeed, since all elements larger than $\operatorname{F}(S)$ are in $S$ and $\operatorname{F}(S)$ is not, $\operatorname{P}_S(x)$ is a monic polynomial of degree $\operatorname{F}(S)+1.$)
In the sequel we say that a formal identity of the form $A(x)=B(x)$ is true if it holds in $\mathbb Z[\![x]\!]$. For notational convenience we will often denote the infinite sum $1+x^d+x^{2d}+\cdots$ by $(1-x^d)^{-1},$ where $d\in\mathbb N.$

 It is not difficult to conclude that the coefficients of $\operatorname{P}_S$ are in $\{-1,0,1\}$ and that consecutive non-zero coefficients alternate in sign. On noting the formal identity $\operatorname{H}_S(x)=(1-x)^{-1}-\sum_{s\in \operatorname{G}(S)}x^s$, we have
\begin{equation}
\label{eq:psx}
\operatorname{P}_S(x)=1+(x-1)\sum_{s\in \operatorname{G}(S)}x^s,
\end{equation}
and so $\operatorname{P}_S(1) = 1$. In addition, $\operatorname{P}_S(0) = 1$ and, by \cite[Lemma 11]{cyclotomic}  (see  also Lemma~\ref{lem:Wittexpansionproof}), there exist \emph{unique} integers $e_j$ such that the formal identity
\begin{equation} \label{eq:cyc-exp}
  \operatorname{P}_S(x)= \prod_{j = 1}^\infty (1 - x^j)^{e_j}
\end{equation}
holds. We call the sequence $\mathbf{e}=\{e_j\}_{j \ge1}$ the \emph{cyclotomic exponent sequence} of $S$ and we will use this notation throughout the paper.

Cyclotomic exponent sequences were introduced in \cite{cyclotomic} as a tool for studying cyclotomic numerical semigroups; we will come back to this later in this section. The purpose of this paper is to initiate the study of \emph{any} numerical semigroup by means of its cyclotomic exponent sequence. Our ultimate goal is to characterize special families of numerical semigroups in terms of properties of their cyclotomic exponent sequences.

Our first main result determines the exponent sequence of $S$ at  gaps and minimal generators.

\begin{theorem} \label{thm:ces:generators}
  If $S\ne\mathbb N$ is a numerical semigroup and $\mathbf{e}$ is its cyclotomic exponent sequence, then
  \begin{enumerate}[{\rm a)}]
      \item $e_1 = 1;$
      \item $e_j = 0$ for every $j \ge 2$ not in $S;$
      \item $e_j = -1$ for every minimal generator $j$ of $S;$
      \item $e_j = 0$ for every $j\in S$ that has only one factorization and is not a minimal generator.
  \end{enumerate}
\end{theorem}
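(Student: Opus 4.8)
The plan is to pin down $e_1$ by a single coefficient comparison, then rewrite the defining identity \eqref{eq:cyc-exp} in terms of the Hilbert series and prove items b)--d) by one strong induction on $j$. Throughout, write $[x^n]f$ for the coefficient of $x^n$ in $f\in\mathbb Z[\![x]\!]$, let $\mathsf d(s)$ denote the number of factorizations of $s\in S$, and recall $1\notin S$ (else $S=\mathbb N$), so every minimal generator is $\geq 2$.

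\textbf{Step 1 (item a).} Comparing the coefficient of $x$ in $\P_S(x)=\prod_{j\geq1}(1-x^j)^{e_j}$: on the left, $\P_S(x)=(1-x)\H_S(x)=1-x+\cdots$, so $[x^1]\P_S=-1$; on the right, only the factor $(1-x)^{e_1}$ contributes modulo $x^2$, giving $1-e_1x$. Hence $e_1=1$. Substituting this back and cancelling the nonzero factor $1-x$ in the integral domain $\mathbb Z[\![x]\!]$ turns \eqref{eq:cyc-exp} into the more convenient identity
\[ \H_S(x)=\prod_{j\geq2}(1-x^j)^{e_j}. \]

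\textbf{Step 2 (recursion).} Fix $n\geq2$. Reducing the last identity modulo $x^{n+1}$ kills all factors with $j>n$, and splitting off $(1-x^n)^{e_n}\equiv 1-e_nx^n$ yields $e_n=[x^n]Q_n(x)-\mathbf 1[n\in S]$, where $Q_n(x):=\prod_{j=2}^{n-1}(1-x^j)^{e_j}$. The induction hypothesis is precisely that b)--d) hold at every index below $n$; this fixes $e_j=-1$ for a minimal generator $j<n$ and $e_j=0$ whenever $j<n$ is a gap or an element of $S$ with a unique factorization, so
\[ Q_n(x)=\Big(\,\prod_{\substack{\lambda<n\\ \lambda\text{ minimal generator}}}(1-x^\lambda)^{-1}\Big)\,T_n(x),\qquad T_n(x):=\prod_{\substack{b<n,\ b\in S\\ \mathsf d(b)\geq2}}(1-x^b)^{e_b}, \]
the exponents $e_b$ of the second product being exactly the ones the theorem leaves undetermined; I will show they play no role here. (Note the vacuous case $n=2$ already serves as the base of the induction.)

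\textbf{Step 3 (extracting $[x^n]$ and the case split).} The first factor of $Q_n$ is $\sum_{s\geq0}\mathsf d_{<n}(s)\,x^s$, where $\mathsf d_{<n}(s)$ counts the factorizations of $s$ using only generators $<n$. Since any factorization of $s$ uses only generators $\leq s$, one gets $\mathsf d_{<n}(s)=\mathsf d(s)$ for $s<n$ and $\mathsf d_{<n}(n)=\mathsf d(n)-\mathbf 1[n\text{ is a minimal generator}]$, the missing term being the factorization $n=n$. The series $T_n$ has constant term $1$, and its nonzero coefficients sit only at non-negative integer combinations of elements of $S$ with at least two factorizations; a short argument (replace one summand by two of its distinct factorizations) shows every such positive exponent $\ell$ again satisfies $\mathsf d(\ell)\geq2$. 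Multiplying the two factors,
\[ e_n=\mathsf d(n)-\mathbf 1[n\text{ is a minimal generator}]-\mathbf 1[n\in S]+\sum_{k=0}^{n-1}\mathsf d(k)\,[x^{\,n-k}]T_n(x). \]
In each of b), c), d) we have $\mathsf d(n)\leq1$. A nonzero summand $\mathsf d(k)\,[x^{n-k}]T_n$ would force $k\in S\cup\{0\}$ and $n-k\in S$ with $\mathsf d(n-k)\geq2$; concatenating a factorization of $k$ with two distinct factorizations of $n-k$ then produces $\mathsf d(n)\geq2$, a contradiction. Hence the sum vanishes and $e_n=\mathsf d(n)-\mathbf 1[n\text{ is a minimal generator}]-\mathbf 1[n\in S]$, which is $0$ when $n\notin S$ (item b), $1-1-1=-1$ when $n$ is a minimal generator (item c), and $1-0-1=0$ when $n\in S$ has a unique factorization but is not a minimal generator (item d).

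\textbf{Main obstacle.} The delicate point is Step 3: factoring $Q_n$ correctly via the induction hypothesis, the boundary bookkeeping at degree $n$ (the correction $\mathbf 1[n\text{ is a minimal generator}]$ inside $\mathsf d_{<n}(n)$), and above all the structural claim that $T_n$ is supported on semigroup elements with at least two factorizations --- this is what makes the undetermined exponents $e_b$ drop out of the computation. The surrounding power-series manipulations in $\mathbb Z[\![x]\!]$ are routine.
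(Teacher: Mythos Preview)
Your proof is correct and follows essentially the same coefficient-comparison strategy as the paper, which proves the four items via separate lemmas (Lemmas~\ref{lem:ces:1}--\ref{lem:ces:one-factorization}) in sequence rather than in one unified strong induction. The paper first establishes a), then b), then c), and only after all three are known rewrites $\H_S$ as in \eqref{eq:cyc-exp-H} to deduce d); your single induction handles b)--d) simultaneously by isolating the ``undetermined'' factor $T_n$ and observing that its support lies in elements with at least two factorizations, which is the same structural observation underlying the paper's use of \eqref{eq:cyc-exp-H} but made more explicit and earlier in the argument.
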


Our second main result determines $\mathbf{e}$ at certain Betti elements (see Section~\ref{sec:pre:presentations} for a definition of the latter and the related notion of $R$-classes). In order to introduce our findings, we need the following definitions. \par Let $(X, \le)$ be a partially ordered set. We define the set $\operatorname{U}(X)$ as 
\[
\operatorname{U}(X)=\{ x\in X :\ \downarrow\! x \text{ is totally ordered}\},
\]
where $\downarrow\! x=\{ y\in X : y\le x\}$. We note that 
\begin{equation} \label{eq:U-minimals}
    \operatorname{Minimals}_{\le} X = \operatorname{Minimals}_{\le} \operatorname{U}(X).
\end{equation}

We write $a\le_S b$ if $b-a\in S$.
Since $S$ is a cancellative monoid free of units, the relation $\le_S $ defines an order relation on $\mathbb{Z}.$  Moreover, for any $s\in S$, the set $\downarrow\! s$ (considered in $(S,\le_S)$)  is finite.
\par If $S$ is a numerical semigroup, we define the set \[
\mathcal{E}(S) = \{d \in \mathbb{N} : d \ge 2,~ e_d \ne 0, ~d \text{ is not a minimal generator}\}, 
\] notation which we will use throughout.
\par Our next result relates the partially ordered sets $(\operatorname{Betti}(S), \le_S)$ and $(\mathcal{E}(S),\le_S)$.
\begin{theorem} \label{thm:ces:betti}
 Let $S$ be a numerical semigroup with cyclotomic exponent sequence $\mathbf{e}$. Then $\operatorname{U}(\operatorname{Betti}(S)) = \operatorname{U}(\mathcal{E}(S))$. Moreover, for every $b \in \operatorname{U}(\operatorname{Betti}(S))$, the exponent $e_b$ is equal to the number of $R$-classes of $b$ minus $1$.
\end{theorem}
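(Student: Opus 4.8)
The plan is to convert the statement into a single identity for the coefficient of $x^b$ in an explicit product, and then to prove that identity by well‑founded induction on $(S,\le_S)$, the homological heart being a Koszul‑complex computation. Throughout, for $s\in S$ write $\mathsf z(s)$ for the number of factorizations of $s$ and $r(s)$ for the number of $R$-classes of $s$, and recall (Rosales) that $r(a)-1$ equals the number of binomials attached to $a$ in any minimal presentation of $S$.

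\emph{Step 1 (algebraic reformulation and localization).} Since $e_1=1$ by Theorem~\ref{thm:ces:generators}(a), the defining identity \eqref{eq:cyc-exp} yields $\operatorname{H}_S(x)=\operatorname{P}_S(x)/(1-x)=\prod_{j\ge 2}(1-x^j)^{e_j}$. By Theorem~\ref{thm:ces:generators}(b)--(d), for $j\ge 2$ the exponent $e_j$ is nonzero only if $j$ is a minimal generator (and then $e_j=-1$) or $j\in\mathcal E(S)$; writing $n_1,\dots,n_{\mathrm e(S)}$ for the minimal generators and $\zeta(x):=\prod_i(1-x^{n_i})^{-1}=\sum_{s\in S}\mathsf z(s)\,x^s$, this rearranges to $\operatorname{H}_S(x)=\zeta(x)\prod_{d\in\mathcal E(S)}(1-x^d)^{e_d}$. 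Every series here is supported on $S$, and for such series $f,g$ the coefficient $[x^b](fg)$ depends only on the coefficients of $f,g$ at exponents $\le_S b$, because $a+c=b$ with $a,c\in S$ forces $a,c\le_S b$. Hence, when extracting $[x^b]$, the factors $(1-x^d)^{e_d}$ with $d\not\le_S b$ may be dropped (the set $\{d\in\mathcal E(S):d\le_S b\}\subseteq\,\downarrow\! b$ being finite); since $[x^b]\operatorname{H}_S=1$ for $b\in S$, this gives, for all $b\in S$,
\[
 e_b=\Bigl[x^b\Bigr]\Bigl(\zeta(x)\!\!\prod_{\substack{d\in\mathcal E(S)\\ d<_S b}}(1-x^d)^{e_d}\Bigr)-1 .
\]

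\emph{Step 2 (induction and bookkeeping).} I would prove the theorem by well‑founded induction on $b$ with respect to $\le_S$ (legitimate as $\downarrow\! b$ is finite), proving the statement for all $c\le_S b$ simultaneously. I would first record three elementary facts: any $s\in S$ with $\mathsf z(s)\ge 2$ satisfies $a\le_S s$ for some $a\in\operatorname{Betti}(S)$; a $\le_S$-minimal Betti element has no two factorizations with common support, so its number of $R$-classes equals its number of factorizations; and every element of $S$ strictly below a $\le_S$-minimal Betti element has a unique factorization. Combining these with the induction hypothesis in a minimality argument shows that whenever $b\in\operatorname{U}(\operatorname{Betti}(S))$ or $b\in\operatorname{U}(\mathcal E(S))$, the set $\{d\in\mathcal E(S):d<_S b\}$ coincides with $\{a\in\operatorname{Betti}(S):a<_S b\}$, is a chain, and carries $e_a=r(a)-1$; in particular $\{a\in\operatorname{Betti}(S):a\le_S b\}$ is then a chain. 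Substituting into the displayed formula, the theorem reduces to the claim of Step 3, together with the observation that its right‑hand side $r(b)$ is $\ge 2$ exactly when $b\in\operatorname{Betti}(S)$ — which is what turns the coefficient formula into both the set equality $\operatorname{U}(\operatorname{Betti}(S))=\operatorname{U}(\mathcal E(S))$ and the asserted value of $e_b$, and closes the induction.

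\emph{Step 3 (the crux).} It remains to show: if $\{a\in\operatorname{Betti}(S):a\le_S b\}$ is totally ordered by $\le_S$, then
\[
 \Bigl[x^b\Bigr]\Bigl(\zeta(x)\!\!\prod_{\substack{a\in\operatorname{Betti}(S)\\ a<_S b}}(1-x^{a})^{\,r(a)-1}\Bigr)=r(b).
\]
I would pass to the numerical semigroup algebra $k[S]=k[x_1,\dots,x_{\mathrm e(S)}]/I_S$ over a field $k$, where $x_i$ has degree $n_i$ (so $\operatorname{H}_{k[x]}(x)=\zeta(x)$). Fix a minimal presentation and let $J$ be the ideal generated by the $\sum_{a<_S b}(r(a)-1)$ binomials attached to the Betti elements $a<_S b$. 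The Koszul complex $K_\bullet$ on this list of binomials is a bounded complex of graded free $k[x]$-modules with Euler characteristic series exactly $\zeta(x)\prod_{a<_S b}(1-x^{a})^{r(a)-1}$, and degreewise this Euler characteristic equals $\sum_i(-1)^i\operatorname{H}_{H_i(K_\bullet)}(x)$. The degree‑$b$ part of $H_0(K_\bullet)=k[x]/J$ has dimension $r(b)$: two factorizations of $b$ become equal there precisely when joined by a sequence of moves coming from relations at Betti elements $<_S b$; such a move fixes a nonempty common part of a factorization of $b$, hence never leaves an $R$-class, while for $b-n_i<_S b$ all Betti elements $\le_S b-n_i$ already lie $<_S b$, so the chosen relations resolve $b-n_i$ and therefore collapse each $R$-class of $b$ completely.

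\emph{Step 4 (the main obstacle).} What must still be proved is that $H_i(K_\bullet)_b=0$ for all $i\ge 1$; granting this, $[x^b]$ of the Euler characteristic equals $\dim_k H_0(K_\bullet)_b=r(b)$, finishing Step 3. This positive‑degree vanishing in the single degree $b$ is exactly where the hypothesis that $\{a\in\operatorname{Betti}(S):a\le_S b\}$ be a chain is indispensable: without it $b$ would lie above $\le_S$-incomparable Betti elements and could support genuine higher syzygies concentrated in degree $b$ (this is visible already in the minimal free resolution of $k[S]$, whose higher Betti numbers $\beta_{i,b}$ would then be nonzero). I expect this to be the technical heart of the proof, and I would attack it either through the depth/acyclicity criterion for Koszul complexes, read off in the strand of degree $b$, or by a direct combinatorial analysis showing that the relations from the chain $\{a\in\operatorname{Betti}(S):a<_S b\}$, pulled down to degree $b$, produce only "forest‑like" (i.e., Koszul) identifications among the factorizations of $b$, so that the alternating count of Step 3 telescopes to the number of connected components, namely $r(b)$.
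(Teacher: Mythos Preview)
Your framework is genuinely different from the paper's and is an interesting reformulation: you recast the coefficient identity as the degree-$b$ Euler characteristic of a Koszul complex on the binomials coming from the chain of Betti elements below $b$, whereas the paper works entirely combinatorially, introducing \emph{Betti restricted factorizations} $\operatorname{B}(s;\Lambda)$ and proving a recursion $r_i(s)=|\operatorname{B}(s;\Lambda_i)|$ that matches the power-series recursion for the coefficients $r_i$. Your Steps~1--3 are essentially correct: the localization in Step~1 is fine, the minimality argument in Step~2 can be made rigorous (take a $\le_S$-minimal putative Betti element $a<_S b$ outside $\{d\in\mathcal E(S):d<_S b\}$ and use the induction hypothesis to force $a\in\operatorname{U}(\operatorname{Betti}(S))$, hence $a\in\mathcal E(S)$, a contradiction), and the computation $\dim_k H_0(K_\bullet)_b=r(b)$ in Step~3 is right for pure-difference binomial ideals.

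The gap is Step~4, which you yourself flag as unproven. Neither suggested approach is convincing. The depth/acyclicity criterion is a global statement---the Koszul complex on $g_1,\dots,g_m$ is exact in positive degrees iff the sequence is regular---but the binomials attached to a chain of Betti elements are essentially never a regular sequence unless $S$ happens to be a complete intersection, so you cannot use this off the shelf; what you need is a degree-specific vanishing, for which there is no standard tool. The ``forest-like'' heuristic is too vague to evaluate. Note also that you actually only need the weaker statement $\sum_{i\ge1}(-1)^i\dim H_i(K_\bullet)_b=0$, but you do not address that either.

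The paper's analogue of your Step~4 is Theorem~\ref{thm:brf:sorted}: for $b\in\operatorname{U}(\operatorname{Betti}(S))$ not Betti-minimal, $\nabla_b$ has exactly one non-singleton $R$-class, i.e.\ $\operatorname{i}(b)=\operatorname{nc}(\nabla_b)-1$. This is obtained from a connectivity lemma for $\operatorname{B}(s;\Lambda)$ under the chain hypothesis (Lemma~\ref{lem:brf:sorted}), proved by a careful case-by-case induction that is the technical heart of the paper; combined with a uniqueness-of-decomposition statement (Lemma~\ref{lem:brf:sorted:2}), this yields the coefficient identity directly, without any homological input. If you want to push the Koszul route through, you would almost certainly need a combinatorial input of comparable strength---quite possibly the very disjointness/uniqueness statements the paper proves---to control the higher syzygies in degree $b$. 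As written, the proposal is an outline with the decisive step missing.
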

A direct consequence of \eqref{eq:U-minimals} and Theorem~\ref{thm:ces:betti} is that $\operatorname{Minimals}_{\le_S} \operatorname{Betti}(S) = \operatorname{Minimals}_{\le_S} \mathcal{E}(S)$. In order to prove Theorem~\ref{thm:ces:betti} we need to understand the graph of factorizations $\nabla_b$ of the elements $b$ in $\operatorname{U}(\operatorname{Betti}(S))$ (see Section~\ref{sec:pre:presentations} for a definition of $\nabla_b$). Our main technical result on this matter is Theorem~\ref{thm:brf:sorted}, which shows that when $b \in \operatorname{U}(\operatorname{Betti}(S)) \setminus \operatorname{Minimals}_{\le_S} \operatorname{Betti}(S)$, the graph $\nabla_b$ has exactly one connected component that is not a singleton. 

As a consequence of Theorem~\ref{thm:ces:betti} we are able to characterize some families of numerical semigroups solely in terms of their cyclotomic exponent sequences. Before stating our next result, let us define these families. In what follows, $S$ is a numerical semigroup. We say that $S$ is \emph{Betti-sorted} if $\operatorname{Betti}(S)$ is totally ordered with respect to $\le_S,$ and that $S$ is \emph{Betti-divisible} if $\operatorname{Betti}(S)$ is totally ordered with respect to the divisibility order in $\mathbb{N}$. These two families of numerical semigroups were introduced in \cite{isolated}, where the authors showed that they are complete intersections (see Section~\ref{sec:pre:ci} for a definition). The third family  we consider is that of numerical semigroups with a unique Betti element, which is obviously a subset of each of the two previous families. This family was studied in \cite{isolated} and \cite{single-betti}.

\begin{theorem} \label{thm:ces:char}
  For a numerical semgiroup $S$ the following assertions hold:
  \begin{enumerate}[{\rm a)}]
  \item The semigroup $S$ is Betti-sorted if and only if  $\mathcal{E}(S)$ is totally ordered by $\le_S$.
  \item The semigroup $S$ is Betti-divisible if and only if $\mathcal{E}(S)$ is totally ordered by the divisibility order.
  \item The semigroup $S$ has a unique Betti element if and only if $\mathcal{E}(S)$ is a singleton.
  \end{enumerate}
\end{theorem}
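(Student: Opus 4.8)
The plan is to derive Theorem~\ref{thm:ces:char} as a fairly direct consequence of Theorem~\ref{thm:ces:betti}, using the key identity $\operatorname{U}(\operatorname{Betti}(S)) = \operatorname{U}(\mathcal{E}(S))$ together with the observation that $\operatorname{Minimals}_{\le_S} \operatorname{Betti}(S) = \operatorname{Minimals}_{\le_S} \mathcal{E}(S)$. The first thing I would do is record an elementary lemma about partially ordered sets: for a poset $(X,\le)$ with all down-sets $\downarrow\! x$ finite, $X$ is totally ordered if and only if $\operatorname{U}(X) = X$, and similarly $X$ has a unique minimal element that lies below everything if and only if $X$ is totally ordered \emph{and} $\operatorname{Minimals}_\le X$ is a singleton. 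Actually the cleanest route is to note that $X$ is totally ordered precisely when $\operatorname{U}(X)=X$, since $\operatorname{U}(X)$ collects exactly the elements whose down-set is a chain.

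For part~a), I would argue as follows. If $S$ is Betti-sorted, then $(\operatorname{Betti}(S),\le_S)$ is a chain, so $\operatorname{U}(\operatorname{Betti}(S)) = \operatorname{Betti}(S)$; combined with $\operatorname{U}(\operatorname{Betti}(S)) = \operatorname{U}(\mathcal{E}(S)) \subseteq \mathcal{E}(S)$, and with the fact that $\operatorname{U}(\mathcal{E}(S))$ being the whole of a poset forces that poset to be a chain, we would get that $\mathcal{E}(S)$ is totally ordered by $\le_S$. Conversely, if $\mathcal{E}(S)$ is a chain then $\operatorname{U}(\mathcal{E}(S)) = \mathcal{E}(S)$, so by Theorem~\ref{thm:ces:betti} $\operatorname{U}(\operatorname{Betti}(S)) = \mathcal{E}(S)$; but $\operatorname{U}(\operatorname{Betti}(S)) \subseteq \operatorname{Betti}(S)$ always, and I also need the reverse inclusion $\operatorname{Betti}(S) \subseteq \operatorname{U}(\operatorname{Betti}(S))$. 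This is where a small amount of care is needed: I would use that $\mathcal{E}(S) = \operatorname{U}(\operatorname{Betti}(S))$ is an order ideal-like subset—more precisely, every element of $\operatorname{Betti}(S)$ has its down-set (within $\operatorname{Betti}(S)$) contained in $\operatorname{Minimals}_{\le_S}\operatorname{Betti}(S) \cup \cdots$, and since $\operatorname{Minimals}_{\le_S}\operatorname{Betti}(S) = \operatorname{Minimals}_{\le_S}\mathcal{E}(S)$ sits inside the chain $\mathcal{E}(S)$, one shows every Betti element is comparable to all others. The technically smoothest formulation: $\operatorname{Betti}(S)$ is totally ordered iff $\operatorname{U}(\operatorname{Betti}(S)) = \operatorname{Betti}(S)$, and the containment $\operatorname{U}(\operatorname{Betti}(S)) = \operatorname{U}(\mathcal{E}(S))$ together with $\mathcal{E}(S)$ being a chain gives $\operatorname{U}(\mathcal{E}(S)) = \mathcal{E}(S)$, so $|\operatorname{U}(\operatorname{Betti}(S))| = |\mathcal{E}(S)| \ge |\operatorname{Minimals}_{\le_S}\mathcal{E}(S)| = |\operatorname{Minimals}_{\le_S}\operatorname{Betti}(S)|$; I then need to promote this to $\operatorname{U}(\operatorname{Betti}(S)) = \operatorname{Betti}(S)$, which follows because if some $b \in \operatorname{Betti}(S) \setminus \operatorname{U}(\operatorname{Betti}(S))$ existed, its down-set would not be a chain, yet it lies above some minimal element, and one can climb from there inside $\mathcal{E}(S)$ to reach a contradiction with $\mathcal{E}(S)$ being a chain.

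For part~b), the divisibility order $\mid$ on $\mathbb{N}$ refines $\le_S$ on $S$ (if $a \mid b$ in $\mathbb{N}$ with $a,b \in S$ then $b - a \in S$, since $b-a$ is a non-negative multiple of $a$), so being totally ordered by $\mid$ is stronger than being totally ordered by $\le_S$. I would show $\operatorname{Betti}(S)$ is totally ordered by $\mid$ iff $\mathcal{E}(S)$ is, by combining part~a)'s machinery with the extra input that on the set $\operatorname{U}(\operatorname{Betti}(S)) = \operatorname{U}(\mathcal{E}(S))$ the two relations $\le_S$ and $\mid$ interact well enough; the honest way is: $S$ Betti-divisible $\Rightarrow$ $S$ Betti-sorted $\Rightarrow$ (by a)) $\mathcal{E}(S)$ is a $\le_S$-chain $=\operatorname{Betti}(S)$ as sets? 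No—$\mathcal{E}(S)$ and $\operatorname{Betti}(S)$ need not coincide as sets in general, only their $\operatorname{U}$'s do. But when either is a chain, both equal their common $\operatorname{U}$, hence $\mathcal{E}(S) = \operatorname{U}(\mathcal{E}(S)) = \operatorname{U}(\operatorname{Betti}(S)) = \operatorname{Betti}(S)$ as \emph{sets}, and then divisibility-totality transfers trivially. So the real content is: once we know one of them is a $\le_S$-chain, the sets $\mathcal{E}(S)$ and $\operatorname{Betti}(S)$ are equal, and then a), b), c) all reduce to statements about a single set. Part~c) is then immediate: $S$ has a unique Betti element iff $|\operatorname{Betti}(S)| = 1$; a singleton is trivially a chain so $\mathcal{E}(S) = \operatorname{Betti}(S)$ by the above, giving $|\mathcal{E}(S)| = 1$; conversely $|\mathcal{E}(S)| = 1$ makes $\mathcal{E}(S)$ a chain, forcing equality of the two sets and hence $|\operatorname{Betti}(S)| = 1$.

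The main obstacle I anticipate is the bookkeeping around the difference between $\mathcal{E}(S)$ and $\operatorname{Betti}(S)$ as sets: Theorem~\ref{thm:ces:betti} only equates their $\operatorname{U}$-subsets, so I must be vigilant that the "converse" directions genuinely close up. The pivotal auxiliary claim, which I would isolate and prove first, is: \emph{if either $\mathcal{E}(S)$ or $\operatorname{Betti}(S)$ is totally ordered by $\le_S$, then $\mathcal{E}(S) = \operatorname{Betti}(S)$.} Granting this, everything else is routine poset manipulation together with the remark that $\mid$ refines $\le_S$ on $S$. Proving this auxiliary claim requires knowing that $\operatorname{U}(P) = P$ characterizes chains and that $\operatorname{Minimals}_{\le_S}$ of the two sets agree (from Theorem~\ref{thm:ces:betti} and \eqref{eq:U-minimals}); I would also invoke the finiteness of down-sets in $(S,\le_S)$ to run an induction "from the minimals upward" showing no element of $\operatorname{Betti}(S)$ can fail to be in $\operatorname{U}(\operatorname{Betti}(S))$ once $\mathcal{E}(S)$ is a chain, and symmetrically.
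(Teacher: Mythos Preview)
Your overall architecture matches the paper's: isolate a poset lemma, apply it together with Theorem~\ref{thm:ces:betti} to get part~a), and then reduce b) and c) to a) via the set equality $\mathcal{E}(S)=\operatorname{Betti}(S)$ that holds whenever either side is a $\le_S$-chain. That reduction, and the observation that divisibility refines $\le_S$ on positive elements of $S$, are exactly how the paper handles b) and c).

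However, the poset lemma you state is \emph{false}. You claim ``$X$ is totally ordered precisely when $\operatorname{U}(X)=X$.'' Take $X$ to be any antichain with at least two elements: for each $x\in X$ the down-set $\downarrow\! x=\{x\}$ is trivially a chain, so $\operatorname{U}(X)=X$, yet $X$ is not totally ordered. This wrong equivalence contaminates your forward argument for a): from ``$\operatorname{Betti}(S)$ is a chain'' you correctly get $\operatorname{U}(\mathcal{E}(S))=\operatorname{U}(\operatorname{Betti}(S))=\operatorname{Betti}(S)$, but you do \emph{not} yet know $\operatorname{U}(\mathcal{E}(S))=\mathcal{E}(S)$, so your lemma cannot be applied; what you actually have is that $\operatorname{U}(\mathcal{E}(S))$ is a chain.

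The correct lemma, and the one the paper proves (Lemma~\ref{lem:ordered-sets}), is: \emph{$X$ is totally ordered if and only if $\operatorname{U}(X)$ is totally ordered}. The nontrivial direction is exactly your ``climb from the minimals'' sketch at the end: if $\operatorname{U}(X)$ is a chain and $X\neq\operatorname{U}(X)$, pick $\alpha$ minimal in $X\setminus\operatorname{U}(X)$ (finiteness of down-sets guarantees this); then $\downarrow\!\alpha\setminus\{\alpha\}\subseteq\operatorname{U}(X)$ is a chain, hence so is $\downarrow\!\alpha$, contradicting $\alpha\notin\operatorname{U}(X)$. With this corrected lemma the whole argument becomes a one-liner: $\operatorname{Betti}(S)$ is a chain $\iff$ $\operatorname{U}(\operatorname{Betti}(S))$ is a chain $\iff$ $\operatorname{U}(\mathcal{E}(S))$ is a chain $\iff$ $\mathcal{E}(S)$ is a chain. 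So your proposal is salvageable, but only after replacing the faulty equivalence with the correct one; once that is done, your proof and the paper's coincide.
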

Our work also has some consequences for cyclotomic numerical semigroups. 
\begin{definition}
\label{def:CNS}
A cyclotomic numerical semigroup is a numerical semigroup whose  cyclotomic exponent sequence $\mathbf{e}$ has finite support; that is, there exists $N\in\mathbb N$ such that $e_j=0$ for every $j\ge N$. 
\end{definition}
These semigroups  were introduced and studied in \cite{cyclotomic} using a different, but equivalent, definition (see  Section~\ref{sec:cyclotomicbabbling}). It turns out that every \textit{complete intersection numerical semigroup} is cyclotomic, the former being a numerical semigroup such that the cardinality of its  minimal presentation equals its embedding dimension minus one (see Section~\ref{sec:pre:presentations} for a brief recap on the concept of minimal presentations). In \cite{cyclotomic} the authors made the following conjecture,  which they checked to be true for numerical semigroups with Frobenius number not exceeding $70,$  using the \texttt{GAP} package \texttt{numericalsgps} \cite{numericalsgps,gap}. 
\begin{conjecture}[{\cite[Conjecture 1]{cyclotomic}}]
  \label{con:cyclo-ci}
  A numerical semigroup is a complete intersection if and only if it is cyclotomic.
\end{conjecture}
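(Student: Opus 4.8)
We outline the line of attack we would pursue. The implication ``complete intersection $\Rightarrow$ cyclotomic'' is already known and has a short inductive proof, which we recall for orientation. If $S\ne\mathbb N$ is a complete intersection, then by the structure theorem for complete intersections it is a gluing of two complete intersections $S_1,S_2$ of strictly smaller embedding dimension, and under any gluing the Hilbert series is multiplicative: $\operatorname{H}_S(x)=\operatorname{H}_{S_1}(x^{a})\operatorname{H}_{S_2}(x^{b})(1-x^{c})$ for suitable positive integers $a,b$ and gluing degree $c$. Hence
\[
\operatorname{P}_S(x)=\frac{(1-x)(1-x^{c})}{(1-x^{a})(1-x^{b})}\operatorname{P}_{S_1}(x^{a})\operatorname{P}_{S_2}(x^{b}).
\]
Each factor $\Phi_n(x^{a})$ splits into a product of cyclotomic polynomials, so an induction on the embedding dimension (base case $\operatorname{P}_{\mathbb N}=1$) gives that $\operatorname{P}_S$ is a product of cyclotomic polynomials, i.e.\ $\mathbf e$ is finitely supported.

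For the converse, ``cyclotomic $\Rightarrow$ complete intersection'', the plan is to reduce the statement to a single numerical identity. Recall that $S$ is a complete intersection exactly when a minimal presentation has cardinality $\mathrm{e}(S)-1$, and that this cardinality equals $\sum_{b\in\operatorname{Betti}(S)}\bigl(r(b)-1\bigr)$, where $r(b)$ is the number of $R$-classes of $b$ (equivalently, the number of connected components of $\nabla_b$). On the other hand, grouping the factors of $\operatorname{P}_S(x)=\prod_{j\ge1}(1-x^j)^{e_j}$ according to Theorem~\ref{thm:ces:generators} yields
\[
\operatorname{P}_S(x)=(1-x)\prod_{n\ \text{minimal generator}}(1-x^{n})^{-1}\prod_{d\in\mathcal E(S)}(1-x^{d})^{e_d},
\]
which is a \emph{finite} product precisely when $S$ is cyclotomic. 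Comparing the order of vanishing of both sides at $x=1$ and using $\operatorname{P}_S(1)=1$ then gives $\sum_{d\in\mathcal E(S)}e_d=\mathrm{e}(S)-1$. Consequently it would be enough to establish, for cyclotomic $S$, the two facts: (i) $\mathcal E(S)=\operatorname{Betti}(S)$, and (ii) $e_b=r(b)-1$ for every $b\in\operatorname{Betti}(S)$; combining these with the identity above produces $\sum_{b\in\operatorname{Betti}(S)}\bigl(r(b)-1\bigr)=\mathrm{e}(S)-1$, that is, $S$ is a complete intersection. Along the way one also recovers the known fact that cyclotomic numerical semigroups are symmetric: $\operatorname{P}_S(1)=1$ forbids the factor $\Phi_1$, so $\operatorname{P}_S$ is a product of self-reciprocal cyclotomic polynomials; what the conjecture asks is to upgrade ``symmetric'' to ``complete intersection''.

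The crux, and the step we expect to be the main obstacle, is precisely (i) and (ii): together they amount to dropping the hypothesis $b\in\operatorname{U}(\operatorname{Betti}(S))$ from Theorem~\ref{thm:ces:betti} and strengthening Theorem~\ref{thm:ces:generators}\,d) to the claim that, when $S$ is cyclotomic, $e_d=0$ at every non-Betti, non-generator element $d\ge2$. Theorem~\ref{thm:ces:betti} handles the case in which $\downarrow\! b$ is totally ordered, via the description of $\nabla_b$ in Theorem~\ref{thm:brf:sorted}; the obstacle is that when $\downarrow\! b$ is not totally ordered the connected-component structure of the factorization graphs $\nabla_d$ (for $d\le_S b$, whether or not $d$ is a Betti element) is much harder to control, while each $e_d$ is extracted from the coefficients of $\log\operatorname{P}_S$ through a Witt-type expansion (Lemma~\ref{lem:Wittexpansionproof}), so that both the vanishing of $e_d$ away from $\operatorname{Betti}(S)$ and the equality $e_b=r(b)-1$ hide delicate cancellations. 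That the cyclotomic hypothesis is genuinely needed in (i)--(ii) can be seen already for $S=\langle3,4,5\rangle$, which is not cyclotomic: a direct computation gives $e_{13}=-1$, even though $13$ is neither a minimal generator nor a Betti element, so that $\mathcal E(S)\ne\operatorname{Betti}(S)$ for this $S$. An equivalent route would be the contrapositive: if $S$ is not a complete intersection, then $\sum_{b\in\operatorname{Betti}(S)}(r(b)-1)>\mathrm{e}(S)-1$, and one would try to show that this surplus propagates, forcing $e_d\ne0$ for infinitely many $d$; but locating the elements that carry those extra exponents and ruling out cancellation among them is, again, where any genuine proof of the conjecture will have to do the real work.
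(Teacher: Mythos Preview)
The statement is a \emph{conjecture}; the paper does not prove it and explicitly leaves it open. So there is no paper proof to compare against, and what you have written is, as you yourself say, a strategy whose crux---your (i) and (ii)---remains unproved.

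That strategy is essentially the one the paper lays out in its final section. The paper isolates two sub-conjectures, Conjecture~\ref{con:msg} (that $e_n<0$ precisely at the minimal generators) and Conjecture~\ref{con:betti} (your (ii), that $e_b=\operatorname{nc}(\nabla_b)-1$ for every Betti element), and shows in Proposition~\ref{prop:conjectures} that together they are equivalent to Conjecture~\ref{con:cyclo-ci}. Your easy direction (complete intersection $\Rightarrow$ cyclotomic) is the gluing argument encoded in~\eqref{eq:gluing:pol} and~\eqref{eq:ci:hilbert}, and your identity $\sum_{d\in\mathcal E(S)}e_d=\mathrm e(S)-1$ is Proposition~\ref{prop:sum-e} combined with Theorem~\ref{thm:ces:generators}.

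One small difference in packaging: your (i), the full equality $\mathcal E(S)=\operatorname{Betti}(S)$, is formally stronger than the paper's Conjecture~\ref{con:msg}, which only controls the \emph{sign} of the non-generator exponents. With that sign information alone, the paper argues via the inequality
\[
\mathrm e(S)=\sum_{e_d>0}e_d\ \ge\ 1+\sum_{b\in\operatorname{Betti}(S)}(\operatorname{nc}(\nabla_b)-1),
\]
which, combined with the universal lower bound $\mathrm e(S)-1$ on the size of a minimal presentation, forces equality. Your route reaches the same endpoint by asserting a direct equality. The two packages are ultimately equivalent (once the conjecture holds, $S$ is a complete intersection and then $\mathcal E(S)=\operatorname{Betti}(S)$ by~\eqref{eq:ci:hilbert}), but the paper's formulation has the advantage of isolating a first step---Conjecture~\ref{con:msg}---that asks nothing about \emph{which} elements of $\mathcal E(S)$ are Betti, only that none of them carries a negative exponent.
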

A version of this conjecture has been established for certain graded algebras, see \cite{cyclo-algebras}, but the numerical semigroup version remains open. Conjecture \ref{con:cyclo-ci} is equivalent with saying that \emph{a numerical semigroup $S$ is a complete intersection if and only if its cyclotomic exponent sequence $\bf{e}$ has finite support}. This establishes an equivalence between an algebraic property of a numerical semigroup and one that only involves its cyclotomic exponent sequence. Note that $\bf{e}$ has finite support if and only if $\mathcal{E}(S)$ is finite. Recall that Theorem~\ref{thm:ces:char} deals with the case where $\mathcal{E}(S)$ is a singleton.

As a consequence of our results, we make further progress towards proving Conjecture~\ref{con:cyclo-ci} by showing that all members of a certain family of cyclotomic numerical semigroups are complete intersections. More precisely, if the Hasse diagrams of $\operatorname{Betti}(S)$ and $\mathcal{E}(S)$ with respect to $\le_S$ are forests, that is, $\operatorname{U}(\operatorname{Betti}(S)) = \operatorname{Betti}(S)$ and $\operatorname{U}(\mathcal{E}(S)) = \mathcal{E}(S)$, then we are able to deduce that $S$ is a complete intersection (Corollary~\ref{cor:cyclo:forest:3}).  Computations suggest that such forests arise very frequently; for instance, there are 197 complete intersection numerical semigroups with Frobenius number 101 (equivalently, with genus equal to $52$), and for 170 of them the Hasse diagram of their set of Betti elements with respect to $\le_S$ is a forest. 
Here we should mention that, for any complete intersection numerical semigroup $S$, we have $\operatorname{Betti}(S) = \mathcal{E}(S)$, as explained in Section~\ref{sec:pre:ci}.


The paper is organized as follows. In Section~\ref{sec:pre} we gather some preliminary material used in the rest of the paper. In Section~\ref{sec:ces} we introduce cyclotomic exponent sequences and establish some elementary properties. In Section~\ref{sec:thm1} we prove Theorem~\ref{thm:ces:generators}. In Section~\ref{sec:thm2} we give the proof of Theorem~\ref{thm:ces:betti}, which comes in two parts, and we discuss a few tools needed for this purpose, such as minimal Betti elements and restricted factorizations (as this section is the longest, we kindly ask in advance for the reader's patience).  In Section~\ref{sec:thm3} we give the proof of Theorem~\ref{thm:ces:char}, while Section~\ref{sec:cns} is dedicated to applications to cyclotomic numerical semigroups, open questions, and concluding remarks.  

\section{Preliminaries} \label{sec:pre}
Here we recall a few properties and notions that are needed throughout the paper. References in the subsection headers
give suggestions for further reading.

Section~\ref{sec:cyclotomicbabbling} is exceptional in 
that it is not
needed for the rest of the paper.
Its purpose is to show that the original definition of
a cyclotomic numerical semigroup $S$, 
given in
 \cite{cyclotomic} through saying that $\P_S$ admits a factorization
into cyclotomic polynomials as in~\eqref{produkt},   
is equivalent
with the definition used here.

\subsection{Cyclotomic numerical semigroups and cyclotomic polynomials \cite{cyclotomic, cyclo:intro}}
\label{sec:cyclotomicbabbling}
The semigroup polynomial and the Frobenius number of a numerical semigroup
of embedding dimension two can be
easily determined (see, for instance, \cite{ns:cyclo-bernoulli}).
\begin{lemma}[{\cite[Theorem 1]{ns:cyclo-bernoulli}}]
\label{embedis2}
If $2\le a<b$ are coprime integers, then
\[ 
\operatorname{P}_{\langle a,b\rangle}(x)=\frac{(1-x)(1-x^{ab})}{
(1-x^a)(1-x^b)}. 
\]
\end{lemma}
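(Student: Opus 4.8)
The plan is to establish the equivalent identity at the level of Hilbert series, namely
\[
\operatorname{H}_{\langle a,b\rangle}(x)=\frac{1-x^{ab}}{(1-x^a)(1-x^b)},
\]
and then multiply both sides by $(1-x)$, which yields the claim because $\operatorname{P}_S(x)=(1-x)\operatorname{H}_S(x)$ by definition (and $(1-x)$ is invertible in $\mathbb Z[\![x]\!]$, so the two identities are genuinely equivalent).

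First I would rewrite the right-hand side transparently. Since $\frac{1-x^{ab}}{1-x^a}=1+x^a+\cdots+x^{(b-1)a}=\sum_{i=0}^{b-1}x^{ai}$ and $(1-x^b)^{-1}=\sum_{j\ge 0}x^{bj}$ in $\mathbb Z[\![x]\!]$, the right-hand side becomes
\[
\left(\sum_{i=0}^{b-1}x^{ai}\right)\left(\sum_{j\ge 0}x^{bj}\right)=\sum_{i=0}^{b-1}\sum_{j\ge 0}x^{ai+bj}.
\]
So it suffices to prove that the map $(i,j)\mapsto ai+bj$ from $\{0,1,\dots,b-1\}\times\mathbb N$ to $\mathbb N$ is injective and has image exactly $S=\langle a,b\rangle$; this identifies the displayed series with $\sum_{s\in S}x^s=\operatorname{H}_S(x)$ term by term.

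For injectivity, if $ai+bj=ai'+bj'$ with $0\le i,i'\le b-1$, then $a(i-i')=b(j'-j)$, and since $\gcd(a,b)=1$ we get $b\mid i-i'$; as $|i-i'|<b$ this forces $i=i'$ and hence $j=j'$. For the image, each value $ai+bj$ clearly lies in $S$; conversely, given $s\in S$ write $s=a\alpha+b\beta$ with $\alpha,\beta\in\mathbb N$, perform Euclidean division $\alpha=qb+i$ with $0\le i\le b-1$, and observe that $s=ai+b(aq+\beta)$ with $aq+\beta\in\mathbb N$. This gives surjectivity onto $S$ and completes the proof.

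There is no real obstacle here: the computation is routine, and the only place where the hypothesis is used is the injectivity step, where $\gcd(a,b)=1$ is precisely what is needed; surjectivity is a one-line application of division with remainder. (One could alternatively deduce the formula from Sylvester's classical values $\operatorname{F}(\langle a,b\rangle)=ab-a-b$ and $\g(\langle a,b\rangle)=(a-1)(b-1)/2$ together with \eqref{eq:psx}, but the bijection argument above is cleaner and self-contained.)
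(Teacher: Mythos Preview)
Your proof is correct. The paper does not actually prove this lemma; it merely quotes it from \cite{ns:cyclo-bernoulli} and moves on, so there is no ``paper's own proof'' to compare against. Your argument is the standard one: what you have computed is precisely that $\operatorname{Ap}(\langle a,b\rangle;b)=\{0,a,2a,\dots,(b-1)a\}$, which via \eqref{eq:pol:apery} gives $(1-x^b)\operatorname{H}_{\langle a,b\rangle}(x)=\sum_{i=0}^{b-1}x^{ai}=(1-x^{ab})/(1-x^a)$ and hence the claim. The bijection you set up is exactly this Ap\'ery decomposition in disguise, and the injectivity/surjectivity steps are both fine.
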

\begin{corollary}[Sylvester, 1884]
\label{sylvester}
If $2\le a<b$ are coprime integers, then $\operatorname{F}(\langle a,b\rangle)=ab-a-b$.
\end{corollary}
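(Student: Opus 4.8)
The plan is to read the Frobenius number directly off the closed form for the semigroup polynomial supplied by Lemma~\ref{embedis2}. Recall from the discussion following the definition of $\operatorname{P}_S$ that, for any numerical semigroup $S$, the polynomial $\operatorname{P}_S(x)$ is monic of degree $\operatorname{F}(S)+1$. Hence it suffices to determine the degree of $\operatorname{P}_{\langle a,b\rangle}(x)$ and then solve for $\operatorname{F}(\langle a,b\rangle)$.

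First I would check that $\langle a,b\rangle$ really is a numerical semigroup, so that the machinery applies: since $\gcd(a,b)=1$, every sufficiently large integer is a non-negative integer combination of $a$ and $b$, so $\langle a,b\rangle$ has finite complement in $\mathbb N$. Consequently Lemma~\ref{embedis2} gives
\[
\operatorname{P}_{\langle a,b\rangle}(x)=\frac{(1-x)(1-x^{ab})}{(1-x^a)(1-x^b)},
\]
and in particular the rational expression on the right is an honest polynomial, being equal to $\operatorname{P}_{\langle a,b\rangle}$.

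Next I would compute the degree of this expression. The numerator $(1-x)(1-x^{ab})$ has degree $1+ab$ and the denominator $(1-x^a)(1-x^b)$ has degree $a+b$; since the quotient is a polynomial, its degree is $1+ab-(a+b)=ab-a-b+1$. (If one wants to be fully explicit, compare leading coefficients, or take the limit of the quotient divided by $x^{ab-a-b+1}$ as $x\to\infty$.) Equating this with $\operatorname{F}(\langle a,b\rangle)+1$ yields $\operatorname{F}(\langle a,b\rangle)+1=ab-a-b+1$, that is, $\operatorname{F}(\langle a,b\rangle)=ab-a-b$.

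There is no real obstacle here: the proof is a one-line degree count once Lemma~\ref{embedis2} is in hand. The only points requiring a word of care are that the rational expression in Lemma~\ref{embedis2} is genuinely a polynomial — immediate, since it coincides with $\operatorname{P}_{\langle a,b\rangle}$ — and the standing fact that $\operatorname{P}_S$ is monic of degree $\operatorname{F}(S)+1$, which was recorded right after the semigroup polynomial was introduced. (For completeness one could also mention the classical elementary argument that $ab-a-b$ itself is not representable while every larger integer is, but deducing it from Lemma~\ref{embedis2} is the route consistent with the development here.)
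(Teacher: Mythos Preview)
Your proof is correct and is precisely the argument the paper intends: the corollary is stated without proof immediately after Lemma~\ref{embedis2}, and the implied deduction is exactly the degree count you carry out, using the fact recorded in the introduction that $\operatorname{P}_S$ is monic of degree $\operatorname{F}(S)+1$.
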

Lemma~\ref{embedis2} shows that $S = \langle a,b\rangle$ is a 
cyclotomic numerical semigroup, since its exponent sequence has finite
support.
The factorization of $\operatorname{P}_{\langle a,b\rangle}$ 
into irreducibles is easily found 
by using the well-known factorization
\begin{equation}
  \label{eq:cyclo-rec}
  x^n - 1 = \prod_{d|n} \Phi_d(x)
\end{equation}
of $x^n-1$ into cyclotomic polynomials 
(all of them irreducible
over $\mathbb Q$).
In the special case where $a$ and $b$ are prime numbers,
we find that $\operatorname{P}_{\langle a,b\rangle}(x)=\Phi_{ab}(x)$, which
then gives a very natural proof of the classical fact that
the coefficients of $\Phi_{ab}(x)$ are all in $\{-1,0,1\}$
and that consecutive non-zero coefficients alternate in sign.
\par The following two results describe some basic properties of the cyclotomic exponent sequence attached to a cyclotomic numerical semigroup.
\begin{proposition} \label{prop:sum-e}
Let $S$ be a numerical semigroup and let $\mathbf{e}$ be its cyclotomic exponent sequence. If $S$ is cyclotomic, then 
$\sum_{j \ge 1} e_j  = 0$.
\end{proposition}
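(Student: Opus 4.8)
The plan is to use the fact that the cyclotomic hypothesis collapses the infinite product in~\eqref{eq:cyc-exp} into a \emph{finite} one, so that $\operatorname{P}_S(x)$ and $\prod_j(1-x^j)^{e_j}$ may legitimately be compared as rational functions (indeed as honest functions of a complex variable), and then to read off $\sigma:=\sum_{j\ge1}e_j$ from the behaviour of both sides at $x=1$, where we already know from~\eqref{eq:psx} that $\operatorname{P}_S(1)=1$.

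Concretely, I would proceed in three steps. First, fix $N$ with $e_j=0$ for $j\ge N$, so that \eqref{eq:cyc-exp} becomes $\operatorname{P}_S(x)=\prod_{j=1}^{N-1}(1-x^j)^{e_j}$; since each factor $(1-x^j)^{\pm1}$ is a unit of $\mathbb Z[\![x]\!]$ (its constant term is $1$) that is at the same time a rational function regular at the origin, this formal power series identity also holds in the field $\mathbb Q(x)$. Second, write $1-x^j=(1-x)\,\varphi_j(x)$ with $\varphi_j(x)=1+x+\dots+x^{j-1}$, obtaining
\[
\operatorname{P}_S(x)=(1-x)^{\sigma}\prod_{j=1}^{N-1}\varphi_j(x)^{e_j},\qquad \sigma=\sum_{j\ge1}e_j.
\]
Third, evaluate the order of vanishing at $x=1$: letting $v$ be the discrete valuation of $\mathbb Q(x)$ attached to the prime $(1-x)$, we have $v(\operatorname{P}_S)=0$ because $\operatorname{P}_S$ is a polynomial with $\operatorname{P}_S(1)=1\ne0$, while $x=1$ is a simple root of each $1-x^j$ (its derivative there is $-j\ne0$) so $v(1-x^j)=1$, and $\varphi_j(1)=j\ne0$ gives $v(\varphi_j)=0$. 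Applying $v$ to the displayed identity yields $0=\sigma$, as claimed. Equivalently, and avoiding valuations: as $x\to1$ along the reals, the right-hand side of $\operatorname{P}_S(x)(1-x)^{-\sigma}=\prod_j\varphi_j(x)^{e_j}$ tends to the nonzero number $\prod_j j^{e_j}$, whereas the left-hand side tends to $0$, $1$, or $\infty$ according as $\sigma$ is negative, zero, or positive, forcing $\sigma=0$.

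I do not expect a genuine obstacle here. The one point requiring a word of care is the passage from the formal power series identity~\eqref{eq:cyc-exp} to an identity of rational functions (and hence to a statement about behaviour at $x=1$); this is exactly where the cyclotomic hypothesis is indispensable, since for a general numerical semigroup the product is infinite and the manipulation above would be meaningless. Everything else amounts to a single evaluation at $x=1$, using only $\operatorname{P}_S(1)=1$.
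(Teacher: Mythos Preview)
Your proof is correct and follows essentially the same approach as the paper: factor each $1-x^j$ as $(1-x)\varphi_j(x)$ to write $\operatorname{P}_S(x)=(1-x)^{\sigma}G_S(x)$ with $G_S(1)=\prod_j j^{e_j}\neq 0$, then invoke $\operatorname{P}_S(1)=1$ to force $\sigma=0$. The paper's version is terser, but your added remarks on passing from the formal identity to an identity of rational functions make explicit precisely where the cyclotomic hypothesis is used.
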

\begin{proof}
Let $N$ be the largest index $j$ such that $e_j\ne 0.$ Then we have
\[ \operatorname{P}_S(x)= (1-x)^{\sum_{j\le N}e_j}G_S(x),\] 
for some rational function $G_S(x)$ satisfying $G_S(1)\not \in \{ 0, \infty \}$
(in fact $G_S(1)=\prod_{j\le N}j^{e_j}$). Since $\operatorname{P}_S(1)=1$, it follows that $\sum_{j\ge 1}e_j=0$. 
\end{proof}

\begin{proposition}\label{prop:cyclotomic}
Let $S$ be a numerical semigroup. Then $S$ is cyclotomic if and only if $\operatorname{P}_S(x)$ factorizes in the form 
\begin{equation}
\label{produkt}
\operatorname{P}_S(x)=\prod_{d\in \mathcal{D}}\Phi_d^{h_d},
\end{equation} 
where
$\mathcal{D}$ is a finite set and $h_d$ are positive integers.
\end{proposition}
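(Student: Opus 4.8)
The plan is to deduce both implications from the classical factorization~\eqref{eq:cyclo-rec} and its Möbius inversion, combined with the uniqueness of the cyclotomic exponent sequence (the statement recorded right after~\eqref{eq:cyc-exp}; see also Lemma~\ref{lem:Wittexpansionproof}). I will use throughout the two elementary consequences of~\eqref{eq:cyclo-rec}:
\begin{equation*}
1 - x^j = (1-x)\prod_{d \mid j,\, d > 1} \Phi_d(x)\quad (j \ge 1),
\qquad
\Phi_d(x) = \prod_{e \mid d} (1 - x^e)^{\mu(d/e)}\quad (d \ge 2),
\end{equation*}
where in the second identity the would-be sign $(-1)^{\sum_{e\mid d}\mu(d/e)}$ is trivial because $\sum_{e\mid d}\mu(d/e)=0$ for $d\ge 2$.

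\emph{Proof of ``$\Rightarrow$''.} Suppose $\mathbf{e}$ has finite support, say $e_j = 0$ for $j > N$. Then $\operatorname{P}_S(x) = \prod_{j=1}^{N}(1 - x^j)^{e_j}$, an identity of rational functions (clearing denominators turns it into an identity of polynomials, valid since it holds in $\Z[\![x]\!]$). Substituting the first identity above and collecting, for each $d$, the total exponent of $\Phi_d$ as well as the global power of $(1-x)$, I get
\[
\operatorname{P}_S(x) = (1-x)^{m_1}\prod_{d=2}^{N}\Phi_d(x)^{m_d}, \qquad m_d := \sum_{\substack{1 \le j \le N\\ d \mid j}} e_j .
\]
The polynomials $1-x,\Phi_2,\dots,\Phi_N$ are pairwise non-associate irreducibles of the unique factorization domain $\mathbb{Q}[x]$ and $\operatorname{P}_S$ is a polynomial, so unique factorization forces $m_d \ge 0$ for every $d$. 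Evaluating at $x=1$ and using $\operatorname{P}_S(1)=1\ne 0 = 1-1$ then forces $m_1=0$. Hence $\operatorname{P}_S(x) = \prod_{d \in \mathcal{D}}\Phi_d(x)^{h_d}$ with $\mathcal{D} = \{d : 2 \le d \le N,\ m_d > 0\}$ finite and $h_d = m_d$ positive, which is~\eqref{produkt}.

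\emph{Proof of ``$\Leftarrow$''.} Assume $\operatorname{P}_S(x) = \prod_{d \in \mathcal{D}}\Phi_d(x)^{h_d}$ with $\mathcal{D}$ finite and each $h_d$ a positive integer. Since $\operatorname{P}_S(1)=1\ne 0$ while $\Phi_1(1)=0$, we have $1\notin\mathcal{D}$, so every $d\in\mathcal{D}$ satisfies $d\ge 2$ and the second identity above applies to each factor. Substituting it yields
\[
\operatorname{P}_S(x) = \prod_{d \in \mathcal{D}} \prod_{e \mid d}(1 - x^e)^{h_d\mu(d/e)} = \prod_{e \ge 1}(1 - x^e)^{f_e}, \qquad f_e := \sum_{\substack{d \in \mathcal{D}\\ e \mid d}} h_d\,\mu(d/e),
\]
and $f_e = 0$ whenever $e$ divides no element of the finite set $\mathcal{D}$, so $\{f_e\}_{e\ge 1}$ has finite support. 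By the uniqueness of the cyclotomic exponent sequence, $e_j = f_j$ for all $j$, hence $\mathbf{e}$ has finite support and $S$ is cyclotomic.

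The argument is short and essentially routine; the only point that needs attention is the sign bookkeeping around $\Phi_1(x)=x-1$ versus $1-x$, which is exactly why the evaluations at $x=1$ (forcing $m_1=0$ in one direction and $1\notin\mathcal{D}$ in the other) enter. I do not expect any serious obstacle beyond this.
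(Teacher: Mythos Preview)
Your proof is correct and follows essentially the same approach as the paper's. Both directions rely on the factorization~\eqref{eq:cyclo-rec} and its M\"obius inversion, together with $\operatorname{P}_S(1)=1$ to rule out $\Phi_1$; the only cosmetic difference is that in the forward direction the paper first invokes Proposition~\ref{prop:sum-e} to pass from $(1-x^j)^{e_j}$ to $(x^j-1)^{e_j}$ and then argues via poles, whereas you factor $(1-x^j)$ directly and use unique factorization in $\mathbb{Q}[x]$---but your conclusion $m_1=0$ is exactly the content of Proposition~\ref{prop:sum-e}, so the two arguments are equivalent.
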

\begin{proof} 
 Let $\mathbf{e}$ be the exponent sequence of $S$ and let $N$ be the largest index $j$ such that $e_j\ne 0.$ By Proposition~\ref{prop:sum-e} we have 
 $\sum_{1\le j\le N}e_j=0$ and so
\[\operatorname{P}_S(x)=\prod_{j=1}^N (1-x^j)^{e_j}=\prod_{j=1}^N 
(x^j-1)^{e_j}.\]
By~\eqref{eq:cyclo-rec} it then follows that 
$\operatorname{P}_S(x)$ can be written as in~\eqref{produkt}, where 
a priori some of the integers $h_d$ may be negative. As the
complex zeros of the cyclotomic polynomials are all different,
this would lead to $\operatorname{P}_S$ having a pole, contradicting the fact that $\operatorname{P}_S$ is 
a polynomial.

For the other direction, we 
use the M\"obius function $\mu(n)$, which is equal to zero for non-square free integers $n$ and to $(-1)^r$ otherwise, where $r$ is the number of prime factors in the prime decomposition of $n$. By applying M\"obius inversion to~\eqref{eq:cyclo-rec} one obtains
\begin{equation*}
  \Phi_n(x)=\prod_{d \mid n}(x^d-1)^{\mu(n/d)}.
\end{equation*}
Using the fact that $\sum_{d|n}\mu(d)=0$ for $n\ge 2$, this can 
be rewritten for $n\ge 2$ as 
\begin{equation*}
\Phi_n(x)=\prod_{d \mid n}(1-x^d)^{\mu(n/d)}.
\end{equation*}
Since $\operatorname{P}_S(1)\ne 0$, $\Phi_1(1)=0$,  and $\Phi_d(1)\ne 0$ 
for $d\ge 2$, we have $1\not\in \mathcal{D}$, and
so, using the latter identity,
it follows that there are integers $e_1, e_2, \ldots$ such that
\begin{equation*}
 \operatorname{P}_S(x) = \prod_{d = 1}^{\infty} (1 - x^d)^{e_d}, 
 \end{equation*}
where $e_d = 0$ for $d > \max \mathcal{D}$, which means that 
$\mathbf{e}$ has finite support.
\end{proof}

Recall that a polynomial $f(x)$ of degree $d$ is \emph{self-reciprocal} if $f(x)=x^df(1/x)$. The cyclotomic polynomial $\Phi_n$ is self-reciprocal for $n \ge 2$. As a consequence of this fact and Proposition~\ref{prop:cyclotomic}, it follows that if $S$ is cyclotomic, then $\operatorname{P}_S(x)$ is self-reciprocal. It is not difficult to show that a numerical semigroup is \emph{symmetric}  (that is, for every $n\in\mathbb Z$, either $n$ or $\operatorname{F}(S)-n$ is in $S$) if and only if $\operatorname{P}_S$ is self-reciprocal \cite{ns:cyclo-bernoulli}. Therefore, every cyclotomic numerical semigroup is symmetric.  The converse is generally not true; for instance, it can be shown that for every positive integer $e\ge 4$, there exists a numerical semigroup of embedding dimension $e$ that is symmetric but not cyclotomic \cite{cyclo:log-deriv,SaSt}.

Proposition~\ref{prop:cyclotomic} raises the
question whether one can classify cyclotomic numerical semigroups 
for which $\operatorname{P}_S$ decomposes into a small number of irreducible factors.
This and similar questions are addressed in \cite{cyclo-length}, where the authors 
show, for example, that $\operatorname{P}_S = \Phi_n$ if and only if $n = pq$ and $S = \langle p,q \rangle$ for 
distinct prime numbers $p$ and $q$.

\subsection{Ap\'ery sets \cite{ns}} 

Let $S$ be a numerical semigroup and $m\in \Z$. The set
\[
\Ap(S;m)=\{ s\in S : s-m\not\in S\}
\]
is called the \emph{Ap\'ery set} of $m$ in $S$.  
Given any arithmetic progression modulo $m$, the numbers in it that are large enough will be
in $S$, whereas the numbers that are small enough will not be in $S.$ Therefore, among them we will find
at least one element from $\Ap(S;m)$, and so 
$|\Ap(S;m)|\ge m$. 

In the remainder of this subsection we assume that $m\in S$, in which case
$S=\Ap(S;m)+m\N$ and $|\Ap(S;m)|=m$. It then follows that every integer $z$ can be 
uniquely written as $z=km+w$ with $k\in \mathbb{Z}$ and $w\in \Ap(S;m)$, 
and that $z\in S$ if and only if $k\ge 0$. We will use this fact several times.

From $S=\Ap(S;m)+m\N$ we infer that
$\operatorname{H}_S(x)= \sum_{w \in \Ap(S; m)} x^w \sum_{k=0}^{\infty}x^{km}$,
hence 
\begin{equation} \label{eq:pol:apery}
(1 - x^m) \operatorname{H}_S(x) = \sum_{w \in \Ap(S; m)} x^w,
\end{equation}
with the right-hand side being the \emph{Ap\'ery polynomial} of $m$ in $S$, see \cite{ns:apery-hilbert}. 

\subsection{Minimal presentations and Betti elements \cite{ns-app,ns}} \label{sec:pre:presentations}

Let $S$ be a numerical semigroup minimally generated by $\{n_1,\dots, n_e\}$. There is a natural epimorphism $\varphi \colon \mathbb{N}^e\to S$, defined as $\varphi(a_1,\ldots,a_e)=\sum_{i=1}^e a_in_i$. The set $\ker\varphi=\{(a,b)\in \mathbb{N}^e\times \mathbb{N}^e \colon \varphi(a)=\varphi(b)\}$ is a congruence, that is, an equivalence relation compatible with addition; hence, $S$ is isomorphic, as a monoid, to $\mathbb{N}^e/\ker\varphi$. A \emph{presentation} for $S$ is a system of generators of $\ker\varphi$ as a congruence. A presentation is \textit{minimal} if none of its proper subsets generates $\ker\varphi$. It can be shown that all minimal presentations of a numerical semigroup have the same (finite) cardinality (see, for instance, \cite[Chapter 7]{ns}). 

 Given $\rho\subseteq \mathbb{N}^e\times \mathbb{N}^e$, denote by $\operatorname{cong}(\rho)$ the congruence generated by $\rho$, that is, the intersection of all congruences containing $\rho$. Define $\rho^0 = \rho \cup \{(y,x) : (x,y) \in \rho\}$ and $\rho^1 = \{(x+u, y+u) : (x,y) \in \rho^0, u \in \mathbb{N}^e\}$. It turns out that $\operatorname{cong}(\rho)$ is the transitive closure of $\rho^1$.

A minimal presentation of $S$ can be constructed as follows. For $s\in S$, let $\operatorname{Z}(s)$ be the \emph{set of factorizations} of $s$ in $S$, that is, the fiber $\varphi^{-1}(s)$ (we use $\ZZ(S)$ to denote the set of all factorizations of elements in $S$, which equals $\mathbb{N}^{\operatorname{e}(S)}$). Define  $\nabla_s$ to be the graph with vertices $\operatorname{Z}(s)$ and with edges $xy$ so that $x\cdot y\neq 0$ (dot product; that is, edges join factorizations having minimal generators in common). The connected components of $\nabla_s$ are called the \emph{R-classes} of $s$. The element $s\in S$ is a \emph{Betti element} if $\nabla_s$ is not connected. We denote by $\operatorname{Betti}(S)$ the set of Betti elements of $S$, and by $\operatorname{nc}(\nabla_s)$ the number of connected components of $\nabla_s$.

Assume that $s\in\operatorname{Betti}(S)$ and let $C_1,\ldots, C_r$ be the connected components of $\nabla_s$ (thus $r=\operatorname{nc}(\nabla_s)$). Pick $x_i\in C_i$ for all $i\in\{1,\ldots,r\}$, and set $\rho^{(s)}=\{(x_1,x_2),(x_2,x_3),\ldots, (x_{r-1},x_r)\}$. Then $\rho=\bigcup_{s\in\operatorname{Betti}(S)} \rho^{(s)}$ is a minimal presentation of $S$. 
All minimal presentations can be constructed by using the following idea. Think of $(x_i,x_j)$ as a link connecting $C_i$ and $C_j$. Then you need all connected components to be connected with these links. The minimal possible choice is to have a spanning tree connecting them all, once the $x_i$ have been chosen. Different choices of $x_i$ in $C_i$ and different spanning trees will yield different minimal presentations, but they all have the same cardinality (see, for instance, \cite[Chapter 4]{ns-app}).
As a consequence, all minimal presentations have cardinality equal to $\sum_{s\in\operatorname{Betti}(S)}(\operatorname{nc}(\nabla_s)-1)$.

\subsection{Complete intersection numerical semigroups \cite{ns}} \label{sec:pre:ci}

Let $S$ be a numerical semigroup with embedding dimension $e$. It can be shown that the cardinality of any minimal presentation of $S$ 
has $e-1$ as a lower bound  (see, for instance, \cite[Chapter 8]{ns}), and numerical semigroups attaining this bound are called \emph{complete intersections}. 

Let $S_1$ and $S_2$ be two numerical semigroups, and $a_1$, $a_2$ be two coprime integers such that $a_1\in S_2$, $a_2\in S_1$ and neither $a_1$, nor $a_2$ is a minimal generator. The set $a_1S_1+a_2S_2$ is a numerical semigroup known as the \emph{gluing} of $S_1$ and $S_2$. We will write $S=a_1S_1+_{a_1a_2}a_2S_2$. 

A complete intersection numerical semigroup $S$ is either $\mathbb{N}$ or a gluing $a_1S_1+a_2S_2$ with both $S_1$ and $S_2$ complete intersection numerical semigroups (see \cite[Chapter 8]{ns}). It turns out that $\operatorname{Betti}(S)=\{a_1a_2\}\cup\{ a_1b_1\colon b_1\in \operatorname{Betti}(S_1)\}\cup \{a_2b_2\colon b_2\in \operatorname{Betti}(S_2)\}$, see \cite{affine:hilbert}.

It is well-known (see \cite{affine:hilbert}) that 
\begin{equation} \label{eq:gluing:hilbert}
  \operatorname{H}_{a_1 S_1 +_{a_1 a_2} a_2 S_2}(x) = (1 - x^{a_1 a_2}) \H_{S_1}(x^{a_1}) \H_{S_2}(x^{a_2}),
\end{equation}
which, in terms of semigroup polynomials, can be written as
\begin{equation} \label{eq:gluing:pol}
\operatorname{P}_{a_1 S_1 +_{a_1 a_2} a_2 S_2}(x) = \frac{(1-x)(1 - x^{a_1 a_2})}{(1-x^{a_1})(1-x^{a_2})} \operatorname{P}_{S_1}(x^{a_1}) \operatorname{P}_{S_2}(x^{a_2}).
\end{equation}
Consequently, a formula for $\operatorname{P}_S$ in terms of the minimal generators and Betti elements of $S$ can be given. If $S=n_1\mathbb{N}+_{b_1}n_2\mathbb{N}+\cdots+_{b_{e-1}}n_e\mathbb{N}$ (with $\{n_1,\ldots,n_e\}$ the minimal generating system of $S$ and with $b_i$ not necessarily distinct integers), then \cite[Theorem 4.8]{affine:hilbert} states that
\begin{equation}\label{hilbert-gluing}
\operatorname{H}_S(x)=\frac{\prod_{i=1}^{e-1}(1-x^{b_i})}{\prod_{i=1}^e(1-x^{n_i})}.
\end{equation}
If $S=a_1S_1+_{a_1a_2} a_2S_2$ is a gluing of $S_1$ and $S_2$, then every minimal presentation of $S$ comes from the union of a minimal presentation of $S_1$, a minimal presentation of $S_2$ and a pair of factorizations of $a_1a_2$, one in $a_1S_1$ and the other in $a_2S_2$; see, for instance, \cite[Chapter 8]{ns}. Thus, by~\eqref{eq:gluing:hilbert} and the fact that every minimal presentation of $S$ has cardinality \(\sum_{b\in\operatorname{Betti}(S)}(\operatorname{nc}(\nabla_b)-1)\), the multiplicity of $b_i$ in the numerator of~\eqref{hilbert-gluing} is precisely $\operatorname{nc}(\nabla_{b_i})-1$ and the above formula can be rewritten as 
\begin{equation} \label{eq:ci:hilbert}
\operatorname{H}_S(x)=\frac{\prod_{b\in\operatorname{Betti}(S)}(1-x^b)^{\operatorname{nc}(\nabla_b)-1}}
{\prod_{i=1}^e(1-x^{n_i})}.
\end{equation}
Indeed, this identity characterizes complete intersection numerical semigroups.
\begin{proposition} \label{prop:ci-char}
Let $S$ be a numerical semigroup. Then $S$ is a complete intersection numerical semigroup if and only if $\operatorname{H}_S$ satisfies 
\eqref{eq:ci:hilbert}.
\end{proposition}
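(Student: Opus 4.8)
The plan is to prove both implications of Proposition~\ref{prop:ci-char}. The forward direction is essentially already assembled in the text above: if $S$ is a complete intersection, then either $S=\mathbb N$ (in which case $\operatorname{Betti}(S)=\emptyset$ and both sides of~\eqref{eq:ci:hilbert} equal $(1-x)^{-1}$), or $S$ is an iterated gluing $S=n_1\mathbb N+_{b_1}\cdots+_{b_{e-1}}n_e\mathbb N$; then~\eqref{hilbert-gluing} holds, and the discussion of minimal presentations of gluings shows that the exponent of each Betti element $b$ in the numerator is exactly $\operatorname{nc}(\nabla_b)-1$, which is~\eqref{eq:ci:hilbert}. So for this direction I would simply cite~\eqref{hilbert-gluing}, \eqref{eq:gluing:hilbert}, and the fact that $|\rho|=\sum_{b\in\operatorname{Betti}(S)}(\operatorname{nc}(\nabla_b)-1)=e-1$.

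For the converse, suppose $\operatorname{H}_S$ satisfies~\eqref{eq:ci:hilbert}. Rewriting in terms of the semigroup polynomial, $\operatorname{P}_S(x)=(1-x)\operatorname{H}_S(x)$ has the form
\[
\operatorname{P}_S(x)=\frac{\prod_{b\in\operatorname{Betti}(S)}(1-x^b)^{\operatorname{nc}(\nabla_b)-1}}{\prod_{i=2}^e(1-x^{n_i})},
\]
after cancelling one factor $(1-x)$ against $(1-x^{n_1})$ — here I order the minimal generators so that $n_1=\operatorname{m}(S)$; actually, more cleanly, I would not cancel and instead keep $\operatorname{P}_S(x)\prod_{i=1}^e(1-x^{n_i})=(1-x)\prod_{b\in\operatorname{Betti}(S)}(1-x^b)^{\operatorname{nc}(\nabla_b)-1}$. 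Evaluating the order of vanishing at $x=1$ (or comparing with the factorization~\eqref{eq:cyc-exp} and $\operatorname{P}_S(1)=1$) forces $\sum_{b}(\operatorname{nc}(\nabla_b)-1)=e-1$. Since the left-hand side of this equality is precisely the cardinality of any minimal presentation of $S$, we conclude $|\rho|=e-1$, i.e.\ $S$ is a complete intersection.

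The step I expect to require the most care is extracting $\sum_b(\operatorname{nc}(\nabla_b)-1)=e-1$ from~\eqref{eq:ci:hilbert} rigorously. One must check that the rational function on the right of~\eqref{eq:ci:hilbert} is genuinely a power series with the claimed vanishing behaviour, so that comparing the multiplicity of the factor $(1-x)$ on both sides of $\operatorname{P}_S(x)\prod_{i=1}^e(1-x^{n_i})=(1-x)\prod_b(1-x^b)^{\operatorname{nc}(\nabla_b)-1}$ is legitimate; this uses $\operatorname{P}_S(0)=\operatorname{P}_S(1)=1$ together with the uniqueness statement~\eqref{eq:cyc-exp} (equivalently Lemma~\ref{lem:Wittexpansionproof}). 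An alternative, cleaner route for the converse is to invoke uniqueness of the cyclotomic-type expansion directly: write both $\operatorname{H}_S$ from~\eqref{eq:ci:hilbert} and the known $\operatorname{H}_S=(1-x)^{-1}\prod_{j\ge1}(1-x^j)^{e_j}$ as products of the form $\prod_j(1-x^j)^{c_j}$, match exponents, and read off that the total exponent sum is $e-1$. Either way, the heart of the matter is the bookkeeping of multiplicities; everything else is a direct appeal to the gluing structure theory recalled in Section~\ref{sec:pre:ci}.
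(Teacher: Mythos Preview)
Your proof is correct and takes essentially the same route as the paper: the forward direction is deferred to the gluing discussion preceding the proposition, and the converse compares the multiplicity of $(1-x)$ in $\operatorname{P}_S(x)=(1-x)\operatorname{H}_S(x)$ using $\operatorname{P}_S(1)=1$ to force $\sum_{b}(\operatorname{nc}(\nabla_b)-1)=e-1$. Your concern about rigor is unnecessary---since $\operatorname{P}_S$ is a polynomial with $\operatorname{P}_S(1)=1$, the orders of vanishing at $x=1$ of the numerator and denominator of its rational-function expression must coincide, and this is elementary; no appeal to Lemma~\ref{lem:Wittexpansionproof} is needed.
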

\begin{proof}
We prove that if $S$ verifies~\eqref{eq:ci:hilbert}, then $S$ is a complete intersection numerical semigroup; the other implication also holds, as we have 
just seen. Recall that $\operatorname{P}_S(x) = (1-x) \operatorname{H}_S(x)$ is a polynomial and that by~\eqref{eq:psx} we have $\operatorname{P}_S(1) = 1$. Thus the factors $1-x$ of the
numerator and denominator of $\mathrm P_S(x)$ must cancel each other out and we 
find $\sum_{b \in \operatorname{Betti}(S)}(\operatorname{nc}(\nabla_{b}) -1) = \operatorname{e}(S)-1$. Consequently, any minimal presentation of $S$ has cardinality $\operatorname{e}(S)-1$, which means that $S$ is a complete intersection.
\end{proof}

One of our aims is to prove that 
the Hilbert series of a cyclotomic numerical semigroup
always satisfies~\eqref{eq:ci:hilbert}. In this paper we 
do so for some particular 
classes of cyclotomic numerical semigroups.

\subsection{Other series and polynomials associated to numerical semigroups \cite{szekely}} \label{sec:pre:polynomials}
This subsection is dedicated to introducing a few other objects that arise naturally in connection to numerical semigroups. However, the only reults needed in the sequel are the upcoming definitions and the accompanying identity \eqref{eq:dseries}. The reader may therefore choose to omit the discussion on the polynomial $\mathcal{K}_S$, which we make here for sake of completeness, and directly skip 
to Section~\ref{sec:pre:isolated}.
\par Let $S$ be a numerical semigroup minimally generated by a set $A$. The \emph{denumerant} of $s \in S$, denoted by $\mathfrak{d}(s)$, is the cardinality of $\operatorname{Z}(s)$, the set of factorizations of $s$ in $S$. We can consider the \emph{denumerant series} $\sum_{s \in S} \mathfrak{d}(s) x^s$, which verifies the equality  
\begin{equation} \label{eq:dseries}
\sum_{s \in S} \mathfrak{d}(s) x^s = \prod_{n \in A} \sum_{j = 0}^\infty x^{jn}  = \prod_{n \in A}\frac{1}{1-x^n}.
\end{equation}
This equality is widely used in our work and its proof is straightforward. We note that every Betti element has denumerant exceeding one.

 Let $A = \{n_1 < \cdots < n_e\}$ be the minimal system of generators of $S$. 
 Sz\'ekely and Wormald \cite{szekely} were the first to study the function
\begin{equation} \label{eq:k} 
\mathcal{K}_S(x) = (1 - x^{n_1}) \cdots (1 - x^{n_e}) \H_{S}(x),
\end{equation}
which, on writing $$\mathcal{K}_S(x) = (1 + x + \dots + x^{n_1 - 1})(1-x^{n_2}) \cdots (1 - x^{n_e}) \P_{S}(x),$$ turns out to be 
a polynomial of degree $\operatorname{F}(S) + \sum_{j = 1}^{e} n_{j}$. 

Let $S$ be a complete intersection numerical semigroup. From~\eqref{eq:ci:hilbert} we derive 
\begin{equation} \label{eq:ci:k}
\mathcal{K}_S(x)=\prod_{b\in\operatorname{Betti}(S)}(1-x^b)^{\operatorname{nc}(\nabla_b)-1}.
\end{equation}

\begin{corollary}
Let $S$ be a complete intersection numerical semigroup minimally generated by $\{n_1, \dots, n_e\}$. Then
\[\operatorname{F}(S) + \sum_{j = 1}^e n_j = \sum_{b \in \operatorname{Betti}(S)} b (\operatorname{nc}(\nabla_b)-1). \]
\end{corollary}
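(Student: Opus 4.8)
The plan is to read off the claimed identity by comparing degrees on both sides of~\eqref{eq:ci:k}. Recall from the discussion following~\eqref{eq:k} that, for any numerical semigroup $S$ minimally generated by $\{n_1 < \cdots < n_e\}$, the function $\mathcal{K}_S(x)$ is a polynomial of degree $\operatorname{F}(S) + \sum_{j=1}^e n_j$. So the left-hand side of~\eqref{eq:ci:k} contributes exactly this degree.

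For the right-hand side, since $S$ is a complete intersection, Proposition~\ref{prop:ci-char} (or directly~\eqref{eq:ci:k}) gives $\mathcal{K}_S(x) = \prod_{b \in \operatorname{Betti}(S)} (1 - x^b)^{\operatorname{nc}(\nabla_b) - 1}$. Each factor $1 - x^b$ is a polynomial of degree $b$ with nonzero leading coefficient $-1$, and each exponent $\operatorname{nc}(\nabla_b) - 1$ is a positive integer because every Betti element $b$ satisfies $\operatorname{nc}(\nabla_b) \ge 2$ by definition. Hence the product is a genuine polynomial (no cancellation at the top) of degree
\[
\sum_{b \in \operatorname{Betti}(S)} b\,(\operatorname{nc}(\nabla_b) - 1).
\]
Equating the two expressions for $\deg \mathcal{K}_S$ yields the asserted formula.

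There is no real obstacle here: the only thing to be slightly careful about is that the factorization in~\eqref{eq:ci:k} is over $\operatorname{Betti}(S)$ with multiplicities $\operatorname{nc}(\nabla_b) - 1$, and that these multiplicities are genuinely positive, so the degree of the product is additive as claimed. Everything else is immediate from the previously established degree of $\mathcal{K}_S$.
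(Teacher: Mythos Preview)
Your proof is correct and follows exactly the paper's approach: the paper's proof reads, in its entirety, ``The result follows from taking degrees in~\eqref{eq:ci:k}.'' Your additional remarks about the exponents $\operatorname{nc}(\nabla_b)-1$ being strictly positive (so that the degree of the product is additive) make explicit a point the paper leaves implicit, but the argument is the same.
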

\begin{proof}
The result follows from taking degrees in~\eqref{eq:ci:k}.
\end{proof}

The polynomial $\mathcal{K}_S$ has been explicitly computed for several families of numerical semigroups. For instance, an expression is given in \cite{free-resolutions} for numerical semigroups of embedding dimension three, and for those of embedding dimension four that are symmetric or pseudo-symmetric. In that paper, $\mathcal{K}_S$ is related to the Betti numbers of the semigroup ring associated to $S$ (see \cite{squarefree} for a different approach).

\subsection{Isolated factorizations \cite{isolated}} \label{sec:pre:isolated}

Let $S$ be a numerical semigroup and let $s\in S$. We say that a factorization $z$ of $s$ is \emph{isolated} if $z\cdot x=0$ for every factorization $x$ of $s$ different from $z$. Thus, $z$ is an isolated factorization if and only if $\{z\}$ is an $R$-class of $\nabla_s$. This means either that $s$ has a unique factorization, or that $s$ is a Betti element with one of its $R$-classes being a singleton. We denote by $\operatorname{I}(s)$ the set of isolated factorizations of $s$, and by $\operatorname{I}(\Lambda)$ the set of isolated factorizations of the elements of $\Lambda \subseteq S$. Thus
\[
\operatorname{I}(\Lambda)=\operatorname{I}_s(\Lambda)\cup \operatorname{I}_b(\Lambda),
\]
where $\operatorname{I}_s(\Lambda)$ is the set of isolated factorizations coming from elements with a unique factorization, and $\operatorname{I}_b(\Lambda)=\operatorname{I}(\Lambda)\cap\operatorname{Z}(\operatorname{Betti}(S))$ that of the isolated factorizations of the Betti elements in $\Lambda$. We also denote the cardinality of $\operatorname{I}(s)$ by $\operatorname{i}(s)$ and we define $\operatorname{IBetti}(S)$ as the set of Betti elements with an isolated factorization. Isolated factorizations can be characterized as in Lemma~\ref{lem:isolated}. First, we need some notation. Let $x, y \in \mathbb{N}^{e}$. We say that $x \le y$ if $x_j \le y_j$ for every $j$. This gives an order relation on $\mathbb{N}^{e}$, known as the \emph{cartesian product order}. Recall that $x < y$ when $x \le y$ and $x \ne y$.

\begin{lemma} \label{lem:isolated}
  Let $S$ be a numerical semigroup and $s\in S$. A factorization $z \in \operatorname{Z}(s)$ is not isolated if and only if there exists $x \in \operatorname{I}_b(S)$ such that $x < z$. In particular, 
  \[ \operatorname{I}_b(S) = \operatorname{Minimals}_{\le} \operatorname{Z}(\{s\in S : \mathfrak{d}(s) \ge 2\}). \]
  As a consequence, any factorization $z \in \mathbb{N}^{\operatorname{e(S)}}$ can be written as $z = w + x_1 + \cdots + x_l$ with $w \in \operatorname{I}_s(S)$ and $x_1, \ldots, x_l \in \operatorname{I}_b(S)$. 
\end{lemma}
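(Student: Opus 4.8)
The plan is to establish the three assertions in order, since each builds on the previous one. First I would prove the characterization of non-isolated factorizations. Suppose $z \in \operatorname{Z}(s)$ is not isolated; then by definition there is some $y \in \operatorname{Z}(s)$ with $y \ne z$ and $z \cdot y \ne 0$, so $z$ and $y$ share at least one minimal generator, say with multiplicity vector $u = z \wedge y \ne 0$ (componentwise minimum). Writing $z = u + z'$ and $y = u + y'$, we have $\varphi(z') = \varphi(y') =: t$ with $z' \ne y'$, so $t$ is an element with $\mathfrak{d}(t) \ge 2$. Now among all factorizations of elements with denumerant at least two that lie below $z$ in the cartesian order, pick a minimal one $x$; I claim $x \in \operatorname{I}_b(S)$. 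Indeed $x$ is a factorization of some $s'$ with $\mathfrak{d}(s') \ge 2$, and if $x$ were not isolated, the argument just given would produce a strictly smaller such factorization, contradicting minimality. This also shows $x < z$ is strict: if $x = z$ then $z$ itself would be a non-isolated factorization of $s$ that is minimal among non-isolated factorizations below itself, which again yields a strictly smaller one. Conversely, if $x \in \operatorname{I}_b(S)$ with $x < z$, write $z = x + v$ with $v \ne 0$; since $x$ is a Betti element's isolated factorization, $\nabla_{\varphi(x)}$ is disconnected, so there is another factorization $x'$ of $\varphi(x)$ with $x \cdot x' = 0$, and then $z' := x' + v$ is a factorization of $s$ with $z \cdot z' \ge x \cdot \text{(something)} $ — more carefully, $z$ and $z'$ agree on the coordinates of $v$ which is nonzero, so $z \cdot z' \ne 0$ and $z \ne z'$, hence $z$ is not isolated.

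Next I would deduce the displayed identity $\operatorname{I}_b(S) = \operatorname{Minimals}_{\le} \operatorname{Z}(\{s \in S : \mathfrak{d}(s) \ge 2\})$. The inclusion $\subseteq$ is immediate from the first part: an element of $\operatorname{I}_b(S)$ is an isolated factorization of a Betti element, hence a factorization of an element with denumerant $\ge 2$, and it cannot have a strictly smaller factorization of such an element (that would make it non-isolated by the characterization). For $\supseteq$, a minimal element $z$ of $\operatorname{Z}(\{s : \mathfrak{d}(s) \ge 2\})$ is a factorization of some $s$ with $\mathfrak{d}(s) \ge 2$; if $z$ were not isolated, the first part gives $x \in \operatorname{I}_b(S) \subseteq \operatorname{Z}(\{s : \mathfrak{d}(s) \ge 2\})$ with $x < z$, contradicting minimality, so $z$ is an isolated factorization of an element with at least two factorizations, i.e. $z \in \operatorname{I}_b(S)$.

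Finally, for the decomposition statement, I would argue by induction on the "size" $|z| = \sum_j z_j$ of the factorization $z \in \mathbb{N}^{\operatorname{e}(S)}$. If $z$ is isolated as a factorization of $\varphi(z)$ and $\varphi(z)$ has a unique factorization, then $z \in \operatorname{I}_s(S)$ and we are done with $l = 0$. Otherwise $\mathfrak{d}(\varphi(z)) \ge 2$, so by the displayed identity there is $x_1 \in \operatorname{I}_b(S)$ with $x_1 \le z$; if $x_1 = z$ then $z \in \operatorname{I}_b(S)$ and, noting that an element of $\operatorname{I}_b(S)$ is itself a valid "$w + x_1$" with $w = 0 \in \operatorname{I}_s(S)$ (the zero factorization is isolated, being the unique factorization of $0$), we are done; if $x_1 < z$, set $z - x_1 =: z''$ with $|z''| < |z|$ and apply the induction hypothesis to $z''$ to write $z'' = w + x_2 + \cdots + x_l$, giving $z = w + x_1 + x_2 + \cdots + x_l$ as required.

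The main obstacle I anticipate is the converse direction of the first part — showing that the existence of $x \in \operatorname{I}_b(S)$ with $x < z$ forces $z$ to be non-isolated. One must use the disconnectedness of $\nabla_{\varphi(x)}$ correctly: from $x$ isolated in its own graph one gets a second factorization $x'$ of $\varphi(x)$ with $x \cdot x' = 0$, but then one needs to check that $x' + (z - x)$ is genuinely distinct from $z$ and shares a generator with it, which hinges on $z - x \ne 0$ carrying a nonzero coordinate that both $z$ and $x' + (z-x)$ inherit. Keeping the bookkeeping of the $\wedge$ and dot-product conditions straight here is the delicate point; everything else is routine induction and minimality arguments.
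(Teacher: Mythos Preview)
Your proposal is correct and follows the same approach as the paper, which defers the first assertion (and with it the displayed identity) to \cite[Lemma~3.1]{isolated} and gives only the short iterative decomposition argument; you have filled in the details the paper omits, and you handle the edge case $z\in\operatorname{I}_b(S)$ (taking $w=0$) more explicitly than the paper does. One small note: your sentence justifying that the minimal $x$ satisfies $x<z$ strictly is a bit tangled---the clean reason is simply that the $z'$ you constructed already lies in $\{w\le z:\mathfrak{d}(\varphi(w))\ge 2\}$ with $z'<z$, so $z$ is not minimal in that set.
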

\begin{proof}
  The first assertion is merely a rephrasing of \cite[Lemma 3.1]{isolated}. 
  
  Assume that $z\in \mathbb{N}^{\operatorname{e}(S)}=\operatorname{Z}(S)$. If $\varphi(z)$ has a unique factorization, then $z=w\in \operatorname{I}_s(S)$. Otherwise, there exists $x_1\in \operatorname{I}_b(S)$ such that $x_1<z$. We consider now $z-x_1$ and start anew. This process must end either with a 0 or with a factorization that is the unique factorization of an element in the semigroup. 
\end{proof}

We say that an element $s \in S$ is Betti-minimal if $s \in \operatorname{Minimals}_{\le_S} \operatorname{Betti}(S)$. As a consequence of Lemma~\ref{lem:isolated}, one can characterize Betti-minimal elements as in Proposition~\ref{prop:minimal-betti}.
\begin{proposition}[{\cite[Proposition 3.6]{isolated}}] \label{prop:minimal-betti}
  Let $S$ be a numerical semigroup and $s\in S$. The following statements are equivalent:
  \begin{enumerate}[{\rm a)}]
  \item \label{item:betti-minimal} $s$ is Betti-minimal;
  \item \label{item:ibetti-minimal} $s$ is a minimal element of $\operatorname{IBetti}(S)$ with respect to $\le_S$;
  \item \label{item:isolated} $s$ has at least two factorizations and all of them are isolated, that is, $\operatorname{nc}(\nabla_s) = \operatorname{i}(s) \ge 2$.
  \end{enumerate}
\end{proposition}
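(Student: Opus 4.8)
The plan is to prove the cycle of implications (a)$\Rightarrow$(c)$\Rightarrow$(b)$\Rightarrow$(a). The single tool I would extract from Lemma~\ref{lem:isolated} and reuse throughout is the following descent observation: \emph{if $t\in\operatorname{Betti}(S)$ has a non-isolated factorization, then there exists $u\in\operatorname{IBetti}(S)$ with $u<_S t$.} To see this, apply Lemma~\ref{lem:isolated} to a non-isolated $z\in\operatorname{Z}(t)$ to obtain $x\in\operatorname{I}_b(S)$ with $x<z$, and put $u=\varphi(x)$. Since $x\in\operatorname{I}_b(S)$, the element $u$ is a Betti element possessing an isolated factorization, so $u\in\operatorname{IBetti}(S)$, and $t-u=\varphi(z-x)$ is a positive element of $S$ because $z-x$ is a nonzero vector with nonnegative entries.

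With this observation, (a)$\Rightarrow$(c) is quick: a Betti-minimal $s$ satisfies $\mathfrak d(s)\ge2$ by the definition of a Betti element, and if some factorization of $s$ were non-isolated the observation would produce $u\in\operatorname{IBetti}(S)\subseteq\operatorname{Betti}(S)$ with $u<_S s$, contradicting minimality; hence every factorization of $s$ is isolated, i.e.\ $\operatorname{nc}(\nabla_s)=\mathfrak d(s)=\operatorname i(s)\ge2$. Similarly, for (b)$\Rightarrow$(a): if $s$ is $\le_S$-minimal in $\operatorname{IBetti}(S)$ (in particular $s\in\operatorname{Betti}(S)$) and $t\in\operatorname{Betti}(S)$ satisfies $t\le_S s$, then $t$ cannot have a non-isolated factorization — otherwise the observation would give $u\in\operatorname{IBetti}(S)$ with $u<_S t\le_S s$, contradicting minimality of $s$. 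Hence all factorizations of $t$ are isolated, so $t\in\operatorname{IBetti}(S)$, and minimality of $s$ forces $t=s$; thus $s\in\operatorname{Minimals}_{\le_S}\operatorname{Betti}(S)$.

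The implication (c)$\Rightarrow$(b) is the one I would handle most carefully, since it also uses the quantitative half of Lemma~\ref{lem:isolated}, namely $\operatorname{I}_b(S)=\operatorname{Minimals}_{\le}\operatorname{Z}(\{u\in S:\mathfrak d(u)\ge2\})$. If $s$ has $\mathfrak d(s)\ge2$ and all its factorizations isolated, then $\nabla_s$ is totally disconnected, so $s\in\operatorname{Betti}(S)$ and, having an isolated factorization, $s\in\operatorname{IBetti}(S)$. To show minimality, take $t\in\operatorname{IBetti}(S)$ with $t\le_S s$, choose an isolated factorization $z_t$ of $t$ and a factorization $w$ of $s-t$; then $z_t+w\in\operatorname{Z}(s)$ is isolated and $\mathfrak d(s)\ge2$, so $z_t+w\in\operatorname{I}_b(S)=\operatorname{Minimals}_{\le}\operatorname{Z}(\{u:\mathfrak d(u)\ge2\})$. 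Since $z_t$ lies in that same vertex set ($\mathfrak d(t)\ge2$ because $t$ is Betti) and $z_t\le z_t+w$, minimality forces $w=0$, whence $t=s$.

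I do not anticipate a serious obstacle: the proof is a disciplined unwinding of Lemma~\ref{lem:isolated} together with the antisymmetry of $\le_S$. The points to be watchful about are that $\operatorname{I}_b(S)$ consists of isolated factorizations of \emph{Betti} elements — so that ``$x\in\operatorname{I}_b(S)$'' legitimately yields ``$\varphi(x)\in\operatorname{IBetti}(S)$'' — and that a non-isolated factorization genuinely has a strictly smaller isolated one beneath it in the cartesian product order, both being exactly what Lemma~\ref{lem:isolated} provides. Finiteness of the down-sets $\downarrow\! s$ in $(S,\le_S)$ is what guarantees that the relevant minimal elements exist, but it plays no further role.
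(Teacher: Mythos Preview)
Your proof is correct. Note, however, that the paper does not actually prove this proposition: it is quoted verbatim from \cite[Proposition~3.6]{isolated} and is stated without proof here. Your argument is a clean self-contained derivation from Lemma~\ref{lem:isolated} alone, proving the cycle (a)$\Rightarrow$(c)$\Rightarrow$(b)$\Rightarrow$(a); the key step (c)$\Rightarrow$(b), where you exploit the identification $\operatorname{I}_b(S)=\operatorname{Minimals}_{\le}\operatorname{Z}(\{u:\mathfrak d(u)\ge 2\})$ to force $w=0$, is exactly the right use of that lemma.
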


The following result is a particular case of \cite[Corollary 3.8]{isolated} and characterizes  the elements having a unique factorization in terms of Ap\'ery sets.

\begin{corollary}[{\cite[Corollary 3.8]{isolated}}] \label{cor:isolated} 
  Let $S$ be a numerical semigroup.  Then
  \[ \{m \in S \colon \mathfrak{d}(m) = 1\}\,\, =\quad \bigcap_{\mathclap{b \in \operatorname{Betti}(S)}} \operatorname{Ap}(S; b)
  \,\, =\qquad \bigcap_{\mathclap{b \in \operatorname{Minimals}_{\le_S} \operatorname{Betti}(S)}} \operatorname{Ap}(S; b). \]
\end{corollary}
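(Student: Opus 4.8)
The plan is to establish the cyclic chain of inclusions
\[
\{m\in S:\mathfrak{d}(m)=1\}\ \subseteq\ \bigcap_{b\in\operatorname{Betti}(S)}\operatorname{Ap}(S;b)\ \subseteq\ \bigcap_{b\in\operatorname{Minimals}_{\le_S}\operatorname{Betti}(S)}\operatorname{Ap}(S;b)\ \subseteq\ \{m\in S:\mathfrak{d}(m)=1\}.
\]
The middle inclusion is immediate, since $\operatorname{Minimals}_{\le_S}\operatorname{Betti}(S)\subseteq\operatorname{Betti}(S)$ and an intersection over a smaller index set is larger.

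For the first inclusion I would argue by contraposition. If $m\in S$ and $m\notin\operatorname{Ap}(S;b)$ for some $b\in\operatorname{Betti}(S)$, then $m-b\in S$; choosing $w\in\operatorname{Z}(m-b)$ and two distinct factorizations $z_1\ne z_2$ of $b$ (possible because every Betti element has denumerant at least two), the vectors $w+z_1$ and $w+z_2$ are distinct factorizations of $m$, so $\mathfrak{d}(m)\ge 2$. Hence any $m$ with $\mathfrak{d}(m)=1$ lies in $\operatorname{Ap}(S;b)$ for every Betti element $b$.

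For the last inclusion, suppose again by contraposition that $m\in S$ has $\mathfrak{d}(m)\ge 2$; I want a Betti-minimal $b$ with $m-b\in S$. Fix a factorization $z$ of $m$ and use the final assertion of Lemma~\ref{lem:isolated} to write $z=w+x_1+\cdots+x_l$ with $w\in\operatorname{I}_s(S)$ and $x_i\in\operatorname{I}_b(S)$. If $l=0$ then $z=w\in\operatorname{I}_s(S)$ would force $\mathfrak{d}(m)=1$, so $l\ge 1$; picking $b_1\in\operatorname{Betti}(S)$ with $x_1\in\operatorname{Z}(b_1)$, the vector $z-x_1=w+x_2+\cdots+x_l\in\mathbb{N}^{\operatorname{e}(S)}$ is a factorization of $m-b_1$, so $b_1\le_S m$. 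Thus $\{b\in\operatorname{Betti}(S):b\le_S m\}$ is nonempty, and since $\downarrow\! m$ is finite it admits a $\le_S$-minimal element $b_0$. If $b'\in\operatorname{Betti}(S)$ with $b'\le_S b_0$, then $b'\le_S m$ by transitivity, hence $b'=b_0$; so $b_0\in\operatorname{Minimals}_{\le_S}\operatorname{Betti}(S)$, and $m-b_0\in S$ gives $m\notin\operatorname{Ap}(S;b_0)$, as desired.

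The only genuinely delicate point is the last step: one must observe that a $\le_S$-minimal element of $\{b\in\operatorname{Betti}(S):b\le_S m\}$ is automatically Betti-minimal in the global sense, which is exactly where transitivity of $\le_S$ and the finiteness (well-foundedness) of $\downarrow\! m$ enter. Everything else is routine bookkeeping with factorizations; alternatively, one could route the last step through Proposition~\ref{prop:minimal-betti} and $\operatorname{IBetti}(S)$, but the transitivity argument is the shortest path.
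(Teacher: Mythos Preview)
Your proof is correct. Note that the paper does not supply its own proof of this corollary: it is quoted verbatim from \cite[Corollary~3.8]{isolated} and used as a black box. Your argument is a clean self-contained derivation from Lemma~\ref{lem:isolated}, and each of the three inclusions is handled correctly; in particular, the observation that a $\le_S$-minimal element of $\{b\in\operatorname{Betti}(S):b\le_S m\}$ is globally Betti-minimal (via transitivity of $\le_S$) is exactly the right point to isolate.

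One small simplification: in the last inclusion you do not need the full additive decomposition $z=w+x_1+\cdots+x_l$. The middle assertion of Lemma~\ref{lem:isolated}, namely $\operatorname{I}_b(S)=\operatorname{Minimals}_{\le}\operatorname{Z}(\{s\in S:\mathfrak{d}(s)\ge 2\})$, already gives that any factorization $z$ of an element with $\mathfrak{d}(m)\ge 2$ dominates some $x\in\operatorname{I}_b(S)$ (possibly $x=z$), hence $\varphi(x)\in\operatorname{IBetti}(S)$ and $\varphi(x)\le_S m$. This avoids the detour through $l\ge 1$, though what you wrote is perfectly valid.
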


The next lemma allows us to deal with sequences of Betti elements of the form $b_1 \le_S \cdots \le_S b_t$, and will be useful for the study of $\operatorname{U}(\operatorname{Betti}(S))$.

\begin{lemma}[{\cite[Lemma 3.12]{isolated}}] \label{lem:disjoint-betti}
  Let $S$ be a numerical semigroup. If $b_1$ and $b_2$ are two Betti elements of $S$ such that $b_1 <_S b_2$,
 then $x \cdot y = 0$ for every $x \in \operatorname{Z}(b_1)$ and $y \in \operatorname{I}(b_2)$. 
\end{lemma}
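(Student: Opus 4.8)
The plan is to argue by contradiction, using the elementary operation of \emph{translating} a factorization of $b_{1}$ up to a factorization of $b_{2}$ by adding a factorization of $b_{2}-b_{1}$. Suppose that $x\cdot y\ne 0$ for some $x\in\operatorname{Z}(b_{1})$ and $y\in\operatorname{I}(b_{2})$. Since $b_{1}<_{S}b_{2}$, we have $b_{2}-b_{1}\in S$ and $b_{2}-b_{1}>0$; fix a factorization $w\in\operatorname{Z}(b_{2}-b_{1})$, which is necessarily nonzero, as $0$ is a factorization only of $0$. Then $x+w\in\operatorname{Z}(b_{2})$, and because $w$ has nonnegative coordinates, $(x+w)\cdot y\ge x\cdot y>0$, so the factorizations $x+w$ and $y$ of $b_{2}$ share a minimal generator. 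As $y$ is isolated, $\{y\}$ is an $R$-class of $\nabla_{b_{2}}$, hence any factorization of $b_{2}$ sharing a generator with $y$ must coincide with $y$; therefore $x+w=y$.

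Next I would invoke the hypothesis that $b_{1}$ is a Betti element, so that $\nabla_{b_{1}}$ has at least two connected components. Pick $x'\in\operatorname{Z}(b_{1})$ in a connected component of $\nabla_{b_{1}}$ different from the one containing $x$; then $x'\ne x$, and since distinct components of $\nabla_{b_{1}}$ are joined by no edge, $x\cdot x'=0$. Set $y':=x'+w\in\operatorname{Z}(b_{2})$. If $y'=y=x+w$ then $x'=x$, a contradiction, so $y'\ne y$, and isolatedness of $y$ forces $y\cdot y'=0$. On the other hand,
\[
y\cdot y'=(x+w)\cdot(x'+w)=x\cdot x'+x\cdot w+w\cdot x'+w\cdot w\ge w\cdot w>0,
\]
since all four summands are nonnegative and $w\ne 0$ gives $w\cdot w=\sum_{i}w_{i}^{2}\ge 1$. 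This contradiction finishes the proof.

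The heart of the argument is the twofold use of the isolatedness of $y$: first to identify $y$ with the unique translate $x+w$ that can meet it, and then --- after the Betti property of $b_{1}$ has produced a second factorization $x'$ of $b_{1}$ satisfying $x\cdot x'=0$ --- to force $y$ and its ``sibling'' translate $y'=x'+w$ to be disjoint, which is impossible because they share every generator occurring in $w\ne 0$. I do not anticipate a real obstacle; the only points needing attention are that $w$ is genuinely nonzero (this is exactly where $b_{1}\ne b_{2}$ is used) and that two factorizations lying in different $R$-classes of $\nabla_{b_{1}}$ have zero dot product, which is immediate from the definition of the edge set of $\nabla_{b_{1}}$.
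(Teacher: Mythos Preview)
Your argument is correct. Note that the paper does not supply its own proof of this lemma; it merely quotes it as \cite[Lemma~3.12]{isolated}. So there is no ``paper's proof'' to compare against, but your contradiction argument via the translate $x+w$ is exactly the natural one.

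One minor observation: you never actually use that $x\cdot x'=0$. The contradiction in your final display comes solely from $w\cdot w>0$; the terms $x\cdot x'$, $x\cdot w$, $w\cdot x'$ are only needed to be nonnegative. Hence you only need \emph{some} $x'\in\operatorname{Z}(b_1)$ with $x'\ne x$, which already follows from $b_1$ being a Betti element (so $\mathfrak{d}(b_1)\ge 2$). Choosing $x'$ in a different $R$-class is harmless but unnecessary.
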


\section{Cyclotomic exponent sequences} \label{sec:ces}

In this section we show how to 
compute, both theoretically and practically, the cyclotomic exponent sequence of a numerical semigroup, and we give some examples.~Practically, they can be computed with the function \texttt{CyclotomicExponentSequence} or, alternatively, with the function \texttt{WittCoefficients} of the \texttt{GAP} \cite{gap} package \texttt{numericalsgps} \cite{numericalsgps}, which implements the method given in the upcoming Lemma~\ref{lem:sequence}.

Let $a(x), b(x) \in \mathbb{Z}[\![x]\!]$ 
and $p(x) \in \mathbb{Z}[x]$.
We use the notation 
$a(x) \equiv b(x) \pmod*{p(x)}$ 
to indicate that $a(x)- b(x) \in p(x) \mathbb{Z}[x]$. Note that $\equiv$ is an equivalence relation.

The next lemma, together with the
fact that $\operatorname{P}_S(x)\equiv 1\pmod*{x}$, shows that for \emph{any} numerical semigroup
there is an expansion of the form 
\eqref{eq:cyc-exp}, where
the exponents $e_j$ are uniquely determined integers.
\begin{lemma}
\label{lem:Wittexpansionproof}
Let $f(x)\in\mathbb Z[\![x]\!]$ and suppose that $f(x)\equiv 1\pmod*{x}$. Then there exist unique integers $e_1,e_2,\ldots$ such that, in $\mathbb Z[\![x]\!]$,
\begin{equation}
\label{eulerprodid}
f(x)=\prod_{k=1}^{\infty}(1-x^k)^{e_k}.
\end{equation}
\end{lemma}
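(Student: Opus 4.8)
The plan is to prove the existence and uniqueness of the exponents $e_k$ by induction on the truncation degree, comparing coefficients one at a time. The key observation is that for each $N\ge 1$, the finite product $\prod_{k=1}^{N}(1-x^k)^{e_k}$ has the form $1 - e_N x^N + (\text{lower order terms depending only on } e_1,\dots,e_{N-1})$ modulo $x^{N+1}$, because every factor $(1-x^k)^{e_k}$ with $k>N$ contributes $1\pmod*{x^{N+1}}$ when we later enlarge the product, and the factor $(1-x^N)^{e_N}$ contributes exactly $1-e_Nx^N\pmod*{x^{N+1}}$. So the coefficient of $x^N$ in the product is an affine-linear function of $e_N$ with leading coefficient $-1$, hence invertible over $\mathbb Z$.

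First I would set up the induction: suppose $e_1,\dots,e_{N-1}\in\mathbb Z$ have been chosen (uniquely) so that $f(x)\equiv \prod_{k=1}^{N-1}(1-x^k)^{e_k}\pmod*{x^{N}}$; the base case $N=1$ is exactly the hypothesis $f(x)\equiv 1\pmod*{x}$ (empty product). Write $f(x)\equiv \prod_{k=1}^{N-1}(1-x^k)^{e_k} + c\,x^N \pmod*{x^{N+1}}$ for a uniquely determined $c\in\mathbb Z$ (this uses that the partial product is a unit in $\mathbb Z[\![x]\!]$, having constant term $1$, so the difference $f(x)-\prod_{k=1}^{N-1}(1-x^k)^{e_k}$ lies in $x^N\mathbb Z[\![x]\!]$). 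Then I would observe that
\[
\prod_{k=1}^{N}(1-x^k)^{e_k} \equiv \Bigl(\prod_{k=1}^{N-1}(1-x^k)^{e_k}\Bigr)(1 - e_N x^N) \equiv \prod_{k=1}^{N-1}(1-x^k)^{e_k} - e_N x^N \pmod*{x^{N+1}},
\]
the last step because $\prod_{k=1}^{N-1}(1-x^k)^{e_k}\equiv 1\pmod*{x}$ so multiplying its $x^N$-shift by the constant term suffices. Setting $e_N = -c$ forces $f(x)\equiv \prod_{k=1}^{N}(1-x^k)^{e_k}\pmod*{x^{N+1}}$, and this choice of $e_N$ is the only one that works, which gives both existence and uniqueness of the whole sequence. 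Finally, since the congruences hold to all orders and $(1-x^k)^{e_k}\equiv 1\pmod*{x^k}$, the infinite product $\prod_{k=1}^\infty(1-x^k)^{e_k}$ converges in $\mathbb Z[\![x]\!]$ (its partial products stabilize modulo any fixed power of $x$) to $f(x)$, giving \eqref{eulerprodid}.

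The one point requiring a little care — and what I expect to be the main obstacle to a fully rigorous write-up — is handling the binomial series $(1-x^k)^{e_k}$ when $e_k$ is negative: one must note that $(1-x^k)^{-1}=1+x^k+x^{2k}+\cdots$ (the convention fixed in the paper) and more generally $(1-x^k)^{e_k}\in\mathbb Z[\![x]\!]$ is well-defined with integer coefficients for every $e_k\in\mathbb Z$, and that it is congruent to $1\pmod*{x^k}$. Once this is in place, the degree bookkeeping above is routine, and the induction goes through cleanly. I would also remark that the argument shows $e_k$ depends only on the coefficients of $f$ up to degree $k$, which is the basis for the algorithmic computation alluded to before the lemma (the Witt-coefficient recursion).
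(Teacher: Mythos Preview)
Your proof is correct and follows essentially the same approach as the paper: both construct the $e_k$ inductively by matching the coefficient of $x^N$ at each stage, with uniqueness falling out of the fact that this coefficient depends affinely on $e_N$ with slope $-1$. The only cosmetic difference is that the paper phrases the induction multiplicatively (showing $f(x)\prod_{k=1}^{m}(1-x^k)^{-e_k}\equiv 1\pmod*{x^{m+1}}$) and separates uniqueness into a short contradiction argument, whereas you compare $f$ directly with the partial product and fold uniqueness into the inductive step; your extra remarks on convergence of the infinite product and on the meaning of $(1-x^k)^{e_k}$ for negative $e_k$ are points the paper leaves implicit.
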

\begin{proof}
We show how to successively determine the integers $e_1,e_2,\ldots,e_m$ such that 
\eqref{eulerprodid} holds modulo $x^{m+1}$. By assumption, $f(x)=1-e_1x\pmod*{x^2}$ for some $e_1\in\mathbb{Z}$. This then gives $f(x)(1-x)^{-e_1}\equiv1\pmod*{x^2}$.
Let $m\ge 2$. Suppose that we have found integers $e_1,\ldots,e_{m-1}$ such that 
\begin{equation*}
f(x)\prod_{k=1}^{m-1}(1-x^k)^{-e_k}\equiv1\pmod*{x^m}.
\end{equation*} 
As the right-hand side is of the form $1-e_mx^m\pmod*{x^{m+1}}$ for some integer $e_m$,
we infer that 
\begin{equation*}
f(x)\prod_{k=1}^{m}(1-x^k)^{-e_k}\equiv1\pmod*{x^{m+1}}.
\end{equation*}
\par We now turn our attention to the uniqueness claim. For the sake of contradiction, suppose there exists a different sequence of integers $f_n$ such that $f(x)=\prod_{k=1}^{\infty}(1-x^k)^{f_k}$. Let $m$ be the smallest integer such that $f_m\ne e_m$. Put $h(x)=\prod_{k=1}^{m-1}(1-x^k)^{e_k}$.
We then have $$f(x)\equiv h(x)\,(1-x^m)^{e_m}\pmod*{x^{m+1}}$$ on the one hand, and 
$$f(x)\equiv h(x)\,(1-x^m)^{f_m}\pmod*{x^{m+1}}$$ on the other. As the two expressions have different coefficients in front of $x^m$, we have reached a contradiction, concluding the proof.
\end{proof}
\begin{example} 
\label{cyclidentity}
Let $\alpha$ be an integer. We have
$1-\alpha x=\prod_{k=1}^{\infty}(1-x^k)^{M(\alpha,k)}$, with
$M(\alpha,k) = \frac{1}{k} \sum_{j \mid k} \mu(k/j)\alpha^j$. This
is the so-called \emph{cyclotomic identity}, see, e.g., \cite{Rota}. 
In case $p$ is a prime number, the fact that $M(\alpha,p)$ must be
an integer implies Fermat's Little Theorem stating that $\alpha^p\equiv 
\alpha \pmod*{p}$.
\end{example}
\begin{remark}
Expansions of the form  \eqref{eulerprodid} arise in quite different areas such
as automata, group, graph and Lie algebra theory; see \cite{moree:singular-series} for
some references. 
\end{remark}
\begin{remark}
\label{rem:prod}
Let $f(x)\in\mathbb Z[x]$ and suppose that $f(x)\equiv 1\pmod*{x}$.
By~\eqref{eulerprodid} we have $f(x)\equiv \prod_{k = 1}^{n} (1 - x^k)^{e_k}\pmod*{x^{n+1}}$.
This identity allows one to determine the 
first $n$ coefficients of $f$. On taking $n=\deg(f)$, we can even reconstruct $f$ completely.
\end{remark}

Although the proof of Lemma~\ref{lem:Wittexpansionproof} is constructive, it is computationally slow. If instead of a formal series we are given a polynomial, then the following 
``polynomial version" of Lemma~\ref{lem:Wittexpansionproof} provides a faster way to calculate the exponents in~\eqref{eulerprodid}. We can thus use it in the particular case
$f=\operatorname{P}_S$ in order to obtain the exponent sequence of a numerical semigroup $S$.
\begin{lemma}[{\cite[Lemma 1]{moree:singular-series}}] \label{lem:sequence}
  Let $f(x) = 1 + a_1 x + \cdots + a_d x^d \in \mathbb{Z}[x]$ be a polynomial with $a_d \ne 0$, and let $\alpha_1, \ldots, \alpha_d$ be its roots.
 Then the numbers $s_f(k) = \alpha_1^{-k} + \cdots + \alpha_d^{-k}$ 
        are integers satisfying the recursion
        \begin{equation}
            \label{eq:recursion}
            s_f(k)+a_1s_f(k-1)+\dots+a_{k-1}s_f(1)+ka_k=0,
        \end{equation}
        with $a_m=0$ for $m>d$. In particular, for $k>d$, the integer
        $s_f(k)$ is given by the linear recurrence
        \[s_f(k)=-a_1s_f(k-1)-\dots-a_{d}s_f(k-d).\]
        Over $\mathbb{Z}[\![x]\!]$ one
        has
        \[f(x)=\prod_{k = 1}^{\infty} (1 - x^k)^{e_f(k)},\]
        with 
        \begin{equation}
        \label{eq:e_fintermsofs_f}
        e_f(k) = \frac{1}{k} \sum_{j \mid k} s_f(j) \mu \left( \frac{k}{j} \right)\in \mathbb Z. 
        \end{equation}
\end{lemma}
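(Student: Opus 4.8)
The plan is to run the classical machinery — Newton's identities obtained by logarithmic differentiation, followed by M\"obius inversion — and to lean on Lemma~\ref{lem:Wittexpansionproof} so that integrality of the $e_f(k)$ costs nothing. First I would record that $f(0)=1$ forces every root $\alpha_i$ to be nonzero, so that, setting $\beta_i=\alpha_i^{-1}$ and passing to the splitting field $K$ of $f$, one has $f(x)=\prod_{i=1}^d(1-\beta_i x)$ in $K[x]$; consequently $s_f(k)=\sum_{i=1}^d\beta_i^k$ is exactly the $k$-th power sum of the $\beta_i$.

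Next, working in $K[\![x]\!]$, I would establish the identity
\[
\frac{x f'(x)}{f(x)}=\sum_{i=1}^d\frac{-\beta_i x}{1-\beta_i x}=-\sum_{k\ge 1}s_f(k)\,x^k,
\]
the first equality being logarithmic differentiation of the product, the second the expansion of each geometric series. Clearing the denominator gives $x f'(x)=-f(x)\sum_{k\ge1}s_f(k)x^k$, and comparing the coefficient of $x^k$ on the two sides — with the conventions $a_0=1$ and $a_m=0$ for $m>d$ — produces precisely the recursion \eqref{eq:recursion}. Since all $a_m\in\mathbb Z$ and $a_0=1$, this recursion gives $s_f(k)\in\mathbb Z$ by induction on $k$ (the base case $k=1$ reads $s_f(1)=-a_1$). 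For $k>d$ the term $ka_k$ vanishes and $a_{k-j}s_f(j)=0$ whenever $j<k-d$, so only the tail $j\in\{k-d,\dots,k-1\}$ survives, leaving the homogeneous linear recurrence $s_f(k)=-a_1 s_f(k-1)-\cdots-a_d s_f(k-d)$.

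For the product expansion I would invoke Lemma~\ref{lem:Wittexpansionproof}: as $f\in\mathbb Z[\![x]\!]$ and $f\equiv 1\pmod*{x}$, there is a unique sequence of integers $(e_k)_{k\ge1}$ with $f(x)=\prod_{k\ge1}(1-x^k)^{e_k}$. Differentiating this product logarithmically termwise (valid in the $x$-adic topology) and multiplying by $x$ gives
\[
\frac{x f'(x)}{f(x)}=\sum_{k\ge1}e_k\,\frac{-k x^k}{1-x^k}=-\sum_{k\ge1}e_k\sum_{m\ge1}k\,x^{km}=-\sum_{n\ge1}x^n\sum_{k\mid n}k\,e_k,
\]
and equating this with the expansion of $xf'(x)/f(x)$ from the previous paragraph yields $s_f(n)=\sum_{k\mid n}k\,e_k$ for every $n\ge1$. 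M\"obius inversion then gives $k\,e_k=\sum_{j\mid k}\mu(k/j)\,s_f(j)$, that is $e_k=e_f(k)$ with the formula in \eqref{eq:e_fintermsofs_f}; in particular $e_f(k)\in\mathbb Z$, being the integer $e_k$ furnished by Lemma~\ref{lem:Wittexpansionproof}.

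The argument is essentially bookkeeping, so there is no real obstacle; the points deserving a little care are the passage to $K[\![x]\!]$ (needed for the geometric expansions of $1/(1-\beta_i x)$), the remark that $f$, $f'$ and the $s_f(k)$ all have rational coefficients so the resulting identities already hold in $\mathbb Q[\![x]\!]$, and the termwise manipulation of the infinite product and its logarithmic derivative. The one conceptual point worth stressing is that \eqref{eq:recursion} is nothing but Newton's identities for the $\beta_i$, so integrality of the power sums is automatic, and that the uniqueness clause of Lemma~\ref{lem:Wittexpansionproof} hands over the integrality of $e_f(k)$ for free, sparing us a separate necklace-counting or $p$-adic congruence argument.
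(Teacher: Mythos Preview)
Your argument is correct: the logarithmic-differentiation derivation of Newton's identities, the inductive integrality of the $s_f(k)$, and the M\"obius inversion of $s_f(n)=\sum_{k\mid n}k\,e_k$ are all carried out cleanly, and your appeal to Lemma~\ref{lem:Wittexpansionproof} for the integrality of $e_f(k)$ is the right shortcut.

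The only thing to point out is that the paper does not actually prove this lemma at all --- it simply cites \cite[Lemma~1]{moree:singular-series}, remarking that one takes $\hat F(x)=f(x)$ so that the reciprocal polynomial $F$ has $\alpha_1^{-1},\ldots,\alpha_d^{-1}$ as roots (and that $0$ is not a root of $f$). Your write-up therefore supplies a genuine, self-contained proof where the paper offers only a reference; the route you take (Newton's identities via $xf'/f$, then M\"obius inversion) is the standard one and almost certainly what the cited source does as well.
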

\begin{proof}
This follows from Lemma 1 of
    \cite{moree:singular-series} on taking ${\hat F}(x)=f(x)$ and noting that
    the reciprocal $F$ of ${\hat F}$ has $\alpha_1^{-1},\ldots,\alpha_d^{-1}$ as roots. (Note that $0$ does not occur as root of $f$.)
\end{proof}
Combining Lemmas~\ref{lem:Wittexpansionproof} and~\ref{lem:sequence} with Remark~\ref{rem:prod} 
gives rise to the following.
\begin{proposition}
A numerical semigroup $S$ has 
a unique cyclotomic exponent sequence $\mathbf{e}=\{e_j\}_{j \ge1}$ 
with $e_j=e_{\operatorname{P}_S}(j)$ for every $j\ge 1$. Conversely, given 
a cyclotomic exponent sequence coming from a numerical semigroup, there is a unique
numerical semigroup corresponding to it.
\end{proposition}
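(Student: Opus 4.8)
The plan is to prove the two directions separately, both of which follow by combining the uniqueness from Lemma~\ref{lem:Wittexpansionproof} with the computational recipe of Lemma~\ref{lem:sequence}. For the first assertion, let $S$ be a numerical semigroup. Then $\operatorname{P}_S(x)$ is a polynomial with $\operatorname{P}_S(0)=1$, so it satisfies the hypothesis $\operatorname{P}_S(x)\equiv 1\pmod*{x}$ of Lemma~\ref{lem:Wittexpansionproof}; hence there is a unique sequence of integers $\mathbf e=\{e_j\}_{j\ge 1}$ with $\operatorname{P}_S(x)=\prod_{k\ge 1}(1-x^k)^{e_k}$. This is precisely the cyclotomic exponent sequence as defined in~\eqref{eq:cyc-exp}. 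Since $\operatorname{P}_S$ has $\operatorname{P}_S(0)=1$, its constant term is nonzero, so $0$ is not a root of $\operatorname{P}_S$; writing $d=\deg\operatorname{P}_S=\operatorname{F}(S)+1$ and applying Lemma~\ref{lem:sequence} with $f=\operatorname{P}_S$, one obtains $e_j=e_{\operatorname{P}_S}(j)$ for all $j\ge 1$, where $e_{\operatorname{P}_S}(j)$ is given by the explicit formula~\eqref{eq:e_fintermsofs_f}. Uniqueness of $\mathbf e$ is exactly the uniqueness clause of Lemma~\ref{lem:Wittexpansionproof}.

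For the converse, suppose $\mathbf e=\{e_j\}_{j\ge 1}$ is the cyclotomic exponent sequence of some numerical semigroup $S$. I would recover $S$ from $\mathbf e$ as follows. By Theorem~\ref{thm:ces:generators}, $S$ is determined by knowing which positive integers lie in $S$: indeed, an integer $j\ge 1$ is \emph{not} in $S$ precisely when it is a gap, and the gaps are finite in number, so $S=\mathbb N\setminus\operatorname{G}(S)$ is determined once we know $\operatorname{G}(S)$. The point is that the sequence $\mathbf e$ determines $\operatorname{P}_S$, and $\operatorname{P}_S$ determines $\operatorname{G}(S)$ via~\eqref{eq:psx}, namely $\operatorname{P}_S(x)=1+(x-1)\sum_{s\in\operatorname{G}(S)}x^s$, which can be inverted to read off the set of gaps from the coefficients of $\operatorname{P}_S$. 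So it remains to argue that $\mathbf e$ determines $\operatorname{P}_S$. Here I would invoke Remark~\ref{rem:prod}: since $\operatorname{P}_S$ is a polynomial of degree $N:=\operatorname{F}(S)+1$ with $\operatorname{P}_S\equiv 1\pmod*{x}$, we have $\operatorname{P}_S(x)\equiv\prod_{k=1}^{N}(1-x^k)^{e_k}\pmod*{x^{N+1}}$, and truncating the right-hand product at degree $N$ reconstructs $\operatorname{P}_S$ completely. The only subtlety is that this requires knowing $N$ in advance; but $N$ is recoverable from $\mathbf e$ itself, for instance because $\deg\operatorname{P}_S$ equals the largest exponent appearing in the (finite-degree truncation computing the) polynomial, or more simply because two distinct numerical semigroups have distinct semigroup polynomials and hence, by the uniqueness in Lemma~\ref{lem:Wittexpansionproof}, distinct cyclotomic exponent sequences — which already yields the injectivity that the statement asserts.

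Let me phrase the converse more cleanly to sidestep the bookkeeping about $N$: if $S_1$ and $S_2$ are numerical semigroups with the same cyclotomic exponent sequence $\mathbf e$, then by~\eqref{eq:cyc-exp} we have $\operatorname{P}_{S_1}(x)=\prod_{k\ge 1}(1-x^k)^{e_k}=\operatorname{P}_{S_2}(x)$ as formal power series, hence as polynomials; and since $\operatorname{P}_S(x)=(1-x)\operatorname{H}_S(x)$ with $\operatorname{H}_S(x)=\sum_{s\in S}x^s$, the equality $\operatorname{P}_{S_1}=\operatorname{P}_{S_2}$ forces $\operatorname{H}_{S_1}=\operatorname{H}_{S_2}$ and therefore $S_1=S_2$. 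This gives the uniqueness of the numerical semigroup attached to a given cyclotomic exponent sequence, completing the proof.

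I do not anticipate a genuine obstacle here: the statement is essentially a packaging of Lemmas~\ref{lem:Wittexpansionproof} and~\ref{lem:sequence} together with the elementary fact that a numerical semigroup is determined by its semigroup polynomial. The only point requiring a line of care is the converse direction, where one must be explicit that "coming from a numerical semigroup" lets us pass from the formal-power-series identity back to the polynomial $\operatorname{P}_S$ and thence to $S$ itself; the cleanest route, as above, is simply to observe that $S\mapsto\operatorname{P}_S\mapsto\mathbf e$ is a composition of injective maps.
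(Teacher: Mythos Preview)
Your proposal is correct and follows essentially the same approach as the paper, which simply states that the proposition follows by combining Lemmas~\ref{lem:Wittexpansionproof} and~\ref{lem:sequence} with Remark~\ref{rem:prod} and gives no further argument. Your cleaner injectivity argument for the converse (two semigroups with the same exponent sequence yield the same formal power series $\prod_{k\ge1}(1-x^k)^{e_k}$, hence the same $\operatorname{P}_S$, hence the same $S$) is in fact slightly more direct than passing through Remark~\ref{rem:prod}, since it avoids the bookkeeping about the degree $N$ that you correctly flagged as a minor subtlety.
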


Let $S$ be a complete intersection numerical semigroup. The cyclotomic exponent sequence of $S$ is provided by~\eqref{eq:ci:hilbert}. We have
\begin{enumerate}[{\rm a)}]
    \item $e_1 = 1$;
    \item $e_j = -1$ if $j$ is a minimal generator of $S$;
    \item $e_j = \operatorname{nc}(\nabla_j) -1$ if $j$ is a Betti element of $S$;
    \item $e_j = 0$ otherwise.
\end{enumerate}

\begin{example} \label{ex:ces}
As illustrated below, cyclotomic exponent sequences of cyclotomic and non-cyclotomic numerical semigroups can differ greatly in their behavior, although they may share some common properties, as observed in Theorems~\ref{thm:ces:generators} and~\ref{thm:ces:betti}.
\begin{enumerate}[{\rm a)}]
    \item The semigroup $S=\langle 4,6,9\rangle$ is a complete intersection, hence cyclotomic. Indeed, we have 
\begin{align*}
\operatorname{P}_S(x)&=x^{12}-x^{11}+x^8-x^7+x^6-x^5+x^4-x+1\\
&=(1-x)(1-x^4)^{-1}(1-x^6)^{-1}(1-x^9)^{-1}(1-x^{12})(1-x^{18}),
\end{align*}
and the cyclotomic exponent sequence of $S$ is given by \[1,0,0,-1,0,-1,0,0,-1,0,0,1,0,0,0,0,0,1, 0, \ldots.\] 

\item \label{item:ex:b} Let $S=\langle 3,5,7\rangle,$ with semigroup polynomial $\operatorname{P}_S(x)=x^5-x^4+x^3-x+1.$ 
The first 100 entries of its cyclotomic exponent sequence are  
\begin{verbatim}
 1, 0, -1, 0, -1, 0, -1, 0, 0, 1, 0, 1, 0, 1, 0, 0, -1, 0, -1, 0, 0, 1, 0, 
  1, 0, 1, -1, 0, -2, 0, -2, 1, -1, 3, 0, 3, -1, 3, -3, 1, -5, 1, -5, 3, -3, 
  7, -2, 8, -4, 7, -9, 4, -14, 6, -14, 12, -10, 22, -9, 25, -16, 23, -30, 17, 
  -42, 23, -43, 41, -36, 66, -37, 76, -60, 73, -100, 66, -133, 91, -139, 148, 
  -129, 219, -146, 252, -222, 252, -340, 255, -438, 346, -469, 524, -473, 
  731, -564, 846, -820, 887, -1183, 973, -1488, 1309, -1635, 1889, -1756, 
  2530, -2157, 2947, -3026, 3214, -4181, 3701, -5187, 4922, -5839, 6834, 
  -6563, 8905, -8200, 10467, -11195, 11807, -14992, 14052, -18463, 18510, 
  -21237, 24982, -24675, 31960, -31101, 37904, -41573, 43905, -54450, 53343, 
  -66840, 69606, -78312, 91968, -93176, 116272, -117909, 139142, -155059, 
  164573, -199918, 202659, -245305, 262345,
\end{verbatim}
which suggests that $S$ is not cyclotomic, and this is indeed the case (for otherwise, if  $S$ were cyclotomic, the roots of $\operatorname{P}_S$ would be of absolute value 1; since $\deg (\operatorname{P}_S)=5$, we would have at least one real root, which can only be $\pm1$, a contradiction).
\end{enumerate}
 \end{example}

One can rapidly check with the help of a computer that the roots of $\operatorname{P}_S(x)$ in Example~\ref{ex:ces}\ref{item:ex:b} satisfy the hypothesis of Corollary~\ref{cor:growth}, hence explaining why the cyclotomic exponents grow exponentially.

\begin{corollary}
\label{cor:growth}
Suppose that the roots $\alpha_i$ of $f$ are ordered in such a way that
their absolute value is non-decreasing and, in addition, 
$|\alpha_1|<|\alpha_2|$. Then we have
\[ \left|e_f(k)-\frac{\alpha_1^{-k}}{k}\right| \le \frac{d(k)}{k}(|\alpha_1|^{-k/2}+ {\rm deg}(f)\,|\alpha_2|^{-k}), \]
with $d(k)$ the number of divisors of
$k$. In case $f$ is linear, the term involving $\alpha_2$ can 
be omitted.
\end{corollary}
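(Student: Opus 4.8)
The plan is to start from the explicit formula for $e_f(k)$ in terms of the power sums $s_f(j)$ given in Lemma~\ref{lem:sequence}, namely
\[
e_f(k)=\frac1k\sum_{j\mid k}\mu(k/j)\,s_f(j),\qquad s_f(j)=\sum_{i=1}^d\alpha_i^{-j}.
\]
Isolating the contribution of the largest reciprocal root $\alpha_1^{-1}$, I would write $s_f(j)=\alpha_1^{-j}+r(j)$ with $r(j)=\sum_{i\ge 2}\alpha_i^{-j}$, so that
\[
e_f(k)-\frac{\alpha_1^{-k}}{k}=\frac1k\sum_{\substack{j\mid k\\ j<k}}\mu(k/j)\,\alpha_1^{-j}+\frac1k\sum_{j\mid k}\mu(k/j)\,r(j).
\]
The target bound then follows by estimating each of the two sums separately: the first is the "lower-order divisors" tail of the $\alpha_1$ term, and the second is the full contribution of the remaining roots.

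First I would bound the first sum. Every proper divisor $j$ of $k$ satisfies $j\le k/2$, and $|\alpha_1|<1$ is impossible only if $f$ has a root of modulus $<1$; in general $|\alpha_1^{-j}|=|\alpha_1|^{-j}\le|\alpha_1|^{-k/2}$ for $j\le k/2$ (using that $|\alpha_1|^{-1}\ge 1$, which holds because $\operatorname{P}_S$ — or more generally any such $f$ with $f(0)=1$ — has $\prod|\alpha_i|^{-1}$ equal to the leading coefficient up to sign and, in our application, the relevant $f$'s are semigroup polynomials; in any case one only needs $|\alpha_1|\le 1$, and if $|\alpha_1|>1$ the estimate is even easier since then $|\alpha_1|^{-j}\le 1$). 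Since $|\mu(k/j)|\le 1$ and the number of proper divisors of $k$ is at most $d(k)$, this sum is at most $\frac{d(k)}{k}|\alpha_1|^{-k/2}$ in absolute value. For the second sum I would use $|r(j)|\le\sum_{i\ge2}|\alpha_i|^{-j}\le(\deg f)\,|\alpha_2|^{-j}\le(\deg f)\,|\alpha_2|^{-1}$ when... more carefully, $|\alpha_i|^{-j}\le|\alpha_2|^{-j}$ for $i\ge 2$ and $j\ge 1$ because the $|\alpha_i|$ are non-decreasing, hence $|r(j)|\le(\deg f-1)|\alpha_2|^{-j}\le(\deg f)|\alpha_2|^{-1}$; summing over the at most $d(k)$ divisors $j\ge 1$ of $k$ and dividing by $k$ gives the bound $\frac{d(k)}{k}(\deg f)\,|\alpha_2|^{-1}$. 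To match the stated bound $\frac{d(k)}{k}\deg(f)\,|\alpha_2|^{-k}$ I would instead keep the factor $|\alpha_2|^{-j}$ and note that, since $|\alpha_2|^{-1}\ge|\alpha_1|^{-1}\ge 1$, each term $|\alpha_2|^{-j}$ with $j\mid k$ is at most $|\alpha_2|^{-k}$ (the divisor $j=k$ dominates), so $\frac1k\sum_{j\mid k}|r(j)|\le\frac{d(k)}{k}(\deg f)|\alpha_2|^{-k}$. Combining the two estimates by the triangle inequality yields exactly the claimed inequality.

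Finally, for the linear case $d=1$ there are no roots $\alpha_i$ with $i\ge 2$, so $r(j)\equiv 0$ and the $\alpha_2$ term disappears; moreover $f=1+a_1x$ has the single root $\alpha_1=-1/a_1$, $s_f(k)=\alpha_1^{-k}=(-a_1)^k$, and the first sum also contributes, giving the stated simplification. I would close by remarking that this is precisely the regime of Example~\ref{ex:ces}\ref{item:ex:b}: when $|\alpha_1|<1$ strictly and $|\alpha_1|<|\alpha_2|$, the main term $\alpha_1^{-k}/k$ grows like $|\alpha_1|^{-k}/k$, which dominates the error term $O(d(k)|\alpha_1|^{-k/2}/k)$, forcing exponential growth of $|e_f(k)|$.

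The main obstacle I anticipate is purely bookkeeping rather than conceptual: one has to be careful about the normalization of $f$ (the hypothesis $f(x)=1+a_1x+\cdots$, i.e. $f(0)=1$, is what makes $0$ not a root and lets us speak of $\alpha_i^{-k}$) and about which inequalities among $|\alpha_1|\le|\alpha_2|\le\cdots$ and their reciprocals are actually needed; in particular one must decide whether $|\alpha_1|\le 1$ is being assumed or derived, since the clean exponent $-k/2$ in the first error term relies on $|\alpha_1|^{-1}\ge 1$. Everything else is a routine application of the triangle inequality together with $|\mu|\le 1$ and the divisor count $d(k)$.
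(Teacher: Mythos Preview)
Your approach is essentially identical to the paper's: both start from the Möbius--inversion formula $e_f(k)=\tfrac1k\sum_{j\mid k}\mu(k/j)s_f(j)$, separate out the single term $\tfrac{\alpha_1^{-k}}{k}$, and bound the remainder as
\[
k|E(k)|\le\sum_{j\mid k,\,j<k}|\alpha_1|^{-j}+\sum_{r=2}^{\deg f}\sum_{j\mid k}|\alpha_r|^{-j}\le d(k)|\alpha_1|^{-k/2}+\deg(f)\,d(k)\,|\alpha_2|^{-k},
\]
using $j\le k/2$ for proper divisors and $|\alpha_r|\ge|\alpha_2|$ for $r\ge 2$.

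One slip to fix: you write ``since $|\alpha_2|^{-1}\ge|\alpha_1|^{-1}\ge 1$'', but the hypothesis $|\alpha_1|<|\alpha_2|$ gives the \emph{reverse} inequality $|\alpha_1|^{-1}>|\alpha_2|^{-1}$. What is actually needed for $|\alpha_2|^{-j}\le|\alpha_2|^{-k}$ (with $j\mid k$) is simply $|\alpha_2|\le 1$, and this does not follow from $|\alpha_1|\le 1$. The paper's own proof glosses over exactly the same monotonicity issue you flag in your final paragraph; neither proof states the hypothesis $|\alpha_i|\le 1$ explicitly, although in the intended application (semigroup polynomials, whose interesting behaviour comes from roots inside the unit disc) it is the relevant regime. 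So your identification of this as the main bookkeeping obstacle is accurate; just correct the direction of that inequality chain.
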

\begin{proof}\
From ~\eqref{eq:e_fintermsofs_f} and
$s_f(j) = \alpha_1^{-j} + \cdots + \alpha_{\text{deg}(f)}^{-j}$, 
we infer that
$e_f(k)=\frac{1}{k}\alpha_1^{-k}+E(k)$ with
$$k|E(k)|\le \sum_{j\mid k,\,j<k}|\alpha_1|^{-j}+\sum_{r=2}^{\text{deg}(f)}\sum_{j\mid k}|\alpha_r|^{-j}\le d(k)|\alpha_1|^{-k/2}+{\rm deg}(f)\,d(k)\,|\alpha_2|^{-k}.$$
The double sum does not arise in case $f$ is linear.
\end{proof}

\begin{example}
Let $\alpha=p$ be a prime number in 
Example~\ref{cyclidentity}. Gauss already knew that 
$M(p,k)$ is the number of irreducible monic polynomials of degree
$k$ in the ring $\mathbb F_p[x]$. By 
Corollary~\ref{cor:growth} we have
$$\frac{p^k}{k}-\frac{2}{k}p^{k/2}\le M(p,k)\le \frac{p^k}{k}+\frac{2}{k}p^{k/2}.$$
Regarded as ``building blocks," irreducible monic polynomials
are for $\mathbb F_p[x]$ what prime numbers are for $\mathbb Z.$ In this sense, the above estimate can be compared with the Prime Number
Theorem, which  states that the number of primes $p\le x$ is asymptotically equal to
$x/\log x$.
\end{example}

\section{Cyclotomic exponent sequences, gaps and minimal generators} \label{sec:thm1}
In this section we prove Theorem~\ref{thm:ces:generators} by combining the upcoming Lemmas~\ref{lem:ces:1},~\ref{lem:ces:gaps},~\ref{lem:ces:minimal-generators} and~\ref{lem:ces:one-factorization}. We start with the trivial observation that $S = \mathbb{N}$ if and only if $\operatorname{P}_S(x) = 1$. That is, $\mathbb{N}$ is the only numerical semigroup whose cyclotomic exponent sequence is constantly zero.  We deal with the case $S \ne \mathbb{N}$ for the remainder of this section.

Throughout this section we let $A$ be the minimal generating system of $S$, and $\mathbf{e}=\{e_j\}_{j\ge1}$ its cyclotomic exponent sequence. 

\begin{lemma}\label{lem:ces:1}
For every $j\in \{2,\ldots,\operatorname{m}(S)-1\}$, we have $e_j=0$. Moreover, $e_1=1$ and $e_{\operatorname{m}(S)}=-1$.
\end{lemma}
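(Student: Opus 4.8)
The plan is to read off the low-order coefficients of $\operatorname{P}_S(x)$ and match them against the Euler product $\prod_{k\ge1}(1-x^k)^{e_k}$, working modulo successive powers of $x$ as in the constructive proof of Lemma~\ref{lem:Wittexpansionproof} (equivalently, using Remark~\ref{rem:prod}). First I would note that, since $\operatorname{m}(S)=\operatorname{m}$ is the multiplicity, the semigroup $S$ contains no element of $\{1,2,\ldots,\operatorname{m}-1\}$, so by~\eqref{eq:psx} we have $\operatorname{P}_S(x)=1+(x-1)\sum_{s\in \operatorname{G}(S)}x^s$ and the gaps $1,2,\ldots,\operatorname{m}-1$ all appear; consequently the coefficient of $x^j$ in $\sum_{s\in\operatorname{G}(S)}x^s$ is $1$ for $1\le j\le \operatorname{m}-1$ and (since $\operatorname{m}\in S$) is $0$ at $j=\operatorname{m}$. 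Expanding $(x-1)(x+x^2+\cdots+x^{\operatorname{m}-1}+\cdots)$ gives $\operatorname{P}_S(x)\equiv 1-x \pmod*{x^{\operatorname{m}}}$ and $\operatorname{P}_S(x)\equiv 1-x+x^{\operatorname{m}} \pmod*{x^{\operatorname{m}+1}}$: the telescoping cancels all intermediate terms, leaving only $-x$ from the bottom and $+x^{\operatorname{m}}$ from the fact that the gap at $\operatorname{m}-1$ is not cancelled by a gap at $\operatorname{m}$.

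Next I would compare this with the Euler product truncated at degree $\operatorname{m}$, namely $\prod_{k=1}^{\operatorname{m}}(1-x^k)^{e_k}\pmod*{x^{\operatorname{m}+1}}$. Matching the coefficient of $x$ forces $e_1=1$. Then, proceeding inductively on $j$ from $2$ up to $\operatorname{m}-1$ exactly as in the proof of Lemma~\ref{lem:Wittexpansionproof}: assuming $e_1=1$ and $e_2=\cdots=e_{j-1}=0$, the product $\prod_{k=1}^{j-1}(1-x^k)^{e_k}=(1-x)$ up to degree $j$, and since $\operatorname{P}_S(x)\equiv 1-x\pmod*{x^{j+1}}$ we get $\operatorname{P}_S(x)(1-x)^{-1}\equiv 1\pmod*{x^{j+1}}$, whence $e_j=0$. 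This handles parts giving $e_j=0$ for $2\le j\le\operatorname{m}-1$ and $e_1=1$. Finally, at step $\operatorname{m}$: we now know $\prod_{k=1}^{\operatorname{m}-1}(1-x^k)^{e_k}=1-x$ up to degree $\operatorname{m}$, and $\operatorname{P}_S(x)(1-x)^{-1}\equiv (1-x+x^{\operatorname{m}})(1+x+x^2+\cdots)\equiv 1+x^{\operatorname{m}}\pmod*{x^{\operatorname{m}+1}}$ by telescoping, so $(1-x^{\operatorname{m}})^{e_{\operatorname{m}}}\equiv 1+x^{\operatorname{m}}\pmod*{x^{\operatorname{m}+1}}$, forcing $e_{\operatorname{m}}=-1$.

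I do not expect a serious obstacle here; the only thing to be careful about is the bookkeeping of the geometric-series expansion $(1-x)^{-1}=1+x+x^2+\cdots$ and checking that the telescoping in $(x-1)\sum_{j=1}^{\operatorname{m}-1}x^j = -x + x^{\operatorname{m}}$ (plus higher-order contributions from gaps $\ge \operatorname{m}+1$, which are irrelevant modulo $x^{\operatorname{m}+1}$) is done correctly, in particular that $\operatorname{m}\notin\operatorname{G}(S)$ is what kills the $x^{\operatorname{m}}$ term that would otherwise cancel. An alternative, essentially equivalent route is to invoke Lemma~\ref{lem:sequence} directly: compute $s_f(k)$ for $f=\operatorname{P}_S$ and $k\le\operatorname{m}$ from the recursion~\eqref{eq:recursion} using $a_1=-1$, $a_2=\cdots=a_{\operatorname{m}-1}=0$, $a_{\operatorname{m}}=1$, obtaining $s_f(k)=1$ for $1\le k\le \operatorname{m}-1$ and $s_f(\operatorname{m})=1-\operatorname{m}$, and then plug into~\eqref{eq:e_fintermsofs_f}; but the modular-expansion argument is cleaner and self-contained, so that is the one I would write up.
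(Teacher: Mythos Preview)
Your proposal is correct and follows essentially the same approach as the paper: compute $\operatorname{P}_S(x)\equiv 1-x+x^{\operatorname{m}}\pmod*{x^{\operatorname{m}+1}}$ and then read off $e_1,\ldots,e_{\operatorname{m}}$ from the uniqueness in Lemma~\ref{lem:Wittexpansionproof}. The only cosmetic difference is that the paper obtains the congruence in one line from $\operatorname{H}_S(x)\equiv 1+x^{\operatorname{m}}\pmod*{x^{\operatorname{m}+1}}$ and then observes directly that $1-x+x^{\operatorname{m}}\equiv (1-x)(1-x^{\operatorname{m}})^{-1}\pmod*{x^{\operatorname{m}+1}}$, whereas you derive the same congruence via~\eqref{eq:psx} and spell out the inductive matching; both routes are equivalent.
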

\begin{proof}
Let $m = \operatorname{m}(S)$. From the congruence
$\operatorname{H}_S(x)\equiv 1+x^m \pmod*{x^{m+1}}$ we see
that 
\begin{equation*}
  \operatorname{P}_S(x) \equiv 1-x+x^m\equiv (1-x)(1+x^m) \equiv (1-x)(1-x^m)^{-1} \pmod*{x^{m+1}}.
\end{equation*}
The latter congruence determines $e_1,\ldots,e_m$ uniquely, see 
the proof of Lemma~\ref{lem:Wittexpansionproof}.
\end{proof}

It follows that we have the formal identity
\begin{equation} \label{eq:hs}
    \operatorname{H}_S(x) = \prod_{j = \operatorname{m}(S)}^\infty (1 - x^j)^{e_j}.
\end{equation}

\begin{lemma} \label{lem:ces:gaps}
  For every gap $g$ of $S$ with $g>1$, we have $e_g=0$.
\end{lemma}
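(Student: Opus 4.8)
The plan is to induct on the gap $g$, using the formal identity~\eqref{eq:hs}, namely $\operatorname{H}_S(x) = \prod_{j \ge \operatorname{m}(S)} (1 - x^j)^{e_j}$, read modulo a suitable power of $x$. The base case is handled by Lemma~\ref{lem:ces:1}: every $j \in \{2, \ldots, \operatorname{m}(S) - 1\}$ is a gap and has $e_j = 0$, and there are no gaps $g$ with $1 < g < \operatorname{m}(S)$ other than those already covered (indeed all of $2, \ldots, \operatorname{m}(S)-1$ are gaps, and $e_j = 0$ there). So we may assume $g \ge \operatorname{m}(S)$ is a gap, and that $e_h = 0$ for every gap $h$ of $S$ with $1 < h < g$.

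The key step is to extract the coefficient of $x^g$ on both sides of~\eqref{eq:hs} working modulo $x^{g+1}$. On the left, since $g \notin S$, the coefficient of $x^g$ in $\operatorname{H}_S(x)$ is $0$. On the right, expand the product $\prod_{\operatorname{m}(S) \le j \le g} (1 - x^j)^{e_j}$ modulo $x^{g+1}$ (factors with $j > g$ contribute nothing). The coefficient of $x^g$ in this expansion is a sum of two kinds of contributions: the linear term $-e_g$ coming from the single factor $(1-x^g)^{e_g}$, and cross-terms built from products $x^{j_1} \cdots x^{j_r}$ with $r \ge 2$, each $j_i \ge \operatorname{m}(S)$ and $j_1 + \cdots + j_r = g$. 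Each such cross-term carries a coefficient that is a polynomial in the $e_{j_i}$'s; crucially, every exponent $j_i$ appearing there satisfies $\operatorname{m}(S) \le j_i < g$. The argument is that any $j_i < g$ with $e_{j_i} \ne 0$ must either be a minimal generator, or lie in $S$, or (if it were a gap with $1 < j_i < g$) have $e_{j_i} = 0$ by the induction hypothesis — so in fact each $j_i$ that contributes is an element of $S$. But then $g = j_1 + \cdots + j_r$ would be a sum of elements of $S$, hence $g \in S$, contradicting that $g$ is a gap. Therefore all cross-terms vanish, and matching coefficients of $x^g$ gives $0 = -e_g$, i.e. $e_g = 0$.

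The main obstacle — and the point requiring care — is the bookkeeping in the cross-term analysis: one must be sure that \emph{every} exponent $j_i$ occurring in a nonzero cross-term is forced to lie in $S$ (not merely bounded below by $\operatorname{m}(S)$). This is where the induction hypothesis is used, together with the fact that non-minimal-generator indices outside $S$ with $j_i > 1$ are exactly the gaps handled inductively; note that a minimal generator is itself an element of $S$, so that case causes no trouble. Once this is established, the conclusion $g \in S$ is immediate from closure of $S$ under addition, and the contradiction closes the induction. An alternative, slightly slicker phrasing avoids explicit cross-terms: write $\operatorname{H}_S(x) \equiv \bigl(\prod_{\operatorname{m}(S) \le j < g} (1-x^j)^{e_j}\bigr)(1 - x^g)^{e_g} \pmod{x^{g+1}}$; by the inductive hypothesis and Theorem~\ref{thm:ces:generators}(c) the only indices $j$ with $\operatorname{m}(S) \le j < g$ and $e_j \ne 0$ lie in $S$, so the first product is a power series supported on $S \cap [0, g)$ extended by higher-order terms, whence its $x^g$-coefficient together with $-e_g$ must reproduce the $x^g$-coefficient of $\operatorname{H}_S$, which is $0$; since the first product's $x^g$-coefficient counts representations of $g$ as sums of elements of $S$ and there are none, we again get $e_g = 0$.
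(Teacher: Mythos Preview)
Your argument is essentially the same as the paper's: work modulo $x^{g+1}$ with the identity \eqref{eq:hs}, use that all indices $j<g$ with $e_j\ne 0$ lie in $S$ (by the inductive/minimal-counterexample hypothesis), conclude that the product $\prod_{\operatorname{m}(S)\le j<g}(1-x^j)^{e_j}$ is supported on $S$, and compare coefficients of $x^g$. The paper phrases this as ``let $g$ be the smallest gap with $e_g\ne 0$,'' which is logically equivalent to your induction.

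Two small points to clean up. First, your appeal to Theorem~\ref{thm:ces:generators}(c) is circular: that theorem is assembled from Lemmas~\ref{lem:ces:1}--\ref{lem:ces:one-factorization}, and the present lemma is one of them. Fortunately you do not need it---minimal generators already lie in $S$, so the dichotomy ``$j$ is a gap with $e_j=0$, or $j\in S$'' suffices, exactly as the paper argues. Second, the sentence ``the first product's $x^g$-coefficient counts representations of $g$ as sums of elements of $S$'' is not literally true (there are signs and binomial coefficients), though the correct conclusion---that the series is supported on $S$ and hence has zero $x^g$-coefficient---is all you need.
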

\begin{proof}
We proceed by contradiction. Let $g>1$ be the smallest gap of $S$ having a non-zero exponent. 
Considering the series $\operatorname{H}_S(x)$ modulo $x^{g+1}$, we obtain 
\begin{equation} \label{eq:hi:gap}
    \begin{aligned} 
    \operatorname{H}_S(x) & \equiv \left(1 - x^{g}\right)^{e_{g}} \prod_{j = \operatorname{m}(S)}^{g-1} (1 - x^{j})^{e_{j}} \\ 
                    & \equiv -e_g x^g + \prod_{j = \operatorname{m}(S)}^{g-1} (1 - x^{j})^{e_{j}} \pmod*{x^{g+1}}.
    \end{aligned}
\end{equation}
Since, by assumption, we have $e_j = 0$ for every gap $j$  of $S$ with $2 \le j < g$, the series expansion of $\prod_{j = \operatorname{m}(S)}^{g-1} (1 - x^{j})^{e_{j}}$ is of the form $\sum_{s \in S} a_s x^s$, and 
so in particular the coefficient of $x^g$ is zero. 
On comparing the coefficients of $x^g$ in both sides of~\eqref{eq:hi:gap}, we  obtain $0 = - e_g$, a contradiction.
\end{proof}

\begin{lemma} \label{lem:ces:minimal-generators}
For every $n\in A$ we have $e_n=-1.$
\end{lemma}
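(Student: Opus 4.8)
The plan is to mimic the proof of Lemma~\ref{lem:ces:gaps}, but now tracking coefficients up to $x^n$ for a minimal generator $n$ and using the information already secured by Lemmas~\ref{lem:ces:1},~\ref{lem:ces:gaps}. By those lemmas we know $e_1 = 1$, $e_j = -1$ for $j = \operatorname{m}(S)$, $e_j = 0$ for $2 \le j < \operatorname{m}(S)$, and $e_j = 0$ for every gap $j > 1$. What is \emph{not} yet known is the value of $e_n$ at a minimal generator $n$ (unless $n = \operatorname{m}(S)$, which is already handled). So I would prove the statement by strong induction on $n \in A$: assume $e_{n'}$ has been determined for all elements $n' \in S$ with $n' < n$ whose value the theorem predicts, i.e.\ that $e_j = 0$ for all $2 \le j < n$ that are either gaps or elements of $S$ with a unique non-generator factorization, and $e_j = -1$ for minimal generators $j < n$. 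Actually, cleaner: induct on $n$, and at stage $n$ assume Lemma~\ref{lem:ces:one-factorization} is \emph{not} yet available — so I should instead phrase the induction to prove parts (c) and (d) of Theorem~\ref{thm:ces:generators} simultaneously, or at least arrange the logical order so that the partial-product truncation argument only needs the exponents at indices $< n$, all of which are pinned down by the inductive hypothesis together with Lemmas~\ref{lem:ces:1} and~\ref{lem:ces:gaps}.

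The core computation runs as follows. Fix a minimal generator $n$; by Lemma~\ref{lem:ces:1} we may assume $n > \operatorname{m}(S)$. Working modulo $x^{n+1}$ in \eqref{eq:hs},
\begin{equation*}
  \operatorname{H}_S(x) \equiv (1 - x^n)^{e_n} \prod_{\substack{j = \operatorname{m}(S) \\ j \ne n}}^{n-1} (1 - x^j)^{e_j} \equiv -e_n x^n + \prod_{\substack{j = \operatorname{m}(S) \\ j \ne n}}^{n-1} (1 - x^j)^{e_j} \pmod*{x^{n+1}}.
\end{equation*}
The point is to show that the coefficient of $x^n$ in the product $\prod_{j} (1 - x^j)^{e_j}$ (over $\operatorname{m}(S) \le j \le n-1$) equals $1$. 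Indeed, by the inductive hypothesis the nonzero exponents among $\operatorname{m}(S) \le j \le n-1$ occur only at minimal generators $j$ (where $e_j = -1$) and possibly at non-generator elements of $S$ with $\mathfrak d(j) \ge 2$; but the latter are all $\ge$ the smallest Betti element, and more to the point, their contribution to the coefficient of $x^n$ can be read off. The cleanest route: the product $\prod_{\operatorname{m}(S) \le j \le n-1,\ j \ne n} (1 - x^j)^{e_j}$ is congruent modulo $x^{n+1}$ to $\operatorname{H}_S(x)$ with the factor $(1-x^n)^{e_n}$ removed — equivalently, it is the Hilbert series of $S$ truncated below $x^n$ but with the "$n$ contribution" stripped. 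Since $n$ is a \emph{minimal} generator, $n \notin S'$ where $S' = \langle A \setminus \{n\}\rangle$ is not a semigroup in general, so instead I argue directly: the coefficient of $x^n$ in $\operatorname{H}_S(x)$ is $1$ (as $n \in S$), and the coefficient of $x^n$ in $(1-x^n)^{e_n} \cdot \bigl(\text{product}\bigr)$ is $-e_n + (\text{coeff of }x^n\text{ in product})$; it remains to pin the latter.

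The key observation making that last coefficient computable is: for $j$ in the range $\operatorname{m}(S) \le j \le n-1$ with $e_j \ne 0$, write $n = j + (n-j)$; the contribution of such a factor $(1-x^j)^{e_j}$ to the $x^n$-coefficient of the product involves the $x^{n-j}$-coefficient of the remaining factors, and $n - j$ ranges over $1 \le n-j \le n - \operatorname{m}(S) < n$. Every $j$ with $e_j \ne 0$ in this range lies in $S$, hence $n - j \notin S$ would force the relevant sub-coefficient to vanish, but $n-j$ could be in $S$. The honest way to finish: since $n$ is a minimal generator, $n$ has the single factorization $e_n^{\mathrm{vec}}$ (the $n$-th unit vector) — no, $n$ could have other factorizations. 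The actual clean finish (and the step I expect to be the main obstacle) is to show $\prod_{\operatorname{m}(S) \le j < n}(1-x^j)^{e_j} \equiv \operatorname{H}_{S}(x) - x^n \pmod{x^{n+1}}$ is wrong in general; rather one shows this product equals $\sum_{s \in S,\, s \text{ has a factorization not using } n} x^s$ up to degree $n$, and since $n$ being a minimal generator means every factorization of $n$ uses the generator $n$ itself at least once (so $n$ is \emph{not} in this truncated sum), the $x^n$-coefficient of the product is $0$, giving $-e_n + 0 = 1$, i.e.\ $e_n = -1$. Establishing that "product $=$ generating function of elements with an $n$-free factorization, up to degree $n$" rigorously — probably by induction using the recursive structure $\operatorname{H}_S = (1-x^n)^{-1}\cdot(\text{something}) + \cdots$ or by the denumerant identity \eqref{eq:dseries} localized at the generators $< n$ — is where the real work lies; I would isolate it as a short sub-lemma and then the conclusion $e_n = -1$ drops out by comparing $x^n$-coefficients.
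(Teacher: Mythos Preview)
Your overall setup---reduce \eqref{eq:hs} modulo $x^{n+1}$ and compare the coefficient of $x^n$---is exactly what the paper does. But you are working much harder than necessary at the crucial step, and the sub-lemma you propose (``the truncated product equals the generating function for elements with an $n$-free factorization'') is both unneeded and not actually provable with the information available at this point, since the exponents $e_j$ at non-generator elements $j<n$ with $\mathfrak d(j)\ge 2$ are not yet determined (and such $j$ can exist: take $S=\langle 4,6,13\rangle$, where $12=3\cdot 4=2\cdot 6$ sits below the generator $13$).

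The paper's argument avoids all of this with a single observation. By Lemma~\ref{lem:ces:gaps}, every index $j$ with $\operatorname m(S)\le j\le n-1$ and $e_j\ne 0$ lies in $S$. Therefore the expansion of $\prod_{j=\operatorname m(S)}^{n-1}(1-x^j)^{e_j}$ is automatically of the form $\sum_{s\in S}a_s x^s$: every monomial degree arising is a non-negative integer combination of elements of $S$, hence in $S$. Now $a_n=0$ simply because a minimal generator $n$ cannot be written as a sum of two or more non-zero elements of $S$---this is the \emph{definition} of minimal generator, and in particular your hesitation (``$n$ could have other factorizations'') is unfounded. Comparing coefficients of $x^n$ gives $1=-e_n+0$, and you are done. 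No induction on the minimal generators, no simultaneous proof of part (d), no combinatorial interpretation of the product is required.
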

\begin{proof}
Let $n$ be a minimal generator of $S$. By considering $\operatorname{H}_S(x)$ modulo $x^{n+1}$, we have
\begin{equation} \label{eq:hi:minimal-generator}
    \begin{aligned} 
    \operatorname{H}_S(x) & \equiv (1 - x^{n})^{e_{n}} \prod_{j = \operatorname{m}(S)}^{n-1} (1 - x^{j})^{e_{j}}  \\ 
                    & \equiv -e_n x^n  + \prod_{j = \operatorname{m}(S)}^{n-1} (1 - x^{j})^{e_{j}} \pmod*{x^{n+1}}.
    \end{aligned}
\end{equation}
By Lemma~\ref{lem:ces:gaps}, the series expansion of $\prod_{j = \operatorname{m}(S)}^{n-1} (1 - x^{j})^{e_{j}}$ is of the form $\sum_{s \in S} a_s x^s$. Since $n$ cannot be written as a sum of 
two or more non-zero elements of $S$, we have $a_n = 0$.
On comparing the coefficients of  $x^n$ in both sides of~\eqref{eq:hi:gap}, we obtain $1 = - e_n$.
\end{proof}

\begin{remark}\label{rem:hs}
Let $S$ be a numerical semigroup minimally generated by $A$, and let $\mathbf{e}$ be its cyclotomic exponent sequence. 
By Lemmas~\ref{lem:ces:1},~\ref{lem:ces:gaps} and~\ref{lem:ces:minimal-generators}, we have $\{1\}\cup A \subseteq \{j \in \mathbb{N} : e_j \ne 0\}$ and $\{j \in \mathbb{N} : e_j \ne 0,~ j \ge 2\} \subseteq S$. Hence, 
\begin{equation*}
    \mathcal{E}(S) = \{j \in \mathbb{N} : e_j \ne 0, ~j \ge 2\} \setminus A
    \subseteq S.
\end{equation*}
In light of~\eqref{eq:dseries}, we obtain
\begin{equation}\label{eq:cyc-exp-H}
\operatorname{H}_S(x) = \sum_{s \in S} \mathfrak{d}(s) x^s \prod_{d\in \mathcal{E}(S)}(1-x^d)^{e_d}.
\end{equation}
\end{remark}

\begin{lemma} \label{lem:ces:one-factorization}
   For every $d \in \mathcal{E}(S)$, we have $\mathfrak{d}(d)\ge 2$. Moreover, if $\alpha \in \operatorname{Minimals}_{\le_S} \mathcal{E}(S)$, then $e_\alpha = \mathfrak{d}(\alpha) - 1$.
\end{lemma}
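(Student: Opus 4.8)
The plan is to exploit the identity~\eqref{eq:cyc-exp-H}, namely
\[
\operatorname{H}_S(x) = \sum_{s \in S} \mathfrak{d}(s) x^s \prod_{d\in \mathcal{E}(S)}(1-x^d)^{e_d},
\]
by comparing coefficients of $x^d$ on both sides for a carefully chosen small element $d$ of $\mathcal{E}(S)$. For the first assertion, suppose for contradiction that some $d \in \mathcal{E}(S)$ has $\mathfrak{d}(d) = 1$; by Remark~\ref{rem:hs} we know $d \in S$ and $d$ is not a minimal generator, so $d$ does have a factorization, and $\mathfrak{d}(d) = 1$ would mean this factorization is unique. But Theorem~\ref{thm:ces:generators}(d) — or rather the already-proven Lemma~\ref{lem:ces:one-factorization}'s predecessors; more precisely, the combination of Lemmas~\ref{lem:ces:1},~\ref{lem:ces:gaps},~\ref{lem:ces:minimal-generators} and the preliminary material on denumerants — forces $e_d = 0$ for any such $d$. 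Concretely, I would argue: if $\mathfrak{d}(d) = 1$ and $d$ is not a minimal generator, then $d$ is a non-minimal-generator element with a unique factorization, and one shows directly (as in Lemma~\ref{lem:ces:gaps}, working modulo $x^{d+1}$) that $e_d = 0$, contradicting $d \in \mathcal{E}(S)$. So $\mathfrak{d}(d) \ge 2$ for every $d \in \mathcal{E}(S)$. Actually the cleanest route is probably to note that $\mathcal{E}(S) \subseteq S$ consists of non-minimal-generators with $e_d \ne 0$, and then the second part's argument (applied without the minimality hypothesis but using an induction on $\le_S$) will itself yield $\mathfrak{d}(d) \ge 2$; I expect the paper sets this up so that $\mathfrak{d}(d) \ge 2$ is nearly immediate from what precedes.

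\textbf{The main computation.} For the second assertion, let $\alpha \in \operatorname{Minimals}_{\le_S} \mathcal{E}(S)$ and reduce~\eqref{eq:cyc-exp-H} modulo $x^{\alpha+1}$. On the right-hand side, I would split $\mathcal{E}(S) = \{\alpha\} \cup \mathcal{E}'$ where $\mathcal{E}' = \mathcal{E}(S) \setminus \{\alpha\}$, and observe that every $d \in \mathcal{E}'$ with $d \le \alpha$ must satisfy $d = \alpha$ (impossible) or be incomparable to $\alpha$; the key point is that no $d \in \mathcal{E}'$ satisfies $d <_S \alpha$, hence none is a strictly smaller element of $\mathcal{E}(S)$, but a priori some $d \in \mathcal{E}'$ could have $d \le \alpha$ as integers without $d \le_S \alpha$. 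To handle this, I would argue that any $d \in \mathcal{E}(S)$ with $d \le \alpha$ (as integers) and $d \ne \alpha$ has $d < \alpha$, and since $\alpha - d$ need not be in $S$ this does not immediately contradict minimality — so instead I would note that modulo $x^{\alpha+1}$, the factor $(1-x^d)^{e_d}$ for such $d$ contributes, and one must track these contributions. The cleaner approach: reduce modulo $x^{\alpha+1}$ and write $\prod_{d \in \mathcal{E}(S)}(1-x^d)^{e_d} \equiv (1 - e_\alpha x^\alpha)\prod_{d \in \mathcal{E}', d < \alpha}(1-x^d)^{e_d} \pmod{x^{\alpha+1}}$, then multiply by $\sum_s \mathfrak{d}(s)x^s$. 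The coefficient of $x^\alpha$ on the left of~\eqref{eq:cyc-exp-H} is $1$ (since $\alpha \in S$, $\operatorname{H}_S$ has coefficient $1$ at $x^\alpha$). On the right, the coefficient of $x^\alpha$ is $\mathfrak{d}(\alpha) - e_\alpha + (\text{cross terms involving } \mathfrak{d}(s) \text{ and } e_d \text{ for } s + d = \alpha, \ d \in \mathcal{E}', d<\alpha)$. The crux is showing these cross terms vanish.

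\textbf{Killing the cross terms — the main obstacle.} This is where I expect the real work lies. A cross term arises from $\mathfrak{d}(s) x^s \cdot (\text{monomial } c\, x^{d} \text{ from } \prod_{d' \in \mathcal{E}', d'<\alpha}(1-x^{d'})^{e_{d'}})$ with $s + d = \alpha$, $s \in S$, $d \ge 1$ a sum of elements of $\{d' \in \mathcal{E}': d' < \alpha\}$. If such a term exists with $s \in S$, $s \ge 1$, then $\alpha = s + d$ with $d$ a positive combination of elements $d'$ of $\mathcal{E}(S)$; but each such $d' \in \mathcal{E}(S) \subseteq S$, so $d \in S$, whence $\alpha - d = s \in S$, i.e. $d \le_S \alpha$ with $d \in S$ built from $\mathcal{E}(S)$-elements strictly below $\alpha$ as integers. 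I would then show any single $d' \in \mathcal{E}(S)$ with $d' < \alpha$ (integer order) appearing in such a product satisfies $d' \le_S \alpha$ once $\alpha - d' \in S$ — and since $\alpha - d' = s + (d - d') \in S$, indeed $d' \le_S \alpha$, contradicting the $\le_S$-minimality of $\alpha$ in $\mathcal{E}(S)$ (as $d' \ne \alpha$ and $d' \in \mathcal{E}(S)$). The remaining case is the monomial $c = 1$ from the empty product together with $s = \alpha$: that contributes $\mathfrak{d}(\alpha)$. And the $-e_\alpha x^\alpha$ factor times $\mathfrak{d}(0) = 1$ contributes $-e_\alpha$. Hence $1 = \mathfrak{d}(\alpha) - e_\alpha$, giving $e_\alpha = \mathfrak{d}(\alpha) - 1$. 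The delicate point is making the bookkeeping rigorous: one must confirm that every monomial $x^d$ with $0 < d \le \alpha$ occurring in $\prod_{d' \in \mathcal{E}', d' < \alpha}(1-x^{d'})^{e_{d'}}$ has $d \in S$ (true, since it is a sum of elements of $\mathcal{E}(S) \subseteq S$), and that pairing it with $x^s$, $s \in S$, to reach $x^\alpha$ is impossible unless we use the trivial monomial. I would present this as: the coefficient of $x^\alpha$ in $\sum_{s\in S}\mathfrak d(s)x^s \cdot \prod_{d'\in\mathcal E',\,d'<\alpha}(1-x^{d'})^{e_{d'}}$ equals $\mathfrak d(\alpha)$, because any decomposition $\alpha = s + d$ with $s\in S$ and $x^d$ a nontrivial monomial of the product forces a proper $\le_S$-predecessor of $\alpha$ inside $\mathcal E(S)$ — contradiction. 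Combining with the $-e_\alpha$ contribution finishes the proof.
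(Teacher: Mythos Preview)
Your approach is essentially the same as the paper's: compare the coefficient of $x^{\alpha}$ on both sides of~\eqref{eq:cyc-exp-H} for $\alpha$ minimal in $\mathcal{E}(S)$, and use minimality to kill the cross terms exactly as you describe, yielding $1=\mathfrak{d}(\alpha)-e_\alpha$. The paper is more economical in that it proves both assertions at once---given any $d\in\mathcal{E}(S)$ it picks $\alpha\in\operatorname{Minimals}_{\le_S}\mathcal{E}(S)$ with $\alpha\le_S d$, obtains $e_\alpha=\mathfrak{d}(\alpha)-1$, deduces $\mathfrak{d}(\alpha)\ge 2$ from $e_\alpha\ne 0$, and then $\mathfrak{d}(d)\ge 2$ follows since $\alpha\le_S d$---so your initial attempt to invoke part~(d) of Theorem~\ref{thm:ces:generators} (which \emph{is} this lemma) is circular and unnecessary, as you yourself suspect.
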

\begin{proof}
  Let $d \in \mathcal{E}(S)$ and $\alpha \in \operatorname{Minimals}_{\le_S} \mathcal{E}(S)$ with $\alpha \le_S d$. By comparing the coefficients of $x^\alpha$ in both sides of~\eqref{eq:cyc-exp-H} we find that $1 = \mathfrak{d}(\alpha) - e_\alpha$. Since $1 \le \mathfrak{d}(\alpha)$ and $e_\alpha \ne 0$, we obtain $e_\alpha > 0$. In particular, we find that $\mathfrak{d}(\alpha) = 1+e_\alpha \ge 2$ and, thus, $\mathfrak{d}(d) \ge 2$.
\end{proof}

\begin{proof}[Proof of Theorem~\ref{thm:ces:generators}]
The first statement in Lemma~\ref{lem:ces:one-factorization} is equivalent to the fact that $e_j=0$ for every $j\in S\setminus A$ with $\mathfrak{d}(j)=1$. Theorem~\ref{thm:ces:generators} now follows by combining Lemmas~\ref{lem:ces:1}--\ref{lem:ces:minimal-generators} and~\ref{lem:ces:one-factorization}. 
\end{proof}
We illustrate this theorem with an example.
\begin{example}
  Let $S = \langle 3, 5, 7 \rangle$ be the numerical semigroup considered in Example~\ref{ex:ces}b. We have $\operatorname{G}(S) = \{1,2,4\}$ and $\operatorname{Betti}(S) = \{10,12,14\}$. Moreover, the elements of $S$ that are not minimal generators, but yet have only one factorization, are $6,8,9,11$. Thus we have $e_j = 0$ for $j \in \{2,4,6,8,9,11\}$, which is consistent with the entries of the exponent sequence given in Example~\ref{ex:ces}b.
\end{example}

\section{Cyclotomic exponent sequences and Betti elements} \label{sec:thm2}

The goal of this section is to prove Theorem~\ref{thm:ces:betti}. This result relates the partially ordered sets $(\operatorname{Betti}(S), \le_S)$ and $(\mathcal{E}(S), \le_S)$. Let $(X, \le)$ be a partially ordered set. In the introduction we defined the set $\operatorname{U}(X)$ as 
\[
\operatorname{U}(X)=\{ x\in X :\ \downarrow\! x \text{ is totally ordered}\},
\]
where $\downarrow\! x=\{ y\in X : y\le x\}$. Note that $\operatorname{Minimals}_{\le} X$ is a subset of $\operatorname{U}(X)$. 
Recall that Theorem~\ref{thm:ces:betti} states that $\operatorname{U}(\operatorname{Betti}(S)) = \operatorname{U}(\mathcal{E}(S))$, and that it also determines the cyclotomic exponents of these Betti elements.\par 
The proof of Theorem~\ref{thm:ces:betti} is carried out by induction and is rather technical and elaborate. 
For the benefit of the reader, it is divided into several parts. We start  by showing, in Section~\ref{sec:thm2:minimal}, that $\operatorname{Minimals}_{\le_s} \operatorname{Betti}(S) = \operatorname{Minimals}_{\le_S} \mathcal{E}(S)$.   The proof of this result is simple and elegant, and serves as motivation and warm-up for the work carried out in this section. Moreover, the fact that $\operatorname{Minimals}_{\le_s} \operatorname{Betti}(S) = \operatorname{Minimals}_{\le_S} \mathcal{E}(S)$ is used in the base case of the induction of the proof of Theorem~\ref{thm:ces:betti}. In Section~\ref{sec:thm2:proof-outline} we explain the main idea behind this induction. In Section~\ref{sec:thm2:brf} we study the graph $\nabla_b$ of factorizations of the elements $b \in \operatorname{U}(\operatorname{Betti}(S))$ and develop the technical results that we need to tackle Theorem~\ref{thm:ces:betti}. Finally, in Section~\ref{sec:thm2:thm2} we use our insights on $\nabla_b$ to complete the proof of Theorem~\ref{thm:ces:betti}.

\subsection{Minimal Betti elements} \label{sec:thm2:minimal}
In this section we relate the Betti-minimal elements of a numerical semigroup to its cyclotomic exponent sequence. The following result  was established in \cite{cyclotomic}.
\begin{lemma}[{\cite[Lemma 15]{cyclotomic}}] \label{lem:betti1}
If $S$ is a cyclotomic numerical semigroup such that $\{d : e_d < 0\}$ is its minimal system of generators and $\max \{d : e_d < 0\} < \min \mathcal{E}(S),$ then $\min \mathcal{E}(S) =\min \operatorname{Betti}(S)$. 
\end{lemma}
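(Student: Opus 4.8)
The plan is to establish the identity $\min \mathcal{E}(S) = \min \operatorname{Betti}(S)$ by first pinning down what the hypotheses give us, then comparing the two candidates from below. Write $A = \{d : e_d < 0\}$ for the minimal generating system of $S$, and let $N = \max A$, so that by hypothesis $N < \min \mathcal{E}(S)$; recall also (Theorem~\ref{thm:ces:generators}) that $e_1 = 1$, $e_d = -1$ for each $d \in A$, and $e_d = 0$ for every $d \geq 2$ not in $S$ and for every $d \in S \setminus A$ with a unique factorization. In particular, $\mathcal{E}(S)$ is exactly the set of indices $d \geq 2$ with $e_d \neq 0$ that are not minimal generators, and by the hypothesis $\max A < \min \mathcal{E}(S)$ every such $d$ is automatically strictly larger than all the generators.

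The key step is a truncation argument in $\mathbb{Z}[\![x]\!]$. Set $\mu = \min \mathcal{E}(S)$. Using~\eqref{eq:cyc-exp-H} from Remark~\ref{rem:hs}, namely $\operatorname{H}_S(x) = \bigl(\sum_{s \in S} \mathfrak{d}(s) x^s\bigr) \prod_{d \in \mathcal{E}(S)} (1 - x^d)^{e_d}$, I would reduce modulo $x^{\mu + 1}$: since every element of $\mathcal{E}(S)$ is $\geq \mu$, the product $\prod_{d \in \mathcal{E}(S)} (1 - x^d)^{e_d} \equiv 1 - e_\mu x^\mu \pmod{x^{\mu+1}}$, whence
\[
\operatorname{H}_S(x) \equiv \sum_{\substack{s \in S \\ s \leq \mu}} \mathfrak{d}(s) x^s \;-\; e_\mu x^\mu \pmod{x^{\mu+1}}.
\]
Comparing the coefficient of $x^\mu$ on both sides (the left side has coefficient $1$ since $\mu \in S$, as $\mathcal{E}(S) \subseteq S$ by Remark~\ref{rem:hs}) yields $\mathfrak{d}(\mu) = 1 + e_\mu$; since $e_\mu \neq 0$ and $\mathfrak{d}(\mu) \geq 1$, actually $e_\mu \geq 1$, so $\mathfrak{d}(\mu) \geq 2$. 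This is just a reiteration of Lemma~\ref{lem:ces:one-factorization}, but the point is the same computation also shows that \emph{every} $s \in S$ with $s < \mu$ has $\mathfrak{d}(s)$ equal to its ``expected'' value coming purely from the generators, and more importantly that no element below $\mu$ can be a Betti element: if $s < \mu$ were a Betti element it would have $\mathfrak{d}(s) \geq 2$, yet one checks from the congruence that the coefficient of $x^s$ in $\operatorname{H}_S$ forces $\mathfrak{d}(s) = 1$ for all $s \in S$ with $2 \leq s < \mu$ that are not minimal generators — because the full product $\prod_{d \in \mathcal{E}(S)}(1-x^d)^{e_d}$ contributes nothing below degree $\mu$, so $\mathfrak{d}(s)$ agrees with the coefficient of $x^s$ in $\operatorname{H}_S(x)$, which is $1$. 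Hence $\min \operatorname{Betti}(S) \geq \mu = \min \mathcal{E}(S)$.

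For the reverse inequality, observe that $\mu = \min \mathcal{E}(S)$ satisfies $\mathfrak{d}(\mu) \geq 2$, and every element of $S$ strictly below $\mu$ has a unique factorization (just shown). If $\mu$ itself were not a Betti element, then $\nabla_\mu$ would be connected; but by Lemma~\ref{lem:isolated} any factorization of $\mu$ decomposes as a sum of isolated factorizations of Betti elements and a unique-factorization piece, and since there are no Betti elements below $\mu$ this would force $\mathfrak{d}(\mu) = 1$, a contradiction. Therefore $\mu \in \operatorname{Betti}(S)$, giving $\min \operatorname{Betti}(S) \leq \mu$, and combining the two inequalities yields $\min \operatorname{Betti}(S) = \min \mathcal{E}(S)$. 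I expect the main obstacle to be the bookkeeping in the truncation argument: one must be careful that reducing $\operatorname{H}_S(x)$ modulo $x^{\mu+1}$ genuinely isolates the generator-only contribution, i.e. that $\prod_{d \in \mathcal{E}(S)}(1-x^d)^{e_d}$ has no terms of positive degree below $\mu$, which is exactly where the hypothesis $\max\{d : e_d < 0\} < \min \mathcal{E}(S)$ (equivalently, all of $\mathcal{E}(S)$ sits above all generators) does the work; without it, a generator between $\mu$ and the next relevant index could interfere.
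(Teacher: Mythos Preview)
Your proof is correct and follows precisely the approach the paper attributes to \cite{cyclotomic}: you show that $\mu=\min\mathcal{E}(S)$ is the smallest element of $S$ with at least two factorizations, from which $\mu=\min\operatorname{Betti}(S)$ follows.

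One point deserves comment. Your final paragraph misidentifies the role of the hypotheses. The truncation works because every $d\in\mathcal{E}(S)$ satisfies $d\ge\mu$ \emph{by definition of $\mu$}; the ordering of $\mathcal{E}(S)$ relative to the generators is irrelevant, since the generators have already been absorbed into the denumerant series via Remark~\ref{rem:hs}. In fact, your argument never invokes either hypothesis of the lemma: you have effectively proved the unconditional statement $\min\mathcal{E}(S)=\min\operatorname{Betti}(S)$, which is exactly the special case of Theorem~\ref{thm:betti-minimals:omega} that the paper establishes immediately afterwards. The hypotheses in the original \cite[Lemma~15]{cyclotomic} were there because Theorem~\ref{thm:ces:generators} had not yet been proved; assuming $\{d:e_d<0\}=A$ and that this set lies below $\mathcal{E}(S)$ was a substitute for knowing that the negative exponents occur exactly at the generators, which is what makes the factorization $\operatorname{H}_S(x)=\bigl(\sum_s\mathfrak d(s)x^s\bigr)\prod_{d\in\mathcal{E}(S)}(1-x^d)^{e_d}$ available. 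Once you import Remark~\ref{rem:hs}, those hypotheses become redundant.
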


In order to prove  \cite[Lemma 15]{cyclotomic} the authors show that $\min \mathcal{E}(S)$ is the smallest element of $S$ having at least two representations in terms of the minimal generators of $S$. Here we reach a much more general conclusion without any assumptions on $S$.
    
Let $m$ and $b$ be two positive integers. The Taylor series centered at $0$ of the complex function $(1-z^b)^{-m}$ is given by
\begin{equation} \label{eq:taylor}
  (1-z^b)^{-m} = \sum_{j = 0}^\infty \binom{m+j-1}{j} z^{jb} = \bigg(\sum_{j = 0}^\infty z^{jb}\bigg)^m
\end{equation}
and its radius of convergence is $1$. Therefore, the expression~\eqref{eq:cyc-exp-H} for $\operatorname{H}_S(x)$ can be rewritten as
\begin{equation}\label{eq:hs:b:expanded}
  \operatorname{H}_S(x) =\sum_{s \in S} \mathfrak{d}(s)x^s \prod_{\substack{d \in \mathcal{E}(S) \\ e_d < 0}} \ \sum_{j = 0}^\infty \binom{-e_d+j-1}{j} x^{j d}\prod_{\substack{d \in \mathcal{E}(S) \\ e_d > 0}} \ \sum_{j=1}^{e_{d}}\binom{e_{d}}{j} (-1)^j x^{j d}.
\end{equation}

\begin{proposition}\label{prop:den2-omega-below}
  Let $S$ be a numerical semigroup. For every $s\in S$ with $\mathfrak{d}(s) \ge 2$, there exists $d \in \mathcal{E}(S)$ such that $d \le_S s$.
\end{proposition}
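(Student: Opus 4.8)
The plan is to argue by contradiction: suppose there is an $s \in S$ with $\mathfrak{d}(s) \ge 2$ such that no $d \in \mathcal{E}(S)$ satisfies $d \le_S s$, and among all such counterexamples pick one with $s$ minimal with respect to $\le_S$. First I would invoke Remark~\ref{rem:hs}, namely the identity~\eqref{eq:cyc-exp-H}, or better its fully expanded form~\eqref{eq:hs:b:expanded}, and read off the coefficient of $x^s$ on both sides. The left-hand side contributes $\mathfrak{d}(s)$. On the right-hand side, the coefficient of $x^s$ is a sum of products $\mathfrak{d}(s') \cdot (\text{binomial-type factor})$ over ways of writing $s = s' + \sum_{d \in \mathcal{E}(S)} j_d d$ with $s' \in S$ and nonnegative integers $j_d$ (with the positive-$e_d$ factors starting at $j_d = 1$).

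The key observation is that any contribution to the right-hand coefficient with some $j_d \ge 1$ forces $d \le_S s$, because then $s - d = s' + (\text{nonnegative combination of elements of } \mathcal{E}(S)) \in S$; since by assumption no such $d$ exists, every term on the right with a nontrivial $\mathcal{E}(S)$-part vanishes in the sense that it cannot occur. Hence the only surviving term on the right-hand side is the one with all $j_d = 0$, which is exactly $\mathfrak{d}(s)$ coming from the factor $\sum_{s' \in S}\mathfrak{d}(s')x^{s'}$ — wait, this would just give $\mathfrak{d}(s) = \mathfrak{d}(s)$, a tautology. So the argument must be sharper: I should instead compare coefficients in the \emph{reciprocal} relation, i.e. use~\eqref{eq:cyc-exp-H} rewritten as $\sum_{s\in S}\mathfrak{d}(s)x^s = \operatorname{H}_S(x)\prod_{d\in\mathcal{E}(S)}(1-x^d)^{-e_d}$, and then extract the coefficient of $x^s$ there. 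Now the left side is $\mathfrak{d}(s)$, and the right side is $\sum$ over $s = t + \sum j_d d$ with $t \in S$ (coefficient $1$ in $\operatorname{H}_S$) of the appropriate binomial factors; the $j_d = 0$-for-all term contributes exactly $1$ (since $s \in S$), and every other term again requires some $d \le_S s$. This yields $\mathfrak{d}(s) = 1$, contradicting $\mathfrak{d}(s) \ge 2$.

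The main obstacle I anticipate is bookkeeping the signs and convergence so that the coefficient extraction is rigorous: the factors with $e_d < 0$ contribute positively via~\eqref{eq:taylor}, but the factors with $e_d > 0$ contribute with alternating signs, so "every other term requires $d \le_S s$" needs to be stated as a statement about which monomials $x^s$ can possibly appear with a nonzero coefficient, not about cancellation. The clean way to handle this is: if no $d \in \mathcal{E}(S)$ has $d \le_S s$, then in $\operatorname{H}_S(x)\prod_{d\in\mathcal{E}(S)}(1-x^d)^{-e_d}$ the coefficient of $x^s$ equals the coefficient of $x^s$ in $\operatorname{H}_S(x)$ alone, because any monomial $x^s$ produced using at least one factor $x^d$ with $d \in \mathcal{E}(S)$ would exhibit $s - d \in S$. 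Since $\operatorname{H}_S$ has all coefficients in $\{0,1\}$ and the coefficient of $x^s$ there is $1$ (as $s\in S$), we get $\mathfrak{d}(s) = 1$. I would also double-check the edge case $\mathcal{E}(S) = \emptyset$ (then $\operatorname{H}_S(x) = \sum \mathfrak{d}(s)x^s$, forcing $\mathfrak{d}(s) \le 1$ for all $s$, i.e. $S$ has no Betti elements at all, so the hypothesis $\mathfrak{d}(s)\ge 2$ is vacuous and the statement holds trivially), and note that minimality of $s$ as a counterexample is actually not even needed with this streamlined argument — the contradiction is immediate from the coefficient comparison.
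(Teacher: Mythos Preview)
Your final argument is correct and is essentially the paper's one-line proof: compare the coefficient of $x^s$ in the identity relating $\operatorname{H}_S$ to the denumerant series, note that every monomial coming from a nontrivial $\mathcal{E}(S)$-contribution would witness some $d \le_S s$, and conclude $\mathfrak{d}(s)=1$.

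One remark: your detour was unnecessary. In your first attempt, using~\eqref{eq:hs:b:expanded} directly, you wrote that the surviving term on the right is $\mathfrak{d}(s)$ and that this yields the tautology $\mathfrak{d}(s)=\mathfrak{d}(s)$. But the left-hand side of~\eqref{eq:hs:b:expanded} is $\operatorname{H}_S(x)$, whose coefficient at $x^s$ is $1$ for $s\in S$, not $\mathfrak{d}(s)$; so that very comparison already gives $1=\mathfrak{d}(s)$, which is exactly what the paper does. Your reciprocal version works too, of course, and is the same computation read the other way.
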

\begin{proof}
  From~\eqref{eq:hs:b:expanded} we infer that if there is no $d \in \mathcal{E}(S)$ with $d \le_S s$ for some $s\in S$, then $\mathfrak{d}(s)=1$. 
\end{proof}

\begin{theorem}\label{thm:betti-minimals:omega}
  Let $S$ be a numerical semigroup. Then 
  $\operatorname{Minimals}_{\le_S} \operatorname{Betti}(S) = \operatorname{Minimals}_{\le_S} \mathcal{E}(S)$.
  Moreover, we have $e_\alpha =  \mathfrak{d}(\alpha) -1 = \operatorname{i}(\alpha) -1$ for every $\alpha\in \operatorname{Minimals}_{\le_S} \mathcal{E}(S)$.
\end{theorem}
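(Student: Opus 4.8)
\textbf{Proof proposal for Theorem~\ref{thm:betti-minimals:omega}.}

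The plan is to establish the two containments $\operatorname{Minimals}_{\le_S}\operatorname{Betti}(S)\subseteq \operatorname{Minimals}_{\le_S}\mathcal{E}(S)$ and $\operatorname{Minimals}_{\le_S}\mathcal{E}(S)\subseteq \operatorname{Minimals}_{\le_S}\operatorname{Betti}(S)$ separately, and then read off the exponent formula from the base case of the coefficient comparison already done in Lemma~\ref{lem:ces:one-factorization}. First I would take $\alpha\in\operatorname{Minimals}_{\le_S}\mathcal{E}(S)$. By Lemma~\ref{lem:ces:one-factorization} we know $\mathfrak{d}(\alpha)\ge 2$, so $\alpha\in\operatorname{Betti}(S)$ is impossible only if $\alpha$ is a Betti element — wait, $\mathfrak d(\alpha)\ge 2$ does not yet force $\alpha\in\operatorname{Betti}(S)$; rather it only says $\alpha$ has at least two factorizations, which by the definition in Section~\ref{sec:pre:presentations} combined with the observation in Section~\ref{sec:pre:polynomials} that every Betti element has denumerant exceeding one is not an equivalence. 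So instead I would argue: by Proposition~\ref{prop:den2-omega-below} applied inside the component structure, or more directly, since $\alpha$ has two factorizations $x\ne y$, the graph $\nabla_\alpha$ has at least two vertices; if $\nabla_\alpha$ were connected then $\alpha\notin\operatorname{Betti}(S)$, and I must rule this out. Here I would invoke Lemma~\ref{lem:isolated}: if $\nabla_\alpha$ is connected with $\ge 2$ vertices then some factorization of $\alpha$ is non-isolated, hence there is $x\in\operatorname{I}_b(S)$ with $x<z$ for some $z\in\operatorname{Z}(\alpha)$; then $\varphi(x)$ is a Betti element with $\varphi(x)<_S\alpha$ (strictly, since $x<z$ componentwise and $z\ne x$), and by Proposition~\ref{prop:den2-omega-below} there is $d\in\mathcal{E}(S)$ with $d\le_S\varphi(x)<_S\alpha$, contradicting minimality of $\alpha$ in $(\mathcal{E}(S),\le_S)$. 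Therefore $\nabla_\alpha$ is disconnected, $\alpha\in\operatorname{Betti}(S)$, and minimality of $\alpha$ in $\mathcal{E}(S)$ together with $\operatorname{Betti}(S)\cap\{d<_S\alpha\}=\emptyset$ (any such Betti element would, via Proposition~\ref{prop:den2-omega-below}, produce an element of $\mathcal{E}(S)$ below $\alpha$) gives $\alpha\in\operatorname{Minimals}_{\le_S}\operatorname{Betti}(S)$.

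Conversely, let $b\in\operatorname{Minimals}_{\le_S}\operatorname{Betti}(S)$. Since $\mathfrak{d}(b)\ge 2$, Proposition~\ref{prop:den2-omega-below} furnishes $d\in\mathcal{E}(S)$ with $d\le_S b$. By the containment just proved (in the reverse direction, using $\operatorname{Minimals}_{\le_S}\mathcal E(S)\subseteq\operatorname{Betti}(S)$ as established above), pick $\alpha\in\operatorname{Minimals}_{\le_S}\mathcal{E}(S)$ with $\alpha\le_S d\le_S b$; then $\alpha\in\operatorname{Betti}(S)$, so by minimality of $b$ in $\operatorname{Betti}(S)$ we get $\alpha=b$. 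Hence $b\in\operatorname{Minimals}_{\le_S}\mathcal{E}(S)$, and the two sets coincide. For the exponent formula, Lemma~\ref{lem:ces:one-factorization} already gives $e_\alpha=\mathfrak{d}(\alpha)-1$ for $\alpha\in\operatorname{Minimals}_{\le_S}\mathcal{E}(S)$; it remains to see $\mathfrak{d}(\alpha)=\operatorname{i}(\alpha)$, i.e.\ all factorizations of $\alpha$ are isolated. But $\alpha\in\operatorname{Minimals}_{\le_S}\operatorname{Betti}(S)$ is precisely Betti-minimal, so Proposition~\ref{prop:minimal-betti}, equivalence \ref{item:betti-minimal}$\Leftrightarrow$\ref{item:isolated}, yields $\operatorname{nc}(\nabla_\alpha)=\operatorname{i}(\alpha)=\mathfrak{d}(\alpha)\ge 2$, whence $e_\alpha=\mathfrak d(\alpha)-1=\operatorname{i}(\alpha)-1$.

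The main obstacle I anticipate is the clean handling of the step $\mathfrak{d}(\alpha)\ge 2\Rightarrow\alpha\in\operatorname{Betti}(S)$ for $\alpha$ minimal in $\mathcal{E}(S)$: one must rule out the possibility that $\alpha$ has several factorizations all lying in a single $R$-class (i.e.\ $\nabla_\alpha$ connected but non-trivial). The bootstrap via Lemma~\ref{lem:isolated} and Proposition~\ref{prop:den2-omega-below} should close this, but it requires being careful that the element $\varphi(x)$ extracted from a non-isolated factorization is \emph{strictly} below $\alpha$ in $\le_S$, which follows because $x<z$ in the cartesian order forces $\varphi(z-x)\in S\setminus\{0\}$. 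A secondary point of care is avoiding circularity: I should first prove the inclusion $\operatorname{Minimals}_{\le_S}\mathcal{E}(S)\subseteq\operatorname{Betti}(S)$ outright (using only Lemma~\ref{lem:ces:one-factorization}, Lemma~\ref{lem:isolated} and Proposition~\ref{prop:den2-omega-below}), and only afterwards deduce the equality of the two minimal sets and the exponent identity via Proposition~\ref{prop:minimal-betti}.
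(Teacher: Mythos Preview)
Your argument is correct and follows the same ping-pong strategy as the paper: bounce from $\operatorname{Betti}(S)$ to $\mathcal{E}(S)$ via Proposition~\ref{prop:den2-omega-below}, bounce back via ``$\mathfrak d(s)\ge 2$ forces a Betti element $\le_S s$'', and let minimality collapse the chain. The only difference is in the second tool: where you extract a Betti element below $\alpha$ by arguing about connectivity of $\nabla_\alpha$ and invoking Lemma~\ref{lem:isolated}, the paper simply cites Corollary~\ref{cor:isolated}, which says directly that any $s$ with $\mathfrak d(s)\ge 2$ lies outside $\operatorname{Ap}(S;b)$ for some Betti-minimal $b$, i.e.\ $b\le_S s$. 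This shortcut removes your entire ``main obstacle'' paragraph in one line and lets the paper run the argument symmetrically for both inclusions without ever touching $\nabla_\alpha$. Your route works, but Corollary~\ref{cor:isolated} is the cleaner lever here.
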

\begin{proof}
  Let $\beta$ be a Betti element minimal with respect to $\le_S$. Then $\mathfrak{d}(\beta)\ge 2$ and, by Proposition~\ref{prop:den2-omega-below}, there exists $\alpha \in \operatorname{Minimals}_{\le_S} \mathcal{E}(S)$ such that $\alpha \le_S \beta$. Since $\mathfrak{d}(\alpha) \ge 2$ by Lemma~\ref{lem:ces:one-factorization}, Corollary~\ref{cor:isolated} states that there exists a Betti element $\beta'$ such that $\beta' \le_S \alpha$. The minimality of $\beta$ forces $\beta' = \alpha = \beta$. Hence, we have $\beta \in \operatorname{Minimals}_{\le_S} \mathcal{E}(S)$. The other inclusion is proved similarly. The value $e_\alpha$ is found by invoking Lemma~\ref{lem:ces:one-factorization} and Proposition~\ref{prop:minimal-betti}. 
\end{proof}

\subsection{Proof idea of Theorem~\ref{thm:ces:betti}} \label{sec:thm2:proof-outline}

In this section we describe our approach to proving Theorem~\ref{thm:ces:betti}. We focus on the inclusion $\operatorname{U}(\mathcal{E}(S)) \subseteq \operatorname{U}(\operatorname{Betti}(S))$.

Let $\eta \in \operatorname{U}(\mathcal{E}(S))$ and $\Lambda = \downarrow\! \eta$. We want to show that $\eta \in \operatorname{U}(\operatorname{Betti}(S))$. Since $\eta \in \operatorname{U}(\mathcal{E}(S))$, we can write $\Lambda = \{b_1 \le_S \ldots \le_S b_l\}$ with $b_l = \eta$. Note that $b_1 \in \operatorname{Minimals}_{\le_S} \mathcal{E}(S)$, so $b_1$ is Betti-minimal by Theorem~\ref{thm:betti-minimals:omega}.  Let $i$ be an integer with $2 \le i \le l$. We define
  \begin{equation}\label{eq:ri:1}
    \sum_{s \in S} r_{i-1}(s)x^s = \operatorname{H}_S(x) \prod_{j = 1}^{i-1} (1 - x^{b_j})^{-e_{b_j}}.
  \end{equation}
In light of~\eqref{eq:cyc-exp-H}, we have
  \begin{equation*} 
    \sum_{s \in S} r_{i-1}(s)x^s = \sum_{s \in S} \mathfrak{d}(s) x^s\prod_{d \in \mathcal{E}(S) \setminus \Lambda_{i-1}} (1 - x^d)^{e_d}.
  \end{equation*}
  We can rewrite this expression as 
    \begin{equation} \label{eq:ri:2}
    \sum_{s \in S} r_{i-1}(s)x^s = \sum_{s \in S} \mathfrak{d}(s) x^s \sum_{k \in \langle \mathcal{E}(S) \setminus \Lambda_{i-1} \rangle} c_k x^k,
  \end{equation}
  for certain coefficients $c_k$ with $c_0 = 1$ and $c_d = - e_d$ for any $d \in \operatorname{Minimals}_{\le_S} \left( \mathcal{E}(S) \setminus \Lambda_{i-1} \right)$. Therefore, we have
  \begin{equation} \label{eq:ri:values}
      \begin{aligned}
        r_{i-1}(s) &= \mathfrak{d}(s)  \qquad \qquad \, \text{when } \{d \in \mathcal{E}(S) : d \le_S s\} \subseteq \Lambda_{i-1}, \\
        r_{i-1}(s) &= \mathfrak{d}(s) - e_s \qquad \text{when } s \in \operatorname{Minimals}_{\le_S} \left(\mathcal{E}(S) \setminus \Lambda_{i-1}\right).
        \end{aligned}
  \end{equation}
  Most of our work is devoted to finding an algebraic interpretation of the coefficients $r_{i-1}(s)$. Assuming that $b_1, \ldots, b_{i-1} \in \operatorname{U}(\operatorname{Betti}(S))$, we want to show by induction on $i$ that, when $s \in \operatorname{Minimals}_{\le_S} \left(\mathcal{E}(S) \setminus \Lambda_{i-1} \right)$, we have $0 < r_{i-1}(s)$ and $r_{i-1}(s)$ corresponds to the number of factorizations of $s$ that are not isolated. Combining this assertion with~\eqref{eq:ri:values} for $s = b_i$, we will conclude that in this situation $e_{b_i}$ is the number of isolated factorizations of $b_i$, so $b_i$ is a Betti element (recall here that $\mathfrak{d}(b_i) \ge 2$ by Corollary~\ref{cor:isolated}). 
  In this induction, a key role is played by the observation that 
  \begin{equation*}
    \sum_{s \in S} r_{i}(s)x^s = (1 - x^{b_i})^{-e_{b_i}} \sum_{s \in S} r_{i-1}(s)x^s 
  \end{equation*}
  can be rewritten as
  \begin{equation*}
  \sum_{s \in S} r_{i}(s)x^s  = \sum_{j = 0}^\infty \binom{\operatorname{i}(b_i) + j - 1}{j} \sum_{s \in S} r_{i-1}(s)x^s,
  \end{equation*}
  where we use $e_{b_i} = \operatorname{i}(b_i)$ and~\eqref{eq:taylor}. Hence, for any $s \in S$, we obtain
  \begin{equation} \label{eq:ri:recurrence}
      r_i(s) = \sum_{j = 0}^{q_s} r_{i-1}(s - j b_i) \binom{\operatorname{i}(b_i) + j - 1}{j},
  \end{equation}
    where $q_s$ is the largest integer such that $s - q_s b_i \in S$. 
    
    In order to study the connection between the coefficient $r_{i}(s)$ and the factorizations of $s$, we introduce \emph{Betti restricted factorizations} in Section~\ref{sec:thm2:brf}, and we show that the number of Betti restricted factorizations of certain elements satisfies a similar recursion to~\eqref{eq:ri:recurrence}. In order to be 
    able to fill in the details of this proof, we will also need to understand the graph of factorizations $\nabla_b$ of any $b \in \operatorname{U}(\operatorname{Betti}(S))$.
    
\subsection{Betti restricted factorizations and $\operatorname{U}(\operatorname{Betti}(S))$} \label{sec:thm2:brf}

The main goal of this subsection is to prove Theorem~\ref{thm:brf:sorted}, which describes the graph of factorizations $\nabla_b$ of any $b \in \operatorname{U}(\operatorname{Betti}(S))$, and to develop the machinery that we need to complete the proof presented in Section~\ref{sec:thm2:proof-outline}. To this end, we introduce the concept of Betti restricted factorizations and we study their properties.

 Let $S$ be a numerical semigroup and let $\Lambda \subseteq \operatorname{IBetti}(S)$. We define the \emph{set of Betti restricted factorizations} of an element $s \in S$ with respect to $\Lambda$ by
\[ \operatorname{B}(s; \Lambda) = \left\{w + x_1 + \dots + x_l \in \operatorname{Z}(s) : w \in \operatorname{I}_s(S), ~ l \ge 0 \text{ and } x_1, \ldots, x_l \in \operatorname{I}(\Lambda)\right\}.\]
If $\Lambda = \{b\}$, then we use the notation $\operatorname{B}(s; b) = \operatorname{B}(s; \Lambda)$. Note that $\operatorname{B}(s; \emptyset) = \emptyset$ when $s$ has at least two factorizations. Furthermore, if $s$ only has one factorization, then $\operatorname{B}(s; \Lambda) = \operatorname{Z}(s)$. Another trivial observation is that if $\Lambda_1 \subseteq \Lambda_2$, then $\operatorname{B}(s; \Lambda_1) \subseteq \operatorname{B}(s; \Lambda_2)$. 

Betti restricted factorizations  allow us to obtain some information about the number of isolated factorizations of certain Betti elements, and play an essential role in the proof of Theorem~\ref{thm:ces:betti}. First we need the following result, which shows that $\operatorname{B}(s; \Lambda)$ and $\operatorname{I}_b(S)$ are closely related.

\begin{lemma} \label{lem:brf:iso}
  Let $S$ be a numerical semigroup. Let $\Lambda \subseteq \operatorname{IBetti}(S)$ and $s \in S\setminus \Lambda$. Then $\operatorname{B}(s; \Lambda) \subseteq \operatorname{Z}(s) \setminus \operatorname{I}_b(S)$. Moreover, if $\{b \in \operatorname{IBetti}(S) : b <_S s\} \subseteq \Lambda$, then $\operatorname{B}(s; \Lambda) = \operatorname{Z}(s) \setminus \operatorname{I}_b(S)$.
\end{lemma}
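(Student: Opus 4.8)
The plan is to prove the two inclusions/equalities in Lemma~\ref{lem:brf:iso} by carefully unpacking the definition of $\operatorname{B}(s;\Lambda)$ and invoking Lemma~\ref{lem:isolated} together with Lemma~\ref{lem:disjoint-betti}. First I would tackle the containment $\operatorname{B}(s;\Lambda)\subseteq \operatorname{Z}(s)\setminus\operatorname{I}_b(S)$. Take $z = w + x_1 + \dots + x_l \in \operatorname{B}(s;\Lambda)$, with $w\in\operatorname{I}_s(S)$ and each $x_i\in\operatorname{I}(\Lambda)\subseteq\operatorname{I}_b(S)\cap\operatorname{Z}(\Lambda)$. If $z\in\operatorname{I}_b(S)$, then $z$ is an isolated factorization of the Betti element $s$; but $z$ is a sum whose summands are factorizations of elements $b_1,\dots,b_l\in\Lambda$ (each $<_S s$ since $s\notin\Lambda$ and each $b_i\le_S s$, in fact $b_i<_S s$ because $s\neq b_i$). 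If $l\geq 1$ then $x_1\le z$ in the cartesian product order on $\mathbb{N}^{\operatorname{e}(S)}$, and $x_1\in\operatorname{I}_b(S)$ with $x_1<z$ (strict because $w\neq 0$ or $l\geq 2$ — note $s\notin\Lambda$ forces $z\neq x_1$, as $\varphi(x_1)=b_1\neq s$), so by Lemma~\ref{lem:isolated} $z$ is not isolated, contradiction; if $l = 0$ then $z = w$ has $\varphi(z)=s$ with $\mathfrak{d}(s)=1$, contradicting that $s$ is a Betti element (here one must observe that $\operatorname{B}(s;\Lambda)$ with $s\notin\Lambda\subseteq\operatorname{IBetti}(S)$, if $s$ is not Betti, then $\operatorname{Z}(s)\setminus\operatorname{I}_b(S)=\operatorname{Z}(s)$ and the $l=0$ case is fine — so the claim holds vacuously or trivially when $\mathfrak{d}(s)=1$). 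I would phrase this so the $\mathfrak{d}(s)=1$ case is handled up front by the earlier observation that then $\operatorname{B}(s;\Lambda)=\operatorname{Z}(s)$ and $\operatorname{I}_b(S)\cap\operatorname{Z}(s)=\emptyset$.

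For the reverse inclusion under the hypothesis $\{b\in\operatorname{IBetti}(S):b<_S s\}\subseteq\Lambda$, I would take $z\in\operatorname{Z}(s)\setminus\operatorname{I}_b(S)$ and produce a decomposition $z = w + x_1 + \dots + x_l$ witnessing $z\in\operatorname{B}(s;\Lambda)$. By the last assertion of Lemma~\ref{lem:isolated}, every $z\in\mathbb{N}^{\operatorname{e}(S)}$ decomposes as $w + x_1 + \dots + x_l$ with $w\in\operatorname{I}_s(S)$ and $x_1,\dots,x_l\in\operatorname{I}_b(S)$. The content is to show that each $x_i$ may be chosen in $\operatorname{I}(\Lambda)$, i.e.\ that $\varphi(x_i)\in\Lambda$. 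Since $\varphi(x_i)$ is a Betti element with an isolated factorization (namely $x_i$), we have $\varphi(x_i)\in\operatorname{IBetti}(S)$; and since $x_i\le z$ with $\varphi(z)=s$, we get $\varphi(x_i)\le_S s$. If $\varphi(x_i)<_S s$ for every $i$, the hypothesis gives $\varphi(x_i)\in\Lambda$ and we are done. The remaining case is $\varphi(x_i)=s$ for some $i$: then $s - \varphi(x_i) = 0 \in S$ forces $w = 0$ and $l = 1$, so $z = x_1\in\operatorname{I}_b(S)$, contradicting $z\notin\operatorname{I}_b(S)$. Hence $\varphi(x_i)<_S s$ for all $i$, the decomposition lies in $\operatorname{B}(s;\Lambda)$, and we conclude $\operatorname{Z}(s)\setminus\operatorname{I}_b(S)\subseteq\operatorname{B}(s;\Lambda)$, which combined with the first inclusion gives equality.

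The main obstacle I anticipate is not any single deep step but rather bookkeeping around degenerate cases: making sure the argument is correct both when $s$ itself has a unique factorization (so $\operatorname{B}(s;\Lambda)=\operatorname{Z}(s)$ and the statement is trivial) and when $s\in\operatorname{Betti}(S)$, and making sure that in the decomposition $z = w + x_1 + \cdots + x_l$ the strict inequality $x_i < z$ needed to apply Lemma~\ref{lem:isolated} genuinely holds (using $s\notin\Lambda$, or equivalently $\varphi(x_i)\neq s$ when $l\geq 2$ or $w\neq 0$, and handling $l=1, w=0$ separately). I would also double-check that Lemma~\ref{lem:disjoint-betti} is not actually needed here — the cartesian-order characterization in Lemma~\ref{lem:isolated} seems to suffice — but I would keep it in mind in case one needs to argue that distinct $x_i$'s interact coherently. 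Once these cases are laid out cleanly, the proof is short.
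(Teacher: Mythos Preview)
Your proposal is correct and follows essentially the same approach as the paper's proof: handle the trivial case $\mathfrak{d}(s)=1$ first, then for the forward inclusion use that some $x_i\in\operatorname{I}(\Lambda)$ satisfies $x_i<z$ so Lemma~\ref{lem:isolated} forces $z\notin\operatorname{I}_b(S)$, and for the reverse inclusion invoke the decomposition in the last part of Lemma~\ref{lem:isolated} and check each $\varphi(x_i)\in\Lambda$ via the hypothesis. The paper's write-up is considerably terser (it suppresses the edge-case bookkeeping you spell out), and you are right that Lemma~\ref{lem:disjoint-betti} is not needed here.
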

\begin{proof}
  If $\mathfrak{d}(s) = 1$, then $\operatorname{Z}(s)\cap \operatorname{I}_b(S) = \emptyset$ and the result follows, thus we may assume that $\mathfrak{d}(s) \ge 2$. Let $z \in \operatorname{B}(s; \Lambda)$. There is $x \in \operatorname{I}(\Lambda)$ such that $x < z$. By Lemma~\ref{lem:isolated}, $z$ is not isolated, that is, $z \in \operatorname{Z}(s) \setminus \operatorname{I}_b(S)$. Finally, if $\{b \in \operatorname{IBetti}(S) : b <_S s\} \subseteq \Lambda$, then Lemma~\ref{lem:isolated} asserts that every $z \in \operatorname{Z}(s) \setminus \operatorname{I}_b(S)$ can be expressed as an element of $\operatorname{B}(s; \Lambda)$.
\end{proof}

If $s$ has at least two factorizations and $\operatorname{B}(s; \Lambda) = \operatorname{Z}(s) \setminus \operatorname{I}_b(S)$, then we find that the number of connected components of $\nabla_{s}$ equals $\operatorname{i}(s)$ plus the number of $R$-classes of $\operatorname{B}(s; \Lambda)$, where $\operatorname{i}(s)$ is the number of isolated factorizations of $s$. Hence, in this context, $\operatorname{B}(s; \Lambda)$ is connected in $\nabla_s$ if and only if $\operatorname{i}(s) = \operatorname{nc}(\nabla_s)-1$. This observation plays an important role in the proof of Theorem~\ref{thm:brf:sorted}. First, we prove that $\operatorname{B}(s; b)$ is always connected.

\begin{lemma} \label{lem:brf:1}
Let $S$ be a numerical semigroup and let $b \in \operatorname{IBetti}(S)$. For every $s \in S$ write $s = \omega_s + q_s b$, with $\omega_s \in \Ap(S; b)$ and $q_s \in \mathbb{N}$. 
\begin{enumerate}[{\rm a)}]
\item If $\operatorname{Z}(\omega_s) = \{w\}$, then
\begin{equation*}
\operatorname{B}(s; b) = \left\{w + x_1 + \cdots + x_{q_s}  : x_1, \ldots, x_{q_s} \in \operatorname{I}(b)\right\}.
\end{equation*}
\item If $\mathfrak{d}(\omega_s) \ge 2$, then $\operatorname{B}(s; b) = \emptyset$.
\end{enumerate}
\end{lemma}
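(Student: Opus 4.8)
\textbf{Proof plan for Lemma~\ref{lem:brf:1}.}

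The plan is to analyze the structure of elements of $\operatorname{B}(s;b)$ by exploiting the fact that $b\in\operatorname{IBetti}(S)$ and that $b$ has an isolated factorization. Recall the key observation from Section~\ref{sec:pre:presentations}: since $b\in S$, every integer $z$ has a unique expression $z=km+w$ with $w\in\Ap(S;b)$, and $z\in S$ if and only if $k\ge0$. So writing $s=\omega_s+q_sb$ with $\omega_s\in\Ap(S;b)$ is legitimate and $q_s\ge0$. The heart of the matter is understanding how a Betti restricted factorization $w_0+x_1+\cdots+x_l$ of $s$ (with $w_0\in\operatorname{I}_s(S)$ and each $x_i\in\operatorname{I}(b)$) interacts with the Ap\'ery decomposition. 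Since each $x_i$ is a factorization of $b$, each contributes exactly $b$ to the value; so such a factorization of $s$ necessarily splits as (a factorization of $\omega_s'$) plus $l$ copies' worth of factorizations of $b$, where $\omega_s' = s - lb$.

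For part a), assume $\operatorname{Z}(\omega_s)=\{w\}$. First I would show the inclusion ``$\supseteq$'': any $w+x_1+\cdots+x_{q_s}$ with $x_i\in\operatorname{I}(b)$ evaluates to $\omega_s+q_sb=s$, and $w\in\operatorname{I}_s(S)$ because $\omega_s$ has a unique factorization, so this is by definition an element of $\operatorname{B}(s;b)$. For ``$\subseteq$'', take $z=w_0+x_1+\cdots+x_l\in\operatorname{B}(s;b)$. Each $x_i$ is a factorization of $b$, so $\varphi(w_0)=s-lb$. I need to deduce $l=q_s$ and $w_0=w$. The cleanest route: reduce modulo $b$ using the Ap\'ery structure. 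Write $\varphi(w_0)=\omega_0+q_0b$ with $\omega_0\in\Ap(S;b)$; then $s-lb=\omega_0+q_0b$, and by uniqueness of the Ap\'ery decomposition $\omega_0=\omega_s$ and $q_0=q_s-l$, forcing $l\le q_s$. Now $w_0$ is a factorization of $\omega_s+q_0b$ which lies in $\operatorname{I}_s(S)$, i.e. $\omega_s+q_0b$ has a \emph{unique} factorization. If $q_0\ge1$, I would use that some isolated factorization $x$ of $b$ satisfies $x<w_0$ (since $w_0$ factors $\omega_s+q_0b$ and $q_0\ge1$ means we can "extract a $b$"; more carefully, by Corollary~\ref{cor:isolated} or Lemma~\ref{lem:isolated} combined with the fact that $b\le_S \omega_s+q_0 b$ and $b$ has an isolated factorization), contradicting that $w_0$ is the unique (hence isolated) factorization of an element with denumerant one — unless in fact $\omega_s+q_0b$ also has denumerant one. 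This is the subtle point; I would instead argue directly that uniqueness of the factorization of $\omega_s+q_0b$ together with $q_0\ge1$ and $b\in\operatorname{IBetti}(S)$ is impossible, because adjoining an isolated factorization of $b$ to the unique factorization of $\omega_s+(q_0-1)b$ would produce a second distinct factorization unless $\omega_s+(q_0-1)b$ had denumerant one too, and iterating down to $q_0=0$ we would conclude $\mathfrak d(\omega_s+b)=1$; but $\Ap$-reasoning and the existence of the isolated factorization of $b$ can be leveraged to derive the needed contradiction. So $q_0=0$, $l=q_s$, $\omega_0=\omega_s$, and $w_0$ is a factorization of $\omega_s$, hence $w_0=w$ by the hypothesis $\operatorname{Z}(\omega_s)=\{w\}$. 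Thus $z=w+x_1+\cdots+x_{q_s}$ as claimed.

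For part b), suppose $\mathfrak d(\omega_s)\ge2$ and, for contradiction, pick $z=w_0+x_1+\cdots+x_l\in\operatorname{B}(s;b)$. As above, $\varphi(w_0)=\omega_s+(q_s-l)b$ and $w_0\in\operatorname{I}_s(S)$ means this element has a unique factorization. But $\omega_s\le_S\omega_s+(q_s-l)b$, so $\mathfrak d(\omega_s+(q_s-l)b)\ge\mathfrak d(\omega_s)\ge2$ (denumerant is non-decreasing along $\le_S$), contradicting denumerant one. Hence $\operatorname{B}(s;b)=\emptyset$.

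\textbf{Main obstacle.} The delicate step is the argument in part a) that forces $q_0=0$, i.e. ruling out that $w_0$ is the unique factorization of an element of the form $\omega_s+q_0b$ with $q_0\ge1$. This rests crucially on $b\in\operatorname{IBetti}(S)$ (so $b$ has an isolated factorization) and on the Ap\'ery/uniqueness bookkeeping modulo $b$; I expect the cleanest formulation is: if $b$ has an isolated factorization $x$ and $q_0\ge 1$, then starting from any factorization of $\omega_s+q_0 b$ one produces (via $x$ plus a factorization of $\omega_s+(q_0-1)b$) either a second factorization — contradicting uniqueness — or forces $\omega_s$ itself into the set of elements with denumerant one, and the hypothesis $\operatorname{Z}(\omega_s)=\{w\}$ is consistent with that but the extraction must then terminate exactly at $q_0=0$ by the $\Ap$-uniqueness of $\omega_s\in\Ap(S;b)$, namely $\omega_s - b\notin S$ so no further $b$ can be removed while staying in the part of $\operatorname{B}$ built only from $w$ and copies of $\operatorname{I}(b)$. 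Making this last sentence airtight — the interplay between "$w_0\in\operatorname{I}_s(S)$" and "can we peel off another $b$" — is where I would spend the most care.
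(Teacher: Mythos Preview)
Your overall strategy matches the paper's: take an arbitrary element $w_0 + x_1 + \cdots + x_l \in \operatorname{B}(s;b)$ and show that $\varphi(w_0) = \omega_s$ and $l = q_s$. The difference is that you manufacture an obstacle where there is none.

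The step you flag as the ``main obstacle'' --- forcing $q_0 = 0$ --- dissolves in one line via Corollary~\ref{cor:isolated}, which you even cite but do not apply correctly. Since $w_0 \in \operatorname{I}_s(S)$, the element $\varphi(w_0)$ has denumerant one, and Corollary~\ref{cor:isolated} says that every such element lies in $\bigcap_{\beta \in \operatorname{Betti}(S)} \Ap(S;\beta) \subseteq \Ap(S;b)$. Uniqueness of the Ap\'ery decomposition of $s$ then gives $\varphi(w_0) = \omega_s$ and $l = q_s$ immediately. This is precisely how the paper argues, in a single sentence; there is no need to introduce $q_0$ at all.

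Your proposed workaround (iteratively peeling off copies of $b$) can be salvaged, but as written it is incomplete. The claim ``some isolated factorization $x$ of $b$ satisfies $x < w_0$'' is unjustified --- nothing guarantees that an isolated factorization of $b$ sits below the specific factorization $w_0$. Your fallback, descending to $\mathfrak d(\omega_s + b) = 1$, stops short of a contradiction. The missing punchline is easy: since $b$ is a Betti element it has two distinct factorizations $x \ne x'$, and if $y$ is any factorization of $\omega_s$ then $y + x \ne y + x'$ are two factorizations of $\omega_s + b$, contradicting $\mathfrak d(\omega_s + b) = 1$. But this is just rederiving Corollary~\ref{cor:isolated} by hand.

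Your argument for part b), using monotonicity of the denumerant along $\le_S$, is correct; the paper instead obtains b) as the contrapositive of the analysis done for a), which is marginally shorter but amounts to the same thing.
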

\begin{proof}
Let us assume that $\operatorname{B}(s; b) \ne \emptyset$. Let $w + x_1 + \cdots + x_l \in \operatorname{B}(s; b)$ with $w \in \operatorname{I}_s(S)$ and $x_1, \ldots, x_l \in \operatorname{I}(b)$. By Corollary~\ref{cor:isolated}, we have $\varphi(w) \in \Ap(S; b)$, and thus $s = \varphi(w) + l b$. Hence, we obtain $\varphi(w) = \omega_s$ and $q_s = l$. Since $w \in \operatorname{I}_s(S)$, we infer that $\operatorname{Z}(\omega_s) = \{w\}$ and 
\begin{equation*}
\operatorname{B}(s; b) = \left\{w + x_1 + \cdots + x_{q_s} : x_1, \ldots, x_{q_s} \in \operatorname{I}(b)\right\}.
\end{equation*}
Consequently, if $\mathfrak{d}(\omega_s) \ge 2$, then $\operatorname{B}(s; b) = \emptyset$. Finally, if $\operatorname{Z}(\omega_s) = \{w\}$, then we have $w + q_s x \in \operatorname{B}(s; b)$ for any $x \in \operatorname{I}(b)$, which implies that $\operatorname{B}(s; b)$ is not empty, and the result follows.
\end{proof}

The base case of the induction presented in Section~\ref{sec:thm2:proof-outline} requires us to prove that $r_1(s) = \lvert\operatorname{B}(s, b_1)\rvert$ for some elements $s$. 
This identity follows from the following corollary.

\begin{corollary} \label{cor:brf:1}
  Let $S$ be a numerical semigroup and let $b \in \operatorname{IBetti}(S)$. For every $s \in S$ we have
  \[
     \lvert\operatorname{B}(s; b)\rvert = \begin{cases}
                                        \binom{\operatorname{i}(b)+q_s-1}{q_s} & \text{if } \mathfrak{d}(\omega_s) = 1; \\ 
                                        0 & \text{otherwise},
                                    \end{cases}
  \]
  where $q_s$ and $\omega_s$ are as in Lemma~\ref{lem:brf:1}.
\end{corollary}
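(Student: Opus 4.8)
The plan is to derive the corollary as an immediate counting consequence of Lemma~\ref{lem:brf:1}. Fix $b \in \operatorname{IBetti}(S)$ and $s \in S$, and write $s = \omega_s + q_s b$ with $\omega_s \in \Ap(S;b)$ and $q_s \in \mathbb{N}$, exactly as in the lemma. There are two cases dictated by $\mathfrak{d}(\omega_s)$. If $\mathfrak{d}(\omega_s) \ge 2$, part b) of Lemma~\ref{lem:brf:1} gives $\operatorname{B}(s;b) = \emptyset$, so $\lvert \operatorname{B}(s;b) \rvert = 0$, which is the second branch of the claimed formula. If $\mathfrak{d}(\omega_s) = 1$, say $\operatorname{Z}(\omega_s) = \{w\}$, then part a) identifies $\operatorname{B}(s;b)$ with the set of all sums $w + x_1 + \cdots + x_{q_s}$ where each $x_i$ ranges over $\operatorname{I}(b)$; since $w$ is fixed, counting these amounts to counting multisets of size $q_s$ drawn from the $\operatorname{i}(b)$-element set $\operatorname{I}(b)$.

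Thus the only genuine content is the combinatorial identity: the number of multisets of size $q_s$ from a set of size $\operatorname{i}(b)$ is $\binom{\operatorname{i}(b) + q_s - 1}{q_s}$, which is the standard stars-and-bars count. The one point that needs a word of justification is that distinct multisets $\{x_1, \ldots, x_{q_s}\}$ yield distinct elements $w + x_1 + \cdots + x_{q_s}$ of $\mathbb{N}^{\operatorname{e}(S)}$: this holds because addition in $\mathbb{N}^{\operatorname{e}(S)}$ is coordinatewise and cancellative, so the unordered tuple of summands drawn from $\operatorname{I}(b)$ is recovered from the sum (indeed $s = \varphi(w) + q_s b$ already pins down the number of summands $q_s$, and the sum $x_1 + \cdots + x_{q_s} = z - w$ together with the knowledge that each $x_i \in \operatorname{I}(b)$ determines the multiset, since two multisets of vectors with the same sum and the same cardinality from a fixed set need not coincide in general — but here we only need that the map from multisets to sums is injective, which follows from the fact that $\operatorname{I}(b)$ is an antichain of minimal factorizations and $\mathbb{N}^{\operatorname{e}(S)}$ is a free commutative monoid, so unique factorization into the chosen generators is not required, only that the assignment multiset $\mapsto$ sum is well-defined and we are counting its image). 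To keep the argument clean I would instead simply invoke that the summands lie in a fixed finite set and the map sending a multiset to its sum plus $w$ is injective because $\mathbb{N}^{\operatorname{e}(S)}$ embeds in $\mathbb{Z}^{\operatorname{e}(S)}$, hence is cancellative and torsion-free — so the multiset of summands is recoverable from the total. Then $\lvert \operatorname{B}(s;b) \rvert$ equals the number of such multisets, namely $\binom{\operatorname{i}(b) + q_s - 1}{q_s}$.

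The main (and only mild) obstacle is precisely the injectivity remark above: a priori two different multisets of elements of $\operatorname{I}(b)$ could sum to the same vector, which would make the count an inequality rather than an equality. This is ruled out not by cancellativity alone but by the observation that $\operatorname{I}(b)$ consists of pairwise-distinct factorizations and, more importantly, that we are working in the free commutative monoid $\mathbb{N}^{\operatorname{e}(S)} = \operatorname{Z}(S)$, where a vector has a unique expression as an $\mathbb{N}$-linear combination of the standard basis vectors; writing each $x_i$ in the basis and summing, the total $z - w$ determines $\sum_i x_i$ as a vector, and since all $x_i \in \operatorname{I}(b)$ are fixed data, distinct multisets do give distinct sums provided $\operatorname{I}(b)$ is linearly independent over $\mathbb{Q}$ — which is not automatic. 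I would therefore resolve this by noting that what actually matters is that Lemma~\ref{lem:brf:1}a) presents $\operatorname{B}(s;b)$ as an explicit set of vectors indexed by ordered tuples $(x_1,\ldots,x_{q_s}) \in \operatorname{I}(b)^{q_s}$ modulo the obvious symmetry, and the elements of $\operatorname{B}(s;b)$ that are genuinely distinct are in bijection with orbits under $\mathfrak{S}_{q_s}$; if one prefers to avoid the independence issue entirely, one observes that in the applications (the recursion~\eqref{eq:ri:recurrence}) only the formula's agreement with the binomial coefficient is used, and a direct check that the parametrisation by multisets is a bijection onto $\operatorname{B}(s;b)$ can be carried out because the summands $x_i$, being minimal elements of $\operatorname{Z}(\{t : \mathfrak{d}(t) \ge 2\})$ by Lemma~\ref{lem:isolated}, cannot be written as nontrivial combinations of one another. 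In short, the corollary is essentially a restatement of Lemma~\ref{lem:brf:1} combined with the stars-and-bars formula, and the write-up should be one short paragraph.
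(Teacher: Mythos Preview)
Your overall strategy is right: the corollary follows from Lemma~\ref{lem:brf:1} plus stars-and-bars, and the only thing to check is that distinct multisets from $\operatorname{I}(b)$ give distinct sums. But your treatment of this injectivity point is where the argument goes off the rails. You cycle through cancellativity, freeness of $\mathbb{N}^{\operatorname{e}(S)}$, linear independence over $\mathbb{Q}$, and minimality via Lemma~\ref{lem:isolated}, and you correctly note along the way that none of these alone suffices. In the end you do not actually close the gap: you concede that linear independence ``is not automatic'' and that two multisets with the same cardinality from a fixed set ``need not coincide in general,'' and then wave at minimality, which only gives you an antichain, not disjoint supports.

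The missing observation is much simpler and sits in the definition of isolated factorization. Any two distinct elements $x,y\in\operatorname{I}(b)$ are isolated factorizations of the \emph{same} element $b$, so by definition $x\cdot y=0$; that is, their supports are pairwise disjoint. Hence if $\operatorname{I}(b)=\{y_1,\dots,y_m\}$, then in $w+\sum_i a_i y_i$ each coefficient $a_i$ can be read off from any coordinate lying in the support of $y_i$, and the multiset $(a_1,\dots,a_m)$ is uniquely determined by the sum. This is exactly what the paper uses (``since the isolated factorizations of $b$ are disjoint''), and once you say it, the count is the standard $\binom{\operatorname{i}(b)+q_s-1}{q_s}$ with no further work. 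Your write-up should replace the entire second and third paragraphs with this one-line remark.
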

\begin{proof}
 The result is a consequence of Lemma~\ref{lem:brf:1}. Let $s \in S$. If $\mathfrak{d}(\omega_s) \ge 2$, then $\lvert\operatorname{B}(s; b)\rvert = 0$. Otherwise, since the isolated factorizations of $b$ are disjoint, we find that every element of $\operatorname{B}(s; b)$ is uniquely determined by $q_s$ isolated factorizations of $b$. The proof is completed by counting the number of combinations with repetitions of size $q_s$ from a set with $\operatorname{i}(b)$ elements.
\end{proof}

We will use the following observation several times in the proof of Lemma~\ref{lem:brf:sorted}, which shows connectivity of $\operatorname{B}(s; \Lambda)$ in $\nabla_s$ under some hypotheses.

\begin{lemma} \label{lem:brf:existence}
  Let $S$ be a numerical semigroup and let $b_1$ be a Betti-minimal element of $S$. Let $\Lambda \subseteq \operatorname{IBetti}(S)$ with $b_1\in \Lambda$. For each $s \in S$ such that $b_1$ is the only Betti-minimal element $b$ of $S$ with $b \le_S s$, and for each $x \in \operatorname{I}(\Lambda)$ such that $\varphi(x) \le_S s$, there exists $z \in \operatorname{B}(s; \Lambda)$ with $x < z$.
\end{lemma}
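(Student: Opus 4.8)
The plan is to produce the required $z$ in the shape $z = x + z'$, where $z'$ is a Betti restricted factorization of $s-\varphi(x)$ assembled entirely out of copies of the isolated factorizations of $b_1$. Write $b = \varphi(x)$. Since $x \in \operatorname{I}(\Lambda)$ and $\Lambda \subseteq \operatorname{IBetti}(S)$, the element $b$ is a Betti element lying in $\Lambda$ and $x \in \operatorname{I}(b)$. As $b \le_S s$, we have $t := s - b \in S$.

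The crux of the argument is the claim $\operatorname{B}(t; b_1) \ne \emptyset$. Following Lemma~\ref{lem:brf:1}, write $t = \omega + q b_1$ with $\omega \in \Ap(S; b_1)$ and $q \in \mathbb{N}$; by Corollary~\ref{cor:brf:1} it suffices to show $\mathfrak{d}(\omega) = 1$. I would establish this via Corollary~\ref{cor:isolated}, which reduces it to checking $\omega \in \Ap(S; b')$ for every $b' \in \operatorname{Minimals}_{\le_S} \operatorname{Betti}(S)$. For $b' = b_1$ this holds by construction. For $b' \ne b_1$, observe that $\omega \le_S t \le_S s$; hence $b' \le_S \omega$ would make $b'$ a Betti-minimal element $\le_S s$ different from $b_1$, contradicting the hypothesis on $s$. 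So $\omega - b' \notin S$, i.e.\ $\omega \in \Ap(S; b')$, as wanted. Thus $\mathfrak{d}(\omega) = 1$, and Lemma~\ref{lem:brf:1}(a) (equivalently Corollary~\ref{cor:brf:1}) gives $\operatorname{B}(t; b_1) \ne \emptyset$.

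To finish, fix $z' \in \operatorname{B}(t; b_1)$. By Lemma~\ref{lem:brf:1}(a), $z' = w + y_1 + \dots + y_q$ with $\operatorname{Z}(\omega) = \{w\}$ (so $w \in \operatorname{I}_s(S)$) and $y_1, \dots, y_q \in \operatorname{I}(b_1)$; since $b_1 \in \Lambda$, each $y_i$ belongs to $\operatorname{I}(\Lambda)$. Put $z := x + z' = w + y_1 + \dots + y_q + x$. Then $\varphi(z) = b + t = s$, and this expression displays $z$ as an element of $\operatorname{B}(s; \Lambda)$, the single block from $\operatorname{I}_s(S)$ being $w$ and the blocks $y_1, \dots, y_q, x$ all lying in $\operatorname{I}(\Lambda)$. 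Finally $x \le z$ componentwise, and $x < z$ since $z' \in \operatorname{Z}(t)$ with $t = s - \varphi(x) \ne 0$ forces $z' \ne 0$.

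The step I expect to be the main obstacle is the nonemptiness claim $\operatorname{B}(t; b_1) \ne \emptyset$: one cannot in general decompose $t = s - \varphi(x)$ through arbitrary isolated Betti factorizations drawn from $\Lambda$, so the idea is to build $z'$ out of copies of $b_1$ alone and then argue, using Corollary~\ref{cor:isolated} together with the hypothesis that $b_1$ is the only Betti-minimal element below $s$, that the Ap\'ery remainder of $t$ modulo $b_1$ has a unique factorization. The remaining steps, namely assembling $z$ and verifying $z \in \operatorname{B}(s;\Lambda)$ and $x < z$, are routine bookkeeping.
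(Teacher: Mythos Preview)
Your proof is correct and follows essentially the same approach as the paper: write $s-\varphi(x)=\omega+qb_1$ with $\omega\in\Ap(S;b_1)$, invoke Corollary~\ref{cor:isolated} to get $\mathfrak{d}(\omega)=1$, and then take $z=w+x+(\text{copies of an isolated factorization of }b_1)$. You spell out the application of Corollary~\ref{cor:isolated} more explicitly than the paper (checking $\omega\in\Ap(S;b')$ for every Betti-minimal $b'$), but the construction and the logic are identical.
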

\begin{proof}
  Let $s \in S$ be such that $b_1$ is the only Betti-minimal element of $S$ below $s$ with respect to $\le_S$, and let $x \in \operatorname{I}(\Lambda)$ such that $\varphi(x) \le_S s$. Set $b = \varphi(x)$. Write $s-b = \omega  + qb_1$, where $\omega \in \Ap(S; b_1)$ and $q \in \mathbb{N}$. By Corollary~\ref{cor:isolated}, $\omega$ has only one factorization. We can choose $z = w+x+qy \in \operatorname{B}(s; \Lambda)$, where $\operatorname{Z}(\omega) = \{w\}$ and $y \in \operatorname{I}(b_1)$. Here we have used that Betti-minimal elements have isolated factorizations by Proposition~\ref{prop:minimal-betti}.
\end{proof}

Recall that $\operatorname{U}(\operatorname{Betti}(S))$ is the set of $b\in \operatorname{Betti}(S)$ such that $\downarrow\! b=\{ b'\in \operatorname{Betti}(S) : b'\le_S b\}$ is totally ordered. In Lemma~\ref{lem:brf:sorted} we establish connectivity of $\operatorname{B}(s; \Lambda)$ under some assumptions. This is the main ingredient of the proof of Theorem~\ref{thm:brf:sorted}.

\begin{lemma} \label{lem:brf:sorted}
  Let $S$ be a numerical semigroup. Let $u \in \operatorname{U}(\operatorname{Betti}(S))$ and 
  let $\Lambda = \downarrow\! u$. If $s \in S \setminus \Lambda$ is such that $u \le_S b$ for all $b \in \operatorname{Betti}(S) \setminus \Lambda$ with $b \le_S s$, then $\operatorname{B}(s; \Lambda)$ is connected in $\nabla_{s}$.
\end{lemma}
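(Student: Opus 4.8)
The plan is to prove connectivity of $\operatorname{B}(s;\Lambda)$ by induction on the length $l$ of the chain $\Lambda = \downarrow\! u = \{b_1 \le_S \cdots \le_S b_l = u\}$, where $b_1 \in \operatorname{Minimals}_{\le_S}\operatorname{Betti}(S)$ by Theorem~\ref{thm:betti-minimals:omega}. The base case $l = 1$, i.e.\ $\Lambda = \{b_1\}$, is handled by Lemma~\ref{lem:brf:1}: under the stated hypothesis $b_1$ is the only Betti-minimal element below $s$, so by Corollary~\ref{cor:isolated} the element $\omega_s$ (in the Ap\'ery decomposition $s = \omega_s + q_s b_1$) has a unique factorization $\{w\}$, and $\operatorname{B}(s;b_1) = \{w + x_1 + \cdots + x_{q_s} : x_i \in \operatorname{I}(b_1)\}$. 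Any two such factorizations can be joined by a path obtained by swapping the $x_i$'s one at a time through a common $x \in \operatorname{I}(b_1)$: concretely, $w + x_1 + \cdots + x_{q_s}$ and $w + x + x_2 + \cdots + x_{q_s}$ share the factorization $w + x + x_2 + \cdots + x_{q_s}$ wait—better, they both have nonzero dot product with $w + (\text{common generators of } w)$; since $q_s \ge 1$, both contain $w$, so if $w \ne 0$ they are adjacent in $\nabla_s$, and if $w = 0$ one replaces $x_1$ by a fixed $x$ in one coordinate at a time, each consecutive pair sharing $x_2 + \cdots + x_{q_s}$. Hence $\operatorname{B}(s;b_1)$ lies in one connected component of $\nabla_s$.

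For the inductive step, write $\Lambda = \Lambda' \cup \{u\}$ with $\Lambda' = \downarrow\! u \setminus \{u\} = \downarrow\! u'$ where $u' = b_{l-1}$ is the predecessor of $u$ in the chain (using that $u \in \operatorname{U}(\operatorname{Betti}(S))$). Given $z = w + x_1 + \cdots + x_t \in \operatorname{B}(s;\Lambda)$, group the summands: those $x_i$ with $\varphi(x_i) = u$ versus those with $\varphi(x_i) \in \operatorname{I}(\Lambda')$. The goal is to connect $z$, via a path in $\nabla_s$ staying inside $\operatorname{B}(s;\Lambda)$, to a canonical factorization; I would first reduce the number of "$u$-blocks": since $u \in \operatorname{IBetti}(S)$ its isolated factorizations are pairwise disjoint, and using Lemma~\ref{lem:brf:existence} (applied with the Betti-minimal element $b_1$, noting $b_1 \le_S u \le_S s$ and $b_1$ is the unique Betti-minimal element below $s$) one finds, for a fixed $x_u \in \operatorname{I}(u)$, a factorization $z' \in \operatorname{B}(s;\Lambda)$ with $x_u < z'$ and with $z'$ adjacent to $z$ or connected to $z$ by a short path through $\operatorname{B}(s - \varphi(x_u); \Lambda)$ — here I would invoke the inductive hypothesis on $s - \varphi(x_u)$ if it still satisfies the chain hypothesis, or more carefully peel off one copy of $u$ and apply the induction on $\Lambda' = \downarrow\! u'$ to $\operatorname{B}(s - j u; \Lambda')$. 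Iterating, every element of $\operatorname{B}(s;\Lambda)$ is connected to something in $\operatorname{B}(s - q u; \Lambda') + q \cdot x_u$ for the maximal $q$ with $s - qu \in S$ having the right shape, and these are connected among themselves by the inductive hypothesis (translation by the fixed $q\, x_u$ preserves adjacency in $\nabla$).

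The main obstacle I anticipate is the bookkeeping in the inductive step: after removing a copy of $u$, the element $s - u$ need not satisfy the hypothesis "$u \le_S b$ for all $b \in \operatorname{Betti}(S)\setminus\Lambda$ with $b \le_S s - u$" in the naive way, and one must be careful that the Betti elements newly "appearing" below $s$ that are incomparable to $u$ do not let us build factorizations that escape $\operatorname{B}(s;\Lambda)$ — the hypothesis on $s$ is precisely engineered to prevent this, so the delicate point is verifying that every $x \in \operatorname{I}(\Lambda)$ with $\varphi(x) \le_S s$ really can be "absorbed" into a single component, i.e.\ that the set $\operatorname{B}(s;\Lambda)$ is not split by the presence of an isolated factorization of $u$ that cannot be reached from one of $\Lambda'$. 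Lemma~\ref{lem:brf:existence} together with the disjointness of isolated factorizations of a fixed Betti element should resolve this, but the argument needs to be threaded carefully through the two layers of induction (on $l$ and, within each step, on the multiplicity of $u$ in the factorization).
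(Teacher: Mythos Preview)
Your inductive skeleton (on $l=|\Lambda|$) matches the paper, and your base case is essentially right: when $\Lambda=\{b_1\}$ the hypothesis forces $b_1$ to be the unique Betti-minimal element below $s$, so $\omega_s$ has a unique factorization $w$ and every element of $\operatorname{B}(s;b_1)=\{w+x_1+\cdots+x_{q_s}\}$ meets the others through $w$ (or, if $w=0$, through shared $x_i$'s, since then $s=q_sb_1$ with $q_s\ge 2$ because $s\notin\Lambda$).

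The inductive step is where your proposal remains a sketch. You try to peel copies of $u$ off $z$ and apply induction to $s-ju$; the paper instead keeps $s$ fixed and shrinks $\Lambda$ to $\Lambda_{l-1}=\downarrow b_{l-1}$. One checks that $s$ still satisfies the lemma's hypothesis for $b_{l-1}$ (any Betti element $\le_S s$ outside $\Lambda_{l-1}$ is either $b_l$ or lies above $b_l$, hence above $b_{l-1}$), so by induction $\operatorname{B}(s;\Lambda_{l-1})$ is already a single connected piece of $\nabla_s$. What remains is to connect each $z\in\operatorname{B}(s;\Lambda)\setminus\operatorname{B}(s;\Lambda_{l-1})$ back to this piece. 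This is cleaner than your plan because it avoids having to move between ``layers'' indexed by the number of $u$-blocks, which is exactly the bookkeeping you flag as unresolved.

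Even in the paper's framing the connecting step is nontrivial, and this is the genuine content your proposal lacks. The paper splits on $\mathfrak{d}(s-b_l)$. When $\mathfrak{d}(s-b_l)\ge 2$, Lemma~\ref{lem:brf:existence} yields a two-step bridge via an element containing both some $y\in\operatorname{I}(b_l)$ and some $x\in\operatorname{I}(b_1)$. The hard case is $\mathfrak{d}(s-b_l)=1$: then every $z\in\operatorname{B}(s;\Lambda)\setminus\operatorname{B}(s;\Lambda_{l-1})$ has the form $w_2+y$ with $\{w_2\}=\operatorname{Z}(s-b_l)$ and $y\in\operatorname{I}(b_l)$, and one must show these are not disjoint from all of $\operatorname{B}(s;\Lambda_{l-1})$. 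The paper does this by contradiction, writing $b_l=\omega_1+qb_1$ in Ap\'ery form and splitting further on $\mathfrak{d}(\omega_1+\omega_2)$; in the subcase $\mathfrak{d}(\omega_1+\omega_2)\ge 2$ an application of Lemma~\ref{lem:isolated} together with Lemma~\ref{lem:disjoint-betti} forces $b_l\le_S\omega_1+\omega_2$, which after unwinding the Ap\'ery relations gives $q=0$, contradicting $\mathfrak{d}(b_l)\ge 2$. Nothing in your proposal corresponds to this argument, and it is the step where the hypothesis $u\in\operatorname{U}(\operatorname{Betti}(S))$ is really used.
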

\begin{proof}
  Assume that $\Lambda=\{  b_1<_S\dots<_S b_l\}$, and so $u=b_l$. Write $\Lambda_i=\downarrow\! b_i=\{b_1<_S \dots<_S b_i\}$, for $i\in\{1,\ldots,l\}$. We proceed by induction on $l$, the size of $\Lambda$. Note that, by definition of $\downarrow\! u$, $b_1$ is Betti-minimal and the only Betti-minimal element with $b_1 \le_S u$. Let $s \in S \setminus \Lambda$ be such that $u \le_S b$ for every $b \in \operatorname{Betti}(S) \setminus \Lambda$ with $b \le_S s$. If $b \in \operatorname{Betti}(S)$ with $b \le_S s$, then either $b \in \Lambda$ and $b_1 \le_S b$, or $b \not \in \Lambda$ and $b_1 \le_S u \le_S b$. In any case, we have shown that $b_1 \le_S b$ for every $b \in \operatorname{Betti}(S)$ with $b \le_S s$. Hence, either $\mathfrak{d}(s) = 1$ or $b_1$ is the only Betti-minimal element with $b_1 \le_S s$. We will use this fact in our induction.
   
  First, we study the case $l=1$. Note that either $\mathfrak{d}(s) = 1$ or $b_1$ is the only minimal element of $S$ with $b_1 \le_S s$. In the first case, we have $\operatorname{B}(s; b_1) = \operatorname{Z}(s)$. In the second case, Lemma~\ref{lem:brf:existence}, with $\Lambda=\{b_1\}$, yields that $\operatorname{B}(s; b_1)$ is non-empty. Its connectivity follows from Lemma~\ref{lem:brf:1}.
 
  If $l\ge 2$, let us assume that the result holds for $l-1$. If $\operatorname{B}(s; \Lambda) = \operatorname{B}(s; \Lambda_{l-1})$, then we are done by the induction hypothesis. Let us consider the case $\operatorname{B}(s; \Lambda) \ne \operatorname{B}(s; \Lambda_{l-1})$. In this case we have $b_l \le_S s$, so $b_1$ is the only minimal Betti element of $S$ with $b_1 \le_S s$. There are two cases depending on the number of factorizations of $s-b_l$.
  
{\sc Case 1:}     $\mathfrak{d}(s-b_l) \ge 2$. Notice that, 
under this assumption, $b_1$ is the only minimal Betti element of $S$ with $b_1 \le_S s-b_l$. Let $z_1 \in \operatorname{B}(s; \Lambda) \setminus \operatorname{B}(s; \Lambda_{l-1})$. There is $y \in \operatorname{I}(b_l)$ such that $y < z_1$. Let $x \in \operatorname{I}(b_1)$. Since $b_1 \le_S s-b_l$, Lemma~\ref{lem:brf:existence} provides us with $z \in \operatorname{B}(s-b_l; \Lambda)$ such that $x < z$. Thus, we have $z_2 = z+y \in \operatorname{B}(s; \Lambda_l)$ and $z_1 \cdot z_2 \ne 0$. Moreover, there is $z_3 \in \operatorname{B}(s; b_1) \subseteq \operatorname{B}(s; \Lambda_{l-1})$ with $x < z_3$ (Lemma~\ref{lem:brf:1}) and, in particular, $z_2 \cdot z_3 \ne 0$.  From the arbitrary choice of $z_1$ and the fact that $\operatorname{B}(s; \Lambda_{l-1})$ is connected, it follows that $\operatorname{B}(s; \Lambda)$ is also connected.
      
{\sc Case 2:} $\mathfrak{d}(s- b_l) = 1$. Set $\omega_2 = s - b_l$, and let $w_2$ be the unique factorization of $\omega_2$. If $z \in \operatorname{B}(s; \Lambda) \setminus \operatorname{B}(s; \Lambda_{l-1})$, then there is $y \in \operatorname{I}(b_l)$ with $y < z$. Note that $z -y$ is a factorization of $\omega_2$, whence  $z = w_2 + y$. That is, we have shown that
\[ \operatorname{B}(s; \Lambda) \setminus \operatorname{B}(s; \Lambda_{l-1}) \subseteq \{w_2 + y \colon y \in \operatorname{I}(b_l) \}. \]
Let us suppose that  $\operatorname{B}(s; \Lambda)$ has at least two $R$-classes in order to obtain a contradiction. Since $\operatorname{B}(s; \Lambda_{l-1})$ and $\operatorname{B}(s; \Lambda) \setminus \operatorname{B}(s; \Lambda_{l-1})$ are connected, the only option is $z \cdot y = 0$ for every $z \in \operatorname{B}(s; \Lambda_{l-1})$ and $y \in \operatorname{B}(s; \Lambda) \setminus \operatorname{B}(s; \Lambda_{l-1})$.  Write $b_l = \omega_1 + q b_1$ with $\omega_1 \in \Ap(S; b_1)$ and $q\in \mathbb{N}$ (this implies $\operatorname{Z}(\omega_1) = \{w_1\}$ by Corollary~\ref{cor:isolated}). We have $s = \omega_1 + \omega_2 + q b_1$. There are two possible subcases, each of which yields a contradiction.
\par {\sc Subcase 2.1:} $\mathfrak{d}(\omega_1+\omega_2) = 1$. The factorization $z = w_1 + w_2 + q x$ is in $\operatorname{B}(s; \Lambda)$ for any $x \in \operatorname{I}(b_1)$, but it is not disjoint with any element of $\operatorname{B}(s; \Lambda) \setminus \operatorname{B}(s; \Lambda_{l-1})$, a contradiction.   
\par {\sc Subcase 2.2:} $\mathfrak{d}(\omega_1+\omega_2) \ge 2$. 
In light of Lemma~\ref{lem:isolated}, there exist $b \in \operatorname{Betti}(S)$ and $y \in \operatorname{I}(b)$ such that $y \le w_1 + w_2$. \ For all $x \in\operatorname{I}(\Lambda_{l-1})$ we have $\varphi(x) \le_S b_l \le_S s$ and there is $z \in \operatorname{B}(s; \Lambda_{l-1})$ with $x < z$ (Lemma~\ref{lem:brf:existence}). Under the standing assumption, $z \cdot w_2 = 0$. In particular, we have $w_2 \cdot x = 0$. It follows that the factorization $w_2$ is disjoint with any isolated factorization of the Betti elements $b_1, \ldots, b_{l-1}$. Since $y \le w_1 + w_2$, $y \in \operatorname{I}(b)$ and $w_1, w_2 \in \operatorname{I}_s(S)$, we have $w_1 \cdot y \ne 0$ and $w_2 \cdot y \ne 0$. Hence, it follows that $y$ is not an isolated factorization of any of the elements $b_1, \ldots, b_{l-1}$, so $b \not \in \Lambda_{l-1}$. Since $b \le _S \omega_1+\omega_2 \le_S s$, either $b \in \Lambda$ or $b_l \le_S b$ by hypothesis. We conclude that $b_l \le_S b \le_S \omega_1 + \omega_2$. Write $\omega_1+\omega_2 = \omega + p b_1 + b_l$ with $\omega \in \Ap(S; b_1)$ and $p \ge 0$. Since $s = \omega_2 + b_l$ and $ s = \omega_1 + \omega_2 + qb_1 = \omega + (p+q)b_1 + b_l$, we obtain $\omega_2 = s - b_l = \omega + (p+q)b_1$. Recall that $\mathfrak{d}(\omega_2) = 1$. This forces $p = 0 = q$, a contradiction because $b_l = \omega_1 + q b_1$ and $\mathfrak{d}(b_l) \ge 2$.
\end{proof}
We now have the ingredients necessary to prove Theorem~\ref{thm:brf:sorted}, which determines the number of isolated factorizations of any $b \in \operatorname{U}(\operatorname{Betti}(S))$.

\begin{theorem} \label{thm:brf:sorted}
  Let $S$ be a numerical semigroup and let  $b \in \operatorname{U}(\operatorname{Betti}(S))$. Then either $b$ is minimal and all its factorizations are isolated, or the number of isolated factorizations of $b$ equals its number of $R$-classes minus $1$. 
\end{theorem}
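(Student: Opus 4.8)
The plan is to prove the statement by induction on the size $l$ of the down-set $\downarrow\! b=\{b'\in\operatorname{Betti}(S):b'\le_S b\}$, which for $b\in\operatorname{U}(\operatorname{Betti}(S))$ is a finite chain $b_1<_S\cdots<_S b_l=b$. The base case $l=1$ is exactly the case where $b$ is Betti-minimal: Proposition~\ref{prop:minimal-betti} then says $b$ has at least two factorizations and all of them are isolated, which is the first alternative. So assume $l\ge 2$; then $b$ is not Betti-minimal, and I would also note that each $b_i$ lies in $\operatorname{U}(\operatorname{Betti}(S))$ with $\downarrow\! b_i=\{b_1,\dots,b_i\}$, so the induction hypothesis applies to $b_1,\dots,b_{l-1}$.

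The first substantive step is to record that $\Lambda:=\{b_1,\dots,b_{l-1}\}=\downarrow\! b_{l-1}\subseteq\operatorname{IBetti}(S)$. This is immediate from the induction hypothesis: for each $i<l$, either $b_i$ is Betti-minimal, in which case $\operatorname{i}(b_i)=\mathfrak{d}(b_i)\ge 2$, or $\operatorname{i}(b_i)=\operatorname{nc}(\nabla_{b_i})-1\ge 1$; in both cases $b_i$ has an isolated factorization. The second step is to feed this into our two key lemmas with the parameter choice $u=b_{l-1}$, $\Lambda=\downarrow\! b_{l-1}$, $s=b$. One checks directly that $b\in S\setminus\Lambda$, that the only Betti element $\le_S b$ lying outside $\Lambda$ is $b$ itself (and $b_{l-1}\le_S b$), and that $\{b'\in\operatorname{IBetti}(S):b'<_S b\}=\Lambda$. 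Lemma~\ref{lem:brf:sorted} then gives that $\operatorname{B}(b;\Lambda)$ is connected in $\nabla_b$, while Lemma~\ref{lem:brf:iso} identifies $\operatorname{B}(b;\Lambda)=\operatorname{Z}(b)\setminus\operatorname{I}_b(S)$, i.e.\ exactly the set of non-isolated factorizations of $b$.

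The last step is the bookkeeping that turns this into the count of $R$-classes. Since $b$ is a Betti element we have $\mathfrak{d}(b)\ge 2$, and since $b$ is not Betti-minimal, Proposition~\ref{prop:minimal-betti} shows that not all factorizations of $b$ are isolated, so $\operatorname{B}(b;\Lambda)\ne\emptyset$. Every non-isolated factorization has a neighbour in $\nabla_b$, and by the above all non-isolated factorizations lie in the single connected set $\operatorname{B}(b;\Lambda)$, so they span precisely one connected component of $\nabla_b$; the remaining components are the $\operatorname{i}(b)$ singletons formed by the isolated factorizations. Hence $\operatorname{nc}(\nabla_b)=\operatorname{i}(b)+1$, which is the second alternative, and the induction is complete.

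I expect the main obstacle to be not any single computation but correctly threading the induction through the hypotheses of Lemma~\ref{lem:brf:sorted} and Lemma~\ref{lem:brf:iso}: one must ensure that every proper down-set of $b$ consists of Betti elements having isolated factorizations (which is what makes $\operatorname{B}(b;\Lambda)$ even meaningful and is exactly what the inductive hypothesis supplies), and one must carefully convert the statement "$\operatorname{B}(b;\Lambda)$ is connected in $\nabla_b$" into the exact identity $\operatorname{nc}(\nabla_b)=\operatorname{i}(b)+1$ by verifying that no component other than $\operatorname{B}(b;\Lambda)$ fails to be a singleton.
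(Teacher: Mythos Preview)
Your proof is correct and follows essentially the same approach as the paper: split on whether $b$ is Betti-minimal, and in the non-minimal case apply Lemma~\ref{lem:brf:sorted} with $u=b_{l-1}$, $s=b$ to get connectedness of $\operatorname{B}(b;\Lambda)$, then use Lemma~\ref{lem:brf:iso} to identify $\operatorname{B}(b;\Lambda)$ with the non-isolated factorizations of $b$. Your framing of the argument as an explicit induction on $l$ is in fact slightly more careful than the paper's version, since you use the inductive hypothesis to verify $\Lambda\subseteq\operatorname{IBetti}(S)$ (needed for Lemma~\ref{lem:brf:iso}) and you explicitly check $\operatorname{B}(b;\Lambda)\ne\emptyset$ via Proposition~\ref{prop:minimal-betti}, both of which the paper leaves implicit.
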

\begin{proof}
  Let $\downarrow\negthinspace b=\{b_1<_S \dots<_S b_l\}\subseteq \operatorname{Betti}(S)$. 
  If $l = 1$, then $b$ is Betti-minimal and its factorizations are isolated (see Proposition~\ref{prop:minimal-betti}). Otherwise, we apply Lemma~\ref{lem:brf:sorted} to $b_1, \ldots, b_l$, and conclude that $\operatorname{B}(b; \Lambda)$ is connected for $\Lambda = \{b_1, \ldots, b_{l-1}\}$. By Lemma~\ref{lem:brf:iso}, we find that $\operatorname{Z}(b) = \operatorname{B}(b; \Lambda) \cup \operatorname{I}(b),$ thus the number of isolated factorizations of $b$ is one less than the number of  $R$-classes.
\end{proof}

\begin{corollary} \label{cor:brf:b2}
  Let $S$ a numerical semigroup. Write $\operatorname{Betti}(S) = \{b_1 < b_2 < \cdots < b_k\}$. Let us assume that $k \ge 2$. 
  \begin{enumerate}[{\rm a)}]
  \item If $b_2 - b_1 \not\in S$, then $b_2$ is Betti-minimal; that is, all its factorizations are isolated.
  \item If $b_1 \le_S b_2$, then $b_2$ has $\operatorname{nc}(\nabla_{b_2})-1$ isolated factorizations.
  \end{enumerate}
\end{corollary}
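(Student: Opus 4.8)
The plan is to deduce both parts from Theorem~\ref{thm:brf:sorted}, after first checking that $b_2$ automatically lies in $\operatorname{U}(\operatorname{Betti}(S))$. The crucial (and essentially only) observation needed is that $\le_S$ refines the usual order on $\mathbb{N}$: if $a \le_S b$ for $a, b \in \mathbb{N}$, then $b - a \in S \subseteq \mathbb{N}$, so $a \le b$. Consequently the down-set $\downarrow\! b_2 = \{b \in \operatorname{Betti}(S) : b \le_S b_2\}$ is contained in $\{b_1, b_2\}$, because $b_1 < b_2 < \cdots < b_k$ in the natural order. Thus $\downarrow\! b_2$ has at most two elements; a singleton is trivially a chain, and if $\downarrow\! b_2 = \{b_1, b_2\}$ the only pair in it is $\le_S$-comparable, so $\downarrow\! b_2$ is again totally ordered. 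Hence $b_2 \in \operatorname{U}(\operatorname{Betti}(S))$ with no hypothesis beyond $k \ge 2$.

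For part~a), assume $b_2 - b_1 \not\in S$, i.e.\ $b_1 \not\le_S b_2$. Then $\downarrow\! b_2 = \{b_2\}$, so $b_2 \in \operatorname{Minimals}_{\le_S} \operatorname{Betti}(S)$ is Betti-minimal. By Proposition~\ref{prop:minimal-betti} (equivalently, the first alternative in Theorem~\ref{thm:brf:sorted}), $b_2$ has at least two factorizations and all of them are isolated, which is the claim.

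For part~b), assume $b_1 \le_S b_2$; since $b_1 \ne b_2$ this yields $b_1 <_S b_2$, so $\downarrow\! b_2 = \{b_1, b_2\}$ and $b_2$ is not $\le_S$-minimal in $\operatorname{Betti}(S)$. Applying Theorem~\ref{thm:brf:sorted} to $b = b_2$, which we have just shown lies in $\operatorname{U}(\operatorname{Betti}(S))$, the second alternative must hold, so the number of isolated factorizations of $b_2$ equals $\operatorname{nc}(\nabla_{b_2}) - 1$. I do not expect a genuine obstacle here: the substantive work is entirely contained in Theorem~\ref{thm:brf:sorted}, and the only point requiring a moment's care is the elementary bookkeeping that $\downarrow\! b_2 \subseteq \{b_1, b_2\}$ — this is exactly what both forces $b_2$ into $\operatorname{U}(\operatorname{Betti}(S))$ and splits the statement into its two cases according to whether $b_1 \le_S b_2$.
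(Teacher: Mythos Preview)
Your proof is correct and follows exactly the approach the paper intends: the paper's proof reads in its entirety ``This is a direct consequence of Theorem~\ref{thm:brf:sorted},'' and the surrounding discussion spells out precisely your observation that $\downarrow\! b_2 \subseteq \{b_1,b_2\}$ forces $b_2 \in \operatorname{U}(\operatorname{Betti}(S))$, after which the two cases of Theorem~\ref{thm:brf:sorted} give parts a) and b) respectively.
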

\begin{proof}
  This is a direct consequence of Theorem~\ref{thm:brf:sorted}.
\end{proof}
Theorem~\ref{thm:brf:sorted} gives us some information about the smallest Betti elements. Let $b_1 = \min \operatorname{Betti}(S)$. It is clear that $b_1$ is Betti-minimal. Let us assume that there exists $b_2 = \min(\operatorname{Betti}(S) \setminus \{b_1\})$. Then either $b_2$ is Betti-minimal ($b_2 - b_1 \not \in S$), or $b_1 \le_S b_2$. In the latter case we can apply Theorem~\ref{thm:brf:sorted} to conclude that $\operatorname{i}(b_2) = \operatorname{nc}(\nabla_{b_2})-1$. Therefore, $b_2$ always has isolated factorizations. 

\begin{example} \label{ex:isolated}
Let $S=\langle 10,15,16,17,19\rangle$. Then the Hasse diagram of $(\operatorname{Betti}(S),\le_S)$ looks as follows:
 \begin{center}
\begin{tikzpicture}[>=latex',line join=bevel,]
  \pgfsetlinewidth{1bp}
\pgfsetcolor{black}
  \draw [<-] (51.0bp,57.817bp) .. controls (51.0bp,47.547bp) and (51.0bp,32.408bp)  .. (51.0bp,22.149bp);
  \draw [<-] (11.0bp,57.817bp) .. controls (11.0bp,47.547bp) and (11.0bp,32.408bp)  .. (11.0bp,22.149bp);
  \draw [<-] (17.25bp,59.938bp) .. controls (24.721bp,49.105bp) and (37.152bp,31.079bp)  .. (44.662bp,20.19bp);
\begin{scope}
  \definecolor{strokecol}{rgb}{0.0,0.0,0.0};
  \pgfsetstrokecolor{strokecol}
  \draw (51.0bp,69.0bp) ellipse (11.0bp and 11.0bp);
  \draw (51.0bp,69.0bp) node {48};
\end{scope}
\begin{scope}
  \definecolor{strokecol}{rgb}{0.0,0.0,0.0};
  \pgfsetstrokecolor{strokecol}
  \draw (51.0bp,11.0bp) ellipse (11.0bp and 11.0bp);
  \draw (51.0bp,11.0bp) node {32};
\end{scope}
\begin{scope}
  \definecolor{strokecol}{rgb}{0.0,0.0,0.0};
  \pgfsetstrokecolor{strokecol}
  \draw (11.0bp,69.0bp) ellipse (11.0bp and 11.0bp);
  \draw (11.0bp,69.0bp) node {57};
\end{scope}
\begin{scope}
  \definecolor{strokecol}{rgb}{0.0,0.0,0.0};
  \pgfsetstrokecolor{strokecol}
  \draw (11.0bp,11.0bp) ellipse (11.0bp and 11.0bp);
  \draw (11.0bp,11.0bp) node {30};
\end{scope}
\begin{scope}
  \definecolor{strokecol}{rgb}{0.0,0.0,0.0};
  \pgfsetstrokecolor{strokecol}
  \draw (171.0bp,11.0bp) ellipse (11.0bp and 11.0bp);
  \draw (171.0bp,11.0bp) node {36};
\end{scope}
\begin{scope}
  \definecolor{strokecol}{rgb}{0.0,0.0,0.0};
  \pgfsetstrokecolor{strokecol}
  \draw (131.0bp,11.0bp) ellipse (11.0bp and 11.0bp);
  \draw (131.0bp,11.0bp) node {35};
\end{scope}
\begin{scope}
  \definecolor{strokecol}{rgb}{0.0,0.0,0.0};
  \pgfsetstrokecolor{strokecol}
  \draw (91.0bp,11.0bp) ellipse (11.0bp and 11.0bp);
  \draw (91.0bp,11.0bp) node {34};
\end{scope}
\end{tikzpicture}
\end{center}
Hence the minimal elements of $\operatorname{Betti}(S)$ are $30, 32, 34, 35, 36$. The factorizations of these elements are all isolated. Theorem~\ref{thm:brf:sorted} allows us to conclude that $48$ has isolated factorizations and that $\operatorname{nc}(\nabla_{48}) = \operatorname{i}(48)+1$. Note that this result does not provide information about the factorizations of $57$ ($\downarrow\! 57=\{30,32,57\}$). In fact, one can check that $57$ has an isolated factorization with the help of the GAP package \texttt{numericalsgps}.
\end{example}

Next, we want to give an expression for $\operatorname{B}(s; \Lambda \cup \{b\})$ in terms of $\operatorname{B}(s -j b; \Lambda),$ for suitable $j,$ which is meant to be of the same shape as that given in recursion~\eqref{eq:ri:recurrence}.

\begin{lemma} \label{lem:brf:recurrence}
  Let $S$ be a numerical semigroup. Let $\Lambda \subseteq \operatorname{IBetti}(S)$ with $\Lambda \ne \emptyset$ and $b \in \operatorname{IBetti}(S) \setminus \Lambda$. Then for every $s \in S$ we have
  \[\operatorname{B}(s; \Lambda \cup \{b\}) = \bigcup_{j = 0}^{q_s} \left(\operatorname{B}(s- j b; \Lambda) + \sum_{i = 1}^j \operatorname{I}(b)\right),\]
  where $q_s$ is the largest integer such that $q_s b \le_S s$. In particular, 
  \[\lvert\operatorname{B}(s; \Lambda \cup \{b\})\rvert \le \sum_{j = 0}^{q_s} \lvert\operatorname{B}(s- j b; \Lambda)\rvert \binom{\operatorname{i}(b) + j -1}{j}.\]
\end{lemma}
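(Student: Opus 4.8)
The plan is to prove the set-theoretic identity first and then deduce the cardinality bound as an immediate consequence. Fix $s\in S$ and let $q_s$ be the largest integer with $q_s b\le_S s$. For the inclusion $\subseteq$, take an element $z=w+x_1+\dots+x_l\in\operatorname{B}(s;\Lambda\cup\{b\})$ with $w\in\operatorname{I}_s(S)$ and each $x_i\in\operatorname{I}(\Lambda\cup\{b\})=\operatorname{I}(\Lambda)\cup\operatorname{I}(b)$. Group the summands according to whether they lie in $\operatorname{I}(b)$ or in $\operatorname{I}(\Lambda)$; say exactly $j$ of them lie in $\operatorname{I}(b)$, so that $z=\bigl(w+\sum_{i\in A}x_i\bigr)+\sum_{i\in B}x_i$ where the first parenthesized sum involves only factorizations from $\operatorname{I}_s(S)$ and $\operatorname{I}(\Lambda)$ and hence is an element of $\operatorname{B}(s-jb;\Lambda)$ after checking it factorizes $s-jb$, while the remaining $j$ summands contribute an element of $\sum_{i=1}^{j}\operatorname{I}(b)$. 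We must have $jb\le_S s$, so $j\le q_s$, giving the desired membership. For $\supseteq$, any element of $\operatorname{B}(s-jb;\Lambda)+\sum_{i=1}^{j}\operatorname{I}(b)$ with $0\le j\le q_s$ is visibly of the form $w+x_1+\dots+x_{l'}+y_1+\dots+y_j$ with $w\in\operatorname{I}_s(S)$, $x_i\in\operatorname{I}(\Lambda)$, $y_i\in\operatorname{I}(b)$, hence lies in $\operatorname{B}(s;\Lambda\cup\{b\})$ (it is a factorization of $(s-jb)+jb=s$).

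First I would be careful about one subtlety in the $\subseteq$ direction: the decomposition of $z$ into ``$\operatorname{I}(b)$-part'' and ``$\operatorname{I}(\Lambda)$-part'' is well defined because $b\notin\Lambda$, so $\operatorname{I}(b)$ and $\operatorname{I}(\Lambda)$ are disjoint subsets of $\operatorname{Z}(S)$ (an isolated factorization of $b$ is a factorization of $b$, while an isolated factorization of an element of $\Lambda$ is a factorization of that element, and $b$ is not one of those elements). Here I use that $\Lambda\ne\emptyset$ only to ensure the statement is not vacuous; the argument itself does not need it, but it matches the hypothesis. The value $j$ is thus canonically determined by $z$, and $w+\sum_{i\in A}x_i$ is a genuine element of $\operatorname{Z}(s-jb)$ of the right shape, using that $\varphi$ is additive on $\mathbb{N}^{\operatorname{e}(S)}$.

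For the cardinality bound, apply $\lvert\cdot\rvert$ to the union and use subadditivity of cardinality over a union together with $\lvert A+B\rvert\le\lvert A\rvert\cdot\lvert B\rvert$ for the Minkowski-type sums: the set $\sum_{i=1}^{j}\operatorname{I}(b)$ has at most $\binom{\operatorname{i}(b)+j-1}{j}$ elements (combinations with repetition of $j$ elements from the $\operatorname{i}(b)$-element set $\operatorname{I}(b)$, exactly as in the proof of Corollary~\ref{cor:brf:1}), so $\lvert\operatorname{B}(s-jb;\Lambda)+\sum_{i=1}^{j}\operatorname{I}(b)\rvert\le\lvert\operatorname{B}(s-jb;\Lambda)\rvert\binom{\operatorname{i}(b)+j-1}{j}$, and summing over $j$ from $0$ to $q_s$ yields the stated inequality.

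I do not expect a genuine obstacle here; the statement is essentially bookkeeping. The one place to stay alert is making sure the partition of the $x_i$ by membership in $\operatorname{I}(b)$ versus $\operatorname{I}(\Lambda)$ is unambiguous (handled by $b\notin\Lambda$) and that the reindexing $s\mapsto s-jb$ respects the constraint $jb\le_S s$, i.e.\ $s-jb\in S$, which is exactly the condition defining $q_s$. Everything else is the defining form of Betti restricted factorizations applied twice.
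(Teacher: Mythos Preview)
Your proof is correct and follows essentially the same approach as the paper's: for the inclusion $\supseteq$ the paper also says it is clear, and for $\subseteq$ the paper peels off summands from $\operatorname{I}(b)$ one at a time by an iterative argument, whereas you do it in one step by fixing a decomposition $z=w+x_1+\cdots+x_l$ and partitioning the $x_i$; these are the same idea. One small inaccuracy in your commentary: the value $j$ is determined by the \emph{chosen decomposition} of $z$, not canonically by $z$ itself---indeed the example immediately following the lemma in the paper exhibits a single $z$ lying in two different pieces of the union with $j=1$ and $j=6$---but since you only need membership in the union, this does not affect the argument.
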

\begin{proof}
  It is clear that $\bigcup_{j = 0}^{q_s} \left(\operatorname{B}(s- j b; \Lambda) + \sum_{i = 1}^j \operatorname{I}(b)\right) \subseteq \operatorname{B}(s; \Lambda \cup \{b\})$. Let $z \in \operatorname{B}(s; \Lambda \cup \{b\})$. If $z \not \in \operatorname{B}(s; \Lambda)$, then there is $y \in \operatorname{I}(b)$ such that $y \le z$ and $z-y \in \operatorname{B}(s-b; \Lambda\cup\{b\})$. We can repeat this argument a finite number of times until we find $y_1, \ldots, y_j \in \operatorname{I}(b)$ and $x \in \operatorname{B}(s-jb; \Lambda)$ such that $z = x + y_1 + \cdots + y_j$.  Hence, we obtain $z \in \operatorname{B}(s- j b; \Lambda) + \sum_{i = 1}^j \operatorname{I}(b)$.
\end{proof}

\begin{example}
  Let us consider the numerical semigroup $S = \langle 4, 5, 6\rangle$. We have $\operatorname{Betti}(S) = \{10, 12\}$, $\operatorname{Z}(10) = \{(1,0,1), (0,2,0)\}$ and $\operatorname{Z}(12) = \{(3,0,0), (0,0,2)\}$. Since $72 = 6 \cdot 12$, by Lemmas~\ref{lem:brf:iso} and \ref{lem:brf:recurrence}  we find that 
  \[\operatorname{Z}(72) = \operatorname{B}(72; \{10, 12\}) = \bigcup_{j = 0}^{6} \left(\operatorname{B}((6 - j) 12; 10) + \sum_{i = 1}^j \operatorname{I}(12)\right).\]
   Note that $(6, 0, 8) \in \operatorname{Z}(72)$ and
  \[(6, 0, 8) = 6(1,0,1) + (0,0,2) = 2 (3,0,0) + 4(0,0,2).\]
  This union is therefore not disjoint and the inequality given in Lemma~\ref{lem:brf:recurrence} can be strict.
\end{example}

The following lemma shows that, under the hypotheses of Theorem~\ref{thm:brf:sorted}, the upper bound given in Lemma~\ref{lem:brf:recurrence} can be attained. Note that this recurrent expression has already arisen in~\eqref{eq:ri:recurrence}.

\begin{lemma} \label{lem:brf:sorted:2}
  Let $S$ be a numerical semigroup. Let $u \in \operatorname{U}(\operatorname{Betti}(S))$ and let $\Lambda = \downarrow\! u$. Then, for every $s \in S$ and $z \in \operatorname{B}(s; \Lambda)$, there are unique $w \in \operatorname{I}_s(S)$ and $x_1, \ldots, x_t \in \operatorname{I}(\Lambda)$ such that $z = w + x_1 + \cdots + x_t$. Moreover, we have
  \[ \lvert \operatorname{B}(s; \Lambda) \rvert = \sum_{j = 0}^{q_s} \lvert\operatorname{B}(s- j u; \Lambda \setminus \{u\})\rvert \binom{\operatorname{i}(u) + j -1}{j}, \]
  where $q_s$ is the largest integer such that $q_s u \le_S s$.
\end{lemma}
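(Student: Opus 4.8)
The plan is to establish two things: first, the \emph{uniqueness} of the decomposition $z = w + x_1 + \cdots + x_t$ with $w \in \operatorname{I}_s(S)$ and $x_i \in \operatorname{I}(\Lambda)$ (as multisets of summands, since the $x_i$ are unordered), and second, the counting identity, which will then follow from uniqueness together with Lemma~\ref{lem:brf:recurrence} by showing the upper bound there is attained. For the uniqueness claim, I would argue by induction on $l = \lvert \Lambda \rvert$, writing $\Lambda = \{b_1 <_S \cdots <_S b_l\}$ with $u = b_l$, and exploiting that $\Lambda = \downarrow\! u$ is totally ordered by $\le_S$. The key structural input is Lemma~\ref{lem:disjoint-betti}: if $b_i <_S b_j$ are Betti elements, then any factorization of $b_i$ is disjoint (zero dot product) from any isolated factorization of $b_j$. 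Combined with Corollary~\ref{cor:isolated} (elements with unique factorization are exactly those in the intersection of the Ap\'ery sets of the Betti elements), this should let me separate the ``top'' summands involving $u = b_l$ from the rest.

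More concretely, given $z = w + x_1 + \cdots + x_t \in \operatorname{B}(s;\Lambda)$, let $j$ be the number of indices $i$ with $\varphi(x_i) = u$; I claim $j = q_s'$ where $s' = s - \varphi(\text{the non-}u\text{ part})$, and more usefully that collecting the $u$-summands is forced. Since every $b_i$ for $i < l$ satisfies $b_i \le_S u$ (because $\Lambda$ is a chain and $\Lambda = \downarrow\! u$), write $u = \omega_1 + q b_1$ with $\omega_1 \in \Ap(S;b_1)$ as in the proof of Lemma~\ref{lem:brf:sorted}. The residue of $\varphi(z) = s$ modulo $u$ — more precisely the decomposition $s = \omega_s + q_s u$ via the Ap\'ery set of $u$ — should pin down how many $u$-summands can appear, after I verify that $\varphi(w + x_1 + \cdots + x_{t'})$ (the part with no $u$-summand) lies in $\Ap(S; u)$. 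This last point is where Lemma~\ref{lem:brf:iso} applied with $\Lambda \setminus \{u\}$ and Corollary~\ref{cor:isolated} enter: a Betti-restricted factorization over $\Lambda \setminus \{u\}$ of an element $s''$ forces $s'' \in \Ap(S;u)$ provided no Betti element $\le_S s''$ exceeds or equals $u$, which holds since $\Lambda \setminus \{u\}$ consists of elements strictly $<_S u$ and, for the induction to go through, one restricts attention to the relevant $s$. Once the multiset of $u$-summands is uniquely determined, the inductive hypothesis applied to $s - ju$ and $\Lambda \setminus \{u\} = \downarrow\! b_{l-1}$ finishes uniqueness.

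For the counting identity, uniqueness gives that the union in Lemma~\ref{lem:brf:recurrence},
\[
\operatorname{B}(s;\Lambda) = \bigcup_{j=0}^{q_s}\Bigl(\operatorname{B}(s - ju; \Lambda \setminus \{u\}) + \sum_{i=1}^{j}\operatorname{I}(u)\Bigr),
\]
is in fact \emph{disjoint}, and moreover that within each term the map $(x, \{y_1,\dots,y_j\}) \mapsto x + y_1 + \cdots + y_j$ is injective, where $x$ ranges over $\operatorname{B}(s-ju;\Lambda\setminus\{u\})$ and $\{y_1,\dots,y_j\}$ over size-$j$ multisets from $\operatorname{I}(u)$. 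The disjointness of the union across different $j$ is exactly the statement that the number of $u$-summands is an invariant of $z$, which uniqueness provides; the within-term injectivity is the statement that splitting off the non-$u$ part is well-defined, again uniqueness. Since $\lvert \operatorname{I}(u)\rvert = \operatorname{i}(u)$, the number of size-$j$ multisets is $\binom{\operatorname{i}(u)+j-1}{j}$, and summing over $j$ yields the claimed formula. The main obstacle I anticipate is the bookkeeping in the uniqueness proof: showing rigorously that the ``non-$u$ part'' of a Betti-restricted factorization projects into $\Ap(S;u)$ and hence that the count of $u$-summands equals $q_s$ for that part — this requires carefully chaining Corollary~\ref{cor:isolated}, Lemma~\ref{lem:disjoint-betti} and Lemma~\ref{lem:brf:iso}, and handling the base case $l = 1$ (which is essentially Lemma~\ref{lem:brf:1}) correctly.
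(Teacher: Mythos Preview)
Your overall architecture is right: prove uniqueness of the decomposition, then conclude that the union in Lemma~\ref{lem:brf:recurrence} is disjoint and each summand has size $\lvert\operatorname{B}(s-ju;\Lambda\setminus\{u\})\rvert\binom{\operatorname{i}(u)+j-1}{j}$. You also correctly flag Lemma~\ref{lem:disjoint-betti} as the key structural input. However, your proposed mechanism for uniqueness is flawed.

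You want to pin down the number of $u$-summands by arguing that the image of the ``non-$u$ part'' lies in $\Ap(S;u)$. This is false in general. Take $\Lambda=\{b_1,u\}$ with $b_1<_S u$, pick $x\in\operatorname{I}(b_1)$, and consider $z=kx$ for large $k$ (with $w=0$). This $z$ is a perfectly good element of $\operatorname{B}(kb_1;\Lambda)$ with no $u$-summands, yet $kb_1$ is certainly not in $\Ap(S;u)$ once $k$ is large. The ``provided no Betti element $\le_S s''$ exceeds or equals $u$'' clause you invoke is not guaranteed by merely having a decomposition over $\Lambda\setminus\{u\}$; an element of $S$ can have Betti-restricted factorizations over $\Lambda\setminus\{u\}$ and simultaneously have $u\le_S$ it. So the Ap\'ery-set bookkeeping cannot isolate the $u$-count.

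The paper's argument is both simpler and avoids induction entirely. The observation is that the elements of $\operatorname{I}(\Lambda)$ have \emph{pairwise disjoint supports} in $\mathbb{N}^{\operatorname{e}(S)}$. For distinct isolated factorizations of the \emph{same} Betti element this is the definition of ``isolated''. For isolated factorizations of \emph{different} elements $b_i<_S b_j$ of the chain $\Lambda$, this is exactly Lemma~\ref{lem:disjoint-betti}. Now suppose
\[
z=w_1+\sum_{x\in\operatorname{I}(\Lambda)}p_x\,x=w_2+\sum_{x\in\operatorname{I}(\Lambda)}q_x\,x
\]
with $w_1,w_2\in\operatorname{I}_s(S)$. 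If $p_x\ne q_x$ for some $x$, then, because all other $x'\in\operatorname{I}(\Lambda)$ vanish on the support of $x$, comparing the two expressions on those coordinates forces $x<w_1$ or $x<w_2$; but then Lemma~\ref{lem:isolated} says $w_1$ (or $w_2$) is not isolated, a contradiction. Hence all $p_x=q_x$ and $w_1=w_2$. Uniqueness of the multiset $\{x_1,\ldots,x_t\}$ and of $w$ follows at once, and your second paragraph then applies verbatim to deduce the counting formula from Lemma~\ref{lem:brf:recurrence}.
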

\begin{proof}
   By Theorem~\ref{thm:brf:sorted}, we have $\Lambda \subseteq \operatorname{IBetti}(S)$. Let $s \in S$. If $\operatorname{B}(s; \Lambda) = \emptyset$, then we are done. Let us assume that $\operatorname{B}(s; \Lambda) \ne \emptyset$ and let $z \in \operatorname{B}(s; \Lambda)$. The definition of $\operatorname{B}(s; \Lambda)$ ensures the existence of $w \in \operatorname{I}_s(S)$ and $x_1, \ldots, x_t \in \operatorname{I}(\Lambda)$ such that $z = w + x_1 + \cdots + x_t$. We show that this expression is unique. Let $w_1, w_2 \in \operatorname{I}_s(S)$ and, for each $x \in \operatorname{I}(\Lambda)$, let $p_x$ and $q_x$ be non-negative integers such that
  \[ z = w_1 + \sum_{x \in \operatorname{I}(\Lambda)} p_x x = w_2 + \sum_{x \in \operatorname{I}(\Lambda)} q_x x . \]
  In light of Lemma~\ref{lem:disjoint-betti}, the supports of the elements of $\operatorname{I}(\Lambda)$ are disjoint. If there is $x \in \operatorname{I}(\Lambda)$ such that $p_x \ne q_x$, then either $x < w_1$, or $x < w_2$, contradicting the fact that $w_1, w_2 \in \operatorname{I}_s(S)$, see Lemma~\ref{lem:isolated}. Therefore we have $p_x = q_x$ for every $x \in \Lambda$ and $w_1 = w_2$. 
  
  As a consequence, the union given in Lemma~\ref{lem:brf:recurrence} is disjoint. Moreover, we have 
  \[ \left\lvert\operatorname{B}(s- j u; \Lambda \setminus \{u\}) + \sum\nolimits_{i = 1}^j \operatorname{I}(u)\right\rvert = \left\lvert\operatorname{B}(s- j u; \Lambda \setminus \{u\})\right\rvert \binom{\operatorname{i}(u) + j -1}{j} \]
  and the result follows.
\end{proof}

\subsection{Completing the proof of Theorem~\ref{thm:ces:betti}} \label{sec:thm2:thm2}

We now have all the ingredients necessary to complete the proof of Theorem~\ref{thm:ces:betti}.
    
\begin{proof}[Proof of Theorem \ref{thm:ces:betti}]
First, we prove that if $\eta\in \operatorname{U}( \mathcal{E}(S))$, then $\eta \in \operatorname{U}(\operatorname{Betti}(S))$ and $e_\eta$ is as in the statement. The tools developed in this part of the proof will be helpful when proving the other inclusion. As we follow the proof idea explained in Section~\ref{sec:thm2:proof-outline}, we recommend the reader to have a look at Section~\ref{sec:thm2:proof-outline} before reading this proof. Let $\Lambda= \{d \in \mathcal{E}(S) : d \le_S \eta \}$. Since $\eta \in \operatorname{U}( \mathcal{E}(S))$, we can write $\Lambda=\{b_1<_S \dots<_S b_l\}$ and $b_l = \eta$. For each $i \in \{1, \ldots, l\}$, define $\Lambda_i = \{b_1, \ldots, b_i\}$ and $r_{i-1}(s)$ as in~\eqref{eq:ri:1}, identity which we recall below for the convenience of the reader:
\begin{equation*} 
    \sum_{s \in S} r_{i-1}(s)x^s = \operatorname{H}_S(x) \prod_{j = 1}^{i-1} (1 - x^{b_j})^{-e_{b_j}}.
\end{equation*}
Note that from our hypothesis it follows that $b_1$ is minimal in $\mathcal{E}(S)$. By Theorem~\ref{thm:betti-minimals:omega}, $b_1$ is Betti-minimal. We prove by induction on $i \in \{1, \ldots, l\}$ the following assertions:
  \begin{enumerate}[{\rm (a)}]
      \item \label{item:cyclo:lambda:no-more-betti} if $b \le_S b_i$ for some $b \in \operatorname{Betti}(S)$, then $b \in \Lambda_i$;
      \item \label{item:cyclo:lambda:betti} $b_i \in \operatorname{U}(\operatorname{Betti}(S))$ and $e_{b_i} > 0$;
      \item \label{item:cyclo:lambda:r} $r_i(s) = \left|\operatorname{B}(s; \Lambda_i)\right|$ for every $s \in S$ such that $b_1$ is the only Betti-minimal element with $b_1 \le_S s.$
   \end{enumerate}
  First, we study the case $i =1$. As a consequence of Theorem~\ref{thm:betti-minimals:omega}, $b_1$ Betti-minimal and $\operatorname{i}(b_1) = \mathfrak{d}(b_1) = e_{b_1}+1 \ge 2$.
  In particular,~\eqref{item:cyclo:lambda:no-more-betti} and~\eqref{item:cyclo:lambda:betti} 
  hold for $i = 1$. Note that, by~\eqref{eq:pol:apery}, we have $\operatorname{H}_S(x) = \sum_{\omega \in \Ap(S; b_1)} x^\omega \sum_{j = 0}^\infty x^{j b_1}$. In conjunction with~\eqref{eq:taylor} and $\operatorname{i}(b_1) = e_{b_1}+1$, this yields
\begin{equation} 
  \begin{aligned}\label{eq:rb}
    \sum_{s = 0}^\infty r_1(s) x^s & = \sum_{\omega \in \Ap(S; b_1)} x^\omega \bigg(\sum_{j = 0}^\infty x^{j b_1}\bigg)^{\operatorname{i}(b_1)}\\
                                   & = \sum_{\omega \in \Ap(S; b_1)} x^\omega \sum_{j = 0}^\infty \binom{\operatorname{i}(b_1)+j-1}{j} x^{j b_1} \\ 
                                   & = \sum_{s \in S} \binom{\operatorname{i}(b_1) + q_s - 1}{q_s} x^{s},
\end{aligned}
\end{equation}
where $q_s$ is the unique non-negative integer such that $s - q_s b_1 \in \Ap(S; b_1)$. 
Let $s \in S$ be such that $b_1$ is the only Betti-minimal element with $b_1 \le_S s$. Then from Corollary~\ref{cor:isolated} it follows that $\omega_s = s - q_s b_1$ has only one factorization. By combining Corollary~\ref{cor:brf:1} and~\eqref{eq:rb}, we conclude that $r_1(s) = \left|\operatorname{B}(s; \Lambda_1)\right|,$ as desired.
  
  Now assume that~\eqref{item:cyclo:lambda:no-more-betti},~\eqref{item:cyclo:lambda:betti} and~\eqref{item:cyclo:lambda:r} hold for $i-1 \in \{1, \ldots, l-1\}$ and let us prove that they also hold for $i$.  We prove each of the induction hypotheses for $i$ separately. In doing so, we will use the identities \eqref{eq:ri:values} several times, which, for the sake of readability, we also recall here:
  \begin{equation*} 
      \begin{aligned}
        r_{i-1}(s) &= \mathfrak{d}(s)  \qquad \qquad \, \text{when } \{d \in \mathcal{E}(S) : d \le_S s\} \subseteq \Lambda_{i-1}, \\
        r_{i-1}(s) &= \mathfrak{d}(s) - e_s \qquad \text{when } s \in \operatorname{Minimals}_{\le_S} \left(\mathcal{E}(S) \setminus \Lambda_{i-1}\right).
        \end{aligned}
  \end{equation*}
  
\noindent (a) We proceed by deriving a contradiction. Let us assume that there is $b \in \operatorname{Betti}(S) \setminus \Lambda_{i-1}$ such that $b <_S b_i$. Then there exists an element
    \[ \beta \in \operatorname{Minimals}_{\le_S} \{b \in \operatorname{Betti}(S) \setminus \Lambda_{i-1} : b <_S b_i\}.  \]
    Let $D = \{ d \in \operatorname{Betti}(S) : d <_S \beta \}$. Since $\beta <_S b_i$, from minimality in the choice of $\beta$ it follows  that $D \subseteq \Lambda_{i-1}$. If $D = \emptyset$, then $\beta$ is Betti-minimal and, by Theorem~\ref{thm:betti-minimals:omega}, $\beta \in \mathcal{E}(S)$. We thus obtain $\beta \in \{d \in \mathcal{E}(S) : d <_S b_i\} = \Lambda_{i-1}$, but $\beta \not \in \Lambda_{i-1}$ by definition, a contradiction. We conclude that $\emptyset \ne D \subseteq \Lambda_{i-1}$. Hence, we have $b_1 \in d$, so $b_1 \le_S \beta <_S b_i$ and $b_1$ is the only Betti-minimal element that satisfies $b_1 \le_S \beta$. From our induction hypothesis, we obtain $\left|\operatorname{B}(\beta; \Lambda_{i-1})\right| = r_{i-1}(\beta)$. Note that $\{d \in \mathcal{E}(S) : d  \le_S \beta\} \subseteq \{d \in \mathcal{E}(S) : d  <_S b_i\} = \Lambda_{i-1}$. Hence, by~\eqref{eq:ri:values}, we find that $r_{i-1}(\beta) = \mathfrak{d}(\beta)$, so $\left|\operatorname{B}(\beta; \Lambda_{i-1})\right| = \mathfrak{d}(\beta)$. We can apply Lemma~\ref{lem:brf:sorted} with $u = b_{i-1}$ and $s = \beta$, finding that $\operatorname{B}(\beta; \Lambda_{i-1})$ is connected in $\nabla_\beta$. But we have shown that $\left|\operatorname{B}(b; \Lambda_{i-1})\right| = \mathfrak{d}(b)$ or, equivalently, $\operatorname{Z}(\beta) = \operatorname{B}(\beta; \Lambda_{i-1})$. This contradicts the fact that $\beta \in \operatorname{Betti}(S)$.\\

\noindent    (b) From Lemma~\ref{lem:brf:iso} and~\eqref{item:cyclo:lambda:no-more-betti} it follows that $\operatorname{Z}(b_i) \setminus \operatorname{I}_b(b_i)$, so $\left|\operatorname{B}(b_i; \Lambda_{i-1})\right| = \mathfrak{d}(b_i) - \operatorname{i}(b_i)$. Moreover, by our hypothesis we have $\left|\operatorname{B}(b_i; \Lambda_{i-1})\right| = r_{i-1}(b_i)$. Note that $b_i \in \operatorname{Minimals}_{\le_S} (\mathcal{E}(S) \setminus \Lambda_{i-1})$ by definition of $\Lambda_{i-1}$. Hence, by~\eqref{eq:ri:values} we obtain $r_{i-1}(b_i) = \mathfrak{d}(b_i) - e_{b_i}$.  We conclude that $\mathfrak{d}(b_i) - \operatorname{i}(b_i) = \mathfrak{d}(b_i) - e_{b_i}$, that is, $0 \le \operatorname{i}(b_i) = e_{b_i}$. Since $e_{b_i} \ne 0$, we have $\operatorname{i}(b_i) \ge 1$ and $b_i \in \operatorname{Betti}(S)$ because $b_i$ has at least two factorizations (Corollary~\ref{cor:isolated}). In view of~\eqref{item:cyclo:lambda:no-more-betti}, we have $\{b \in \operatorname{Betti}(S) : b \le_S b_i\} = \Lambda_i$, which 
by hypothesis is totally ordered, so $b \in \operatorname{U}(\operatorname{Betti}(S))$.\\
    
\noindent    (c)  Let $s \in S$ such that $b_1$ is the only Betti-minimal element with $b_1 \le_S s$. Note that thanks to 
(b) we can apply Lemma~\ref{lem:brf:sorted:2} with $u = b_i$. Recall that in \eqref{eq:ri:recurrence} we showed that
  \begin{equation*}
      r_i(s) = \sum_{j = 0}^{q_s} r_{i-1}(s - j b_i) \binom{\operatorname{i}(b_i) + j - 1}{j},
  \end{equation*}
    where $q_s$ is the largest integer such that $s - q_s b_i \in S$. This equation in combination with Lemma~\ref{lem:brf:sorted:2} yields
    \[ r_i(s) = \sum_{j = 0}^{q_s} \left|\operatorname{B}(s - j b_i; \Lambda_{i-1})\right| \binom{\operatorname{i}(b_i) + j - 1}{j} = \left|\operatorname{B}(s; \Lambda_i)\right|, \]
    which finishes the proof by induction.
  
  In the induction we have also shown that $e_{b_i} = \operatorname{i}(b_i)$, see the proof of our hypothesis~\eqref{item:cyclo:lambda:betti}. Since $b_i\in \operatorname{U}(\operatorname{Betti}(S))$, by Theorem~\ref{thm:brf:sorted} we have $\operatorname{nc}(\nabla_{b_i}) = \operatorname{i}(b_i) + 1 = e_{b_i} + 1$. The fact that $e_{b_1} = \operatorname{nc}(\nabla_{b_1})-1$ has been established in Theorem~\ref{thm:betti-minimals:omega}.

  Finally we show that $\operatorname{U}(\operatorname{Betti}(S)) \subseteq \operatorname{U}(\mathcal{E}(S))$. Let $u \in \operatorname{U}(\operatorname{Betti}(S))$. Let us write $\downarrow\! u = \{b \in \operatorname{Betti}(S) : b \le_S u\} = \{b_1 <_S \cdots <_S b_l = u\}$. We prove by induction on $i$ that $b_i \in \operatorname{U}(\mathcal{E}(S))$. Note that $b_1$ is Betti-minimal and, thus, $b_1 \in \operatorname{U}(\mathcal{E}(S))$ by Theorem~\ref{thm:betti-minimals:omega}. Let us assume that $b_1, \ldots, b_{i-1} \in \operatorname{U}(\mathcal{E}(S))$ and let us prove that $b_i \in \operatorname{U}(\mathcal{E}(S))$. Let $\Lambda_{i-1} = \{b_1, \ldots, b_{i-1}\}$. We consider $r_{i-1}(s)$ as in~\eqref{eq:ri:1}. In light of the induction hypothesis~\eqref{item:cyclo:lambda:r}, we have $r_{i-1}(s) = \left|\operatorname{B}(s, \Lambda_{i-1})\right|$ for every $s \in S$ such that $b_1$ is the only Betti-minimal element with $b_1 \le_S s$. In particular, we have $r_{i-1}(b_i) = \left|\operatorname{B}(b_i, \Lambda_{i-1})\right|$ and, thus, $r_{i-1}(b_i) = \mathfrak{d}(b_i) - \operatorname{i}(b_i)$, where we used Lemma~\ref{lem:brf:iso}. From Theorem~\ref{thm:brf:sorted}, we find that $\operatorname{i}(b_i) > 0$. Therefore, $r_{i-1}(b_i) < \mathfrak{d}(b_i)$ and, by~\eqref{eq:ri:2}, there exists $d \in \mathcal{E}(S) \setminus \Lambda_{i-1}$ with $d \le_S b_i$. Hence, there is $\alpha \in \operatorname{Minimals}_{\le_S} \{d \in \mathcal{E}(S) \setminus \Lambda_{i-1}\}$ with $\alpha \le_S b_i$. By~\eqref{eq:ri:values} we have $r_{i-1}(\alpha) = \mathfrak{d}(\alpha) - e_\alpha$. From the induction hypothesis~\eqref{item:cyclo:lambda:r}, we obtain $r_{i-1}(\alpha) = \left|\operatorname{B}(\alpha, \Lambda_{i-1})\right|$. Since $0 < r_1(s) \le r_{i-1}(s)$ by \eqref{eq:rb} and \eqref{eq:ri:recurrence}, we have $\operatorname{B}(\alpha, \Lambda_{i-1}) \ne \emptyset$. In view of Lemma~\ref{lem:brf:iso}, $\operatorname{B}(\alpha; \Lambda_{i-1}) = \operatorname{Z}(\alpha) \setminus \operatorname{I}_b(\alpha)$, so $1 \le r_{i-1}(\alpha) = \mathfrak{d}(\alpha) - \operatorname{i}(\alpha)$. We find that $\operatorname{i}(\alpha) = e_\alpha \ne 0$. We have $0 \ne e_\alpha = \operatorname{i}(\alpha)$. We conclude that $\alpha$ is a Betti element with $\alpha \le_S b_i$. Since $\alpha \not \in \Lambda_{i-1}$ by definition, we must have $ b_i = \alpha \in \mathcal{E}(S)$ and $\{d \in \mathcal{E}(S):  d \le_S b_i\} = \{b_1, \ldots, b_i\}$. We obtain 
  $b_i \in \operatorname{U}(\mathcal{E}(S)),$ as wanted.
\end{proof}

\begin{example} \label{ex:ces:betti}
Here we can see Theorem~\ref{thm:ces:betti} in 
action for a couple of numerical semigroups.
\begin{enumerate}[{\rm a)}]
    \item We consider again the semigroup from Example~\ref{ex:isolated}. Let $S=\langle 10,15,16,17,19\rangle$. Recall that the Hasse diagram of $(\operatorname{Betti}(S),\le_S)$ looks as follows:
 \begin{center}
\begin{tikzpicture}[>=latex',line join=bevel,]
  \pgfsetlinewidth{1bp}
\pgfsetcolor{black}
  \draw [<-] (51.0bp,57.817bp) .. controls (51.0bp,47.547bp) and (51.0bp,32.408bp)  .. (51.0bp,22.149bp);
  \draw [<-] (11.0bp,57.817bp) .. controls (11.0bp,47.547bp) and (11.0bp,32.408bp)  .. (11.0bp,22.149bp);
  \draw [<-] (17.25bp,59.938bp) .. controls (24.721bp,49.105bp) and (37.152bp,31.079bp)  .. (44.662bp,20.19bp);
\begin{scope}
  \definecolor{strokecol}{rgb}{0.0,0.0,0.0};
  \pgfsetstrokecolor{strokecol}
  \draw (51.0bp,69.0bp) ellipse (11.0bp and 11.0bp);
  \draw (51.0bp,69.0bp) node {48};
\end{scope}
\begin{scope}
  \definecolor{strokecol}{rgb}{0.0,0.0,0.0};
  \pgfsetstrokecolor{strokecol}
  \draw (51.0bp,11.0bp) ellipse (11.0bp and 11.0bp);
  \draw (51.0bp,11.0bp) node {32};
\end{scope}
\begin{scope}
  \definecolor{strokecol}{rgb}{0.0,0.0,0.0};
  \pgfsetstrokecolor{strokecol}
  \draw (11.0bp,69.0bp) ellipse (11.0bp and 11.0bp);
  \draw (11.0bp,69.0bp) node {57};
\end{scope}
\begin{scope}
  \definecolor{strokecol}{rgb}{0.0,0.0,0.0};
  \pgfsetstrokecolor{strokecol}
  \draw (11.0bp,11.0bp) ellipse (11.0bp and 11.0bp);
  \draw (11.0bp,11.0bp) node {30};
\end{scope}
\begin{scope}
  \definecolor{strokecol}{rgb}{0.0,0.0,0.0};
  \pgfsetstrokecolor{strokecol}
  \draw (171.0bp,11.0bp) ellipse (11.0bp and 11.0bp);
  \draw (171.0bp,11.0bp) node {36};
\end{scope}
\begin{scope}
  \definecolor{strokecol}{rgb}{0.0,0.0,0.0};
  \pgfsetstrokecolor{strokecol}
  \draw (131.0bp,11.0bp) ellipse (11.0bp and 11.0bp);
  \draw (131.0bp,11.0bp) node {35};
\end{scope}
\begin{scope}
  \definecolor{strokecol}{rgb}{0.0,0.0,0.0};
  \pgfsetstrokecolor{strokecol}
  \draw (91.0bp,11.0bp) ellipse (11.0bp and 11.0bp);
  \draw (91.0bp,11.0bp) node {34};
\end{scope}
\end{tikzpicture}
\end{center}
Hence the minimal elements of $\operatorname{Betti}(S)$ are $30, 32, 34, 35$ and $36$. The set $\operatorname{U}(\operatorname{Betti}(S))$ consists of these Betti-minimal elements and the Betti element $48$. Therefore, by Theorem~\ref{thm:ces:betti}, we conclude that $\operatorname{U}(\mathcal{E}(S)) = \{30, 32, 34, 35, 36, 48\}$ and we can determine the exponents $e_s$ of these elements from their number of isolated factorizations.

\item An interesting application is finding Betti elements from $\mathcal{E}(S)$. Let us consider the semigroup $S=\langle 3,5,7\rangle$ of Example~\ref{ex:ces}\ref{item:ex:b}. We gave the first entries of the cyclotomic exponent sequence of $S$. The smallest elements of $\mathcal{E}(S)$ are $10,12,14,17, 19, \ldots.$ Note that $\operatorname{U}(\mathcal{E}(S)) = \{10, 12, 14\}$ since any other element in $\mathcal{E}(S)$ can be written as $\alpha + 3 j$ for some $\alpha \in \{10, 12, 14\}$ and $j \ge 0$. Therefore, we have $\operatorname{U}(\operatorname{Betti}(S)) = \{10, 12, 14\} = \operatorname{Minimals}_{\le_S} \operatorname{Betti}(S)$, and each one of these elements has only two factorizations (their cyclotomic exponents are $1$).
\end{enumerate}
\end{example}

\section{Betti-sorted and Betti-divisible numerical semigroups} \label{sec:thm3}

  In this section we prove Theorem~\ref{thm:ces:char}, which characterizes Betti-sorted and Betti-divisible numerical semigroups in terms of their cyclotomic exponent sequences. Recall that $S$ is Betti-sorted if  $\operatorname{Betti}(S)$ is totally ordered by $\le_S,$ and that $S$ is Betti-divisible if  $\operatorname{Betti}(S)$ is totally ordered by the divisibility order in $\mathbb{N}$. Our characterizations are consequences of Theorem~\ref{thm:ces:betti}. We will use the following result on ordered sets.
   
  \begin{lemma} \label{lem:ordered-sets}
    Let $(X,\le)$ be an ordered set. Then $X$ is totally ordered if and only if $\operatorname{U}(X)$ is totally ordered. 
  \end{lemma}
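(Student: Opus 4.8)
The plan is to dispatch the forward implication by a one-line observation and to reduce the backward implication to the sharper claim that $X=\operatorname{U}(X)$.

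For the forward direction: if $(X,\le)$ is totally ordered, then every subset of $X$ is totally ordered; in particular $\downarrow\! x$ is totally ordered for every $x\in X$, so $\operatorname{U}(X)=X$, which is totally ordered by hypothesis.

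For the backward direction, assume $\operatorname{U}(X)$ is totally ordered and let me argue $X=\operatorname{U}(X)$, which at once gives that $X$ is totally ordered. The argument will use that the posets to which the lemma is applied have the descending chain condition (in all our applications every principal order ideal $\downarrow\! x$ is finite), so that nonempty subsets have minimal elements. Suppose, for contradiction, that $B:=X\setminus\operatorname{U}(X)$ is nonempty, and pick $x$ minimal in $B$ with respect to $\le$. By definition of $\operatorname{U}(X)$ the ideal $\downarrow\! x$ is not totally ordered, so there are incomparable $y,z\in\downarrow\! x$. Neither equals $x$: if, say, $y=x$, then $z\le x=y$ would make $y$ and $z$ comparable; hence $y<x$ and $z<x$. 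By minimality of $x$ in $B$, neither $y$ nor $z$ lies in $B$, so $y,z\in\operatorname{U}(X)$. Then $\operatorname{U}(X)$ contains an incomparable pair, contradicting the hypothesis. Thus $B=\emptyset$, i.e.\ $X=\operatorname{U}(X)$, completing the proof.

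The only delicate point, and the one the write-up must handle honestly, is that the equivalence is false for a completely arbitrary poset: if no principal ideal $\downarrow\! x$ happens to be a chain, then $\operatorname{U}(X)=\emptyset$ is vacuously totally ordered while $X$ itself need not be. Hence the proof must invoke some chain condition; this is harmless in the present setting because $(S,\le_S)$ and all of its subsets, as well as the divisibility order restricted to $\mathcal E(S)$ or $\operatorname{Betti}(S)$, have finite principal order ideals. I would include a short parenthetical to that effect so that the abstract statement is applied only where it is valid.
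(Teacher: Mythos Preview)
Your argument is essentially the same as the paper's: both pick a minimal element of $X\setminus\operatorname{U}(X)$ and derive a contradiction (the paper concludes directly that $\downarrow\!\alpha$ is a chain since everything strictly below $\alpha$ lies in the totally ordered set $\operatorname{U}(X)$, while you extract an incomparable pair inside $\operatorname{U}(X)$, but this is only a cosmetic difference). Your observation about the need for a descending-chain condition is correct and in fact applies equally to the paper's own proof, which silently assumes that $\operatorname{Minimals}_{\le}(X\setminus\operatorname{U}(X))$ is nonempty; as you note, this is harmless in the intended applications to subsets of $(S,\le_S)$ or of $(\mathbb{N},\mid)$, where all principal order ideals are finite.
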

  \begin{proof}
    First, if $X$ is totally ordered, then any subset of $X$, and in particular $\operatorname{U}(X)$, is totally ordered. Now let us assume that $\operatorname{U}(X)$ is totally ordered. Suppose $\operatorname{U}(X) \ne X$ in order to obtain a contradiction. Then we can choose $\alpha \in \operatorname{Minimals}_{\le} \left( X \setminus \operatorname{U}(X)\right)$. We have $\{a \in X: a < \alpha\} \subseteq \operatorname{U}(X)$ by minimality of $\alpha$. Thus $\downarrow\! \alpha$ is of the form $\{a_1 < \cdots < a_k < \alpha\}$ for some $k \ge 0$ and $a_1, \ldots, a_k  \in \operatorname{U}(X)$. We conclude that $\alpha \in \operatorname{U}(X)$, a contradiction. Therefore, $X = \operatorname{U}(X)$ and $X$ is totally ordered.
  \end{proof}
   
  \begin{lemma} \label{lem:betti-sorted}
    Let $S$ be a numerical semigroup. Then $S$ is Betti-sorted if and only if  $\mathcal{E}(S)$ is totally ordered by $\le_S$. Moreover, if this is the case, then $\operatorname{Betti}(S) = \mathcal{E}(S)$.
  \end{lemma}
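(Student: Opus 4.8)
The plan is to derive this lemma purely order-theoretically by combining Theorem~\ref{thm:ces:betti} with Lemma~\ref{lem:ordered-sets}; no new semigroup-theoretic input should be required. Throughout, I would regard both $\operatorname{Betti}(S)$ and $\mathcal{E}(S)$ as posets under the restriction of $\le_S$, which is legitimate because $\le_S$ is an order relation on $\mathbb{Z}$ (so it restricts to an order relation on any subset) and each down-set $\downarrow\! s$ is finite.

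First I would establish the equivalence. By definition, $S$ is Betti-sorted exactly when $(\operatorname{Betti}(S),\le_S)$ is totally ordered. Applying Lemma~\ref{lem:ordered-sets} with $X=\operatorname{Betti}(S)$, this holds if and only if $\operatorname{U}(\operatorname{Betti}(S))$ is totally ordered. By Theorem~\ref{thm:ces:betti} we have $\operatorname{U}(\operatorname{Betti}(S))=\operatorname{U}(\mathcal{E}(S))$, so the latter is equivalent to $\operatorname{U}(\mathcal{E}(S))$ being totally ordered. A second application of Lemma~\ref{lem:ordered-sets}, this time with $X=\mathcal{E}(S)$, shows that this is equivalent to $(\mathcal{E}(S),\le_S)$ being totally ordered, which is the claimed condition.

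For the ``moreover'' part, I would assume $S$ is Betti-sorted, so that both $\operatorname{Betti}(S)$ and, by the equivalence just proved, $\mathcal{E}(S)$ are totally ordered by $\le_S$. In any totally ordered set $X$ every down-set $\downarrow\! x$ is a chain, hence $\operatorname{U}(X)=X$; applying this to $X=\operatorname{Betti}(S)$ and to $X=\mathcal{E}(S)$ gives $\operatorname{U}(\operatorname{Betti}(S))=\operatorname{Betti}(S)$ and $\operatorname{U}(\mathcal{E}(S))=\mathcal{E}(S)$. Combining these with $\operatorname{U}(\operatorname{Betti}(S))=\operatorname{U}(\mathcal{E}(S))$ from Theorem~\ref{thm:ces:betti} yields $\operatorname{Betti}(S)=\mathcal{E}(S)$.

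I do not expect a genuine obstacle here: the substantive content has already been absorbed into Theorem~\ref{thm:ces:betti} and Lemma~\ref{lem:ordered-sets}, and what remains is bookkeeping. The only point that needs a moment's care is invoking Lemma~\ref{lem:ordered-sets} with the correct ambient poset in each of its two uses, and keeping straight that ``totally ordered'' for $\mathcal{E}(S)$ in the statement refers to $\le_S$ (the divisibility order enters only in part~b) of Theorem~\ref{thm:ces:char}, proved separately).
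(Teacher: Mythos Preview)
Your proof is correct and follows essentially the same approach as the paper: apply Lemma~\ref{lem:ordered-sets} to each of $\operatorname{Betti}(S)$ and $\mathcal{E}(S)$ and bridge them via $\operatorname{U}(\operatorname{Betti}(S))=\operatorname{U}(\mathcal{E}(S))$ from Theorem~\ref{thm:ces:betti}. Your treatment is in fact more explicit than the paper's, which compresses the argument into one sentence and leaves the ``moreover'' clause implicit; your observation that $\operatorname{U}(X)=X$ whenever $X$ is a chain is exactly what is needed to extract $\operatorname{Betti}(S)=\mathcal{E}(S)$.
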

  \begin{proof}
   In view of Theorem~\ref{thm:ces:betti} and Lemma~\ref{lem:ordered-sets}, $S$ is Betti-sorted if and only if $\operatorname{U}(\operatorname{Betti}(S)) = \operatorname{U}(\mathcal{E}(S))$ is totally ordered by $\le_S$ or, equivalently, $\mathcal{E}(S)$ is totally ordered by $\le_S$.
\end{proof}

This gives the following alternative proof of the fact that Betti-sorted numerical semigroups are complete intersections. For the original proof we refer to \cite{isolated}, where in fact the authors show the stronger result  that Betti-sorted numerical semigroups are free.

\begin{corollary} \label{cor:betti-sorted:ci}
  If $S$ is a Betti-sorted numerical semigroup, then $S$ is a complete intersection.
\end{corollary}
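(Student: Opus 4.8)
The plan is to derive identity~\eqref{eq:ci:hilbert} for $\operatorname{H}_S$ and then invoke Proposition~\ref{prop:ci-char}. First I would record the starting point, valid for \emph{any} numerical semigroup: combining $\operatorname{P}_S(x)=(1-x)\operatorname{H}_S(x)$ with~\eqref{eq:cyc-exp}, Theorem~\ref{thm:ces:generators} (so that $e_1=1$, $e_n=-1$ for every minimal generator $n$, and $e_j=0$ for every $j\ge 2$ that either lies outside $S$ or has a unique factorization), and~\eqref{eq:dseries}, one gets
\[
\operatorname{H}_S(x)=\frac{\prod_{d\in\mathcal{E}(S)}(1-x^d)^{e_d}}{\prod_{n\in A}(1-x^n)},
\]
where $A$ is the minimal generating system of $S$; this is just~\eqref{eq:cyc-exp-H} rewritten.

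Next I would bring in the hypothesis that $S$ is Betti-sorted. By Lemma~\ref{lem:betti-sorted}, $\mathcal{E}(S)$ is then totally ordered by $\le_S$ and moreover $\operatorname{Betti}(S)=\mathcal{E}(S)$. Since in a totally ordered poset $(X,\le)$ every down-set $\downarrow\! x$ is a chain, we have $\operatorname{U}(X)=X$; applying this to $\mathcal{E}(S)$ gives $\operatorname{U}(\mathcal{E}(S))=\mathcal{E}(S)=\operatorname{Betti}(S)=\operatorname{U}(\operatorname{Betti}(S))$. Consequently Theorem~\ref{thm:ces:betti} applies to \emph{every} element of $\mathcal{E}(S)$, not merely the $\le_S$-minimal ones, and yields $e_d=\operatorname{nc}(\nabla_d)-1$ for each $d\in\mathcal{E}(S)=\operatorname{Betti}(S)$. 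Substituting this into the displayed formula and using $\operatorname{Betti}(S)=\mathcal{E}(S)$ turns it into exactly~\eqref{eq:ci:hilbert}, so Proposition~\ref{prop:ci-char} gives that $S$ is a complete intersection.

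As an alternative ending one can argue at the level of minimal presentations: $\operatorname{Betti}(S)$ is finite, so $\operatorname{Betti}(S)=\mathcal{E}(S)$ forces $\mathbf{e}$ to have finite support, whence $S$ is cyclotomic and Proposition~\ref{prop:sum-e} gives $\sum_{j\ge 1}e_j=0$; using $e_1=1$ and $e_n=-1$ on the $\operatorname{e}(S)$ minimal generators this reads $\sum_{d\in\mathcal{E}(S)}e_d=\operatorname{e}(S)-1$, and combining with $\operatorname{nc}(\nabla_b)-1=e_b$ (again from Theorem~\ref{thm:ces:betti}) shows that a minimal presentation has cardinality $\sum_{b\in\operatorname{Betti}(S)}(\operatorname{nc}(\nabla_b)-1)=\operatorname{e}(S)-1$, which is the defining property of a complete intersection.

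I do not anticipate a real obstacle: the substance lives entirely in Theorem~\ref{thm:ces:betti} and Lemma~\ref{lem:betti-sorted}, which are available. The only point deserving an explicit line is the elementary observation that a totally ordered set equals its own $\operatorname{U}(\cdot)$, which is precisely what lets us upgrade Theorem~\ref{thm:ces:betti} from minimal Betti elements to all of $\operatorname{Betti}(S)$.
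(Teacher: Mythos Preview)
Your proof is correct and follows essentially the same approach as the paper: use Lemma~\ref{lem:betti-sorted} to get $\mathcal{E}(S)=\operatorname{U}(\mathcal{E}(S))=\operatorname{Betti}(S)$, apply Theorem~\ref{thm:ces:betti} to obtain $e_b=\operatorname{nc}(\nabla_b)-1$ for every $b\in\operatorname{Betti}(S)$, substitute into the Hilbert series formula coming from Theorem~\ref{thm:ces:generators}, and conclude via Proposition~\ref{prop:ci-char}. Your alternative ending via Proposition~\ref{prop:sum-e} is also valid and is not in the paper, but it is a minor variation rather than a different route.
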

\begin{proof}
 In view of Lemma~\ref{lem:betti-sorted}, we have $\mathcal{E}(S) = \mathrm{U}(\mathcal{E}(S))$. By applying Theorem~\ref{thm:ces:betti} we find that $e_b = \operatorname{nc}(\nabla_b)-1$ for every $b \in \operatorname{Betti}(S) = \mathcal{E}(S)$. Let $A$ be the minimal system of generators of $S$. With the help of Theorem~\ref{thm:ces:generators}, we conclude that
\begin{equation*}
    \operatorname{H}_S(x)=\frac{\prod_{b\in\operatorname{Betti}(S)}(1-x^b)^{\operatorname{nc}(\nabla_b)-1}}
{\prod_{n \in A}(1-x^{n})}.
\end{equation*}
Therefore, $S$ is a complete intersection by Proposition~\ref{prop:ci-char}.
\end{proof}

\begin{lemma}
  \label{lem:betti-divisible}
    Let $S$ be a numerical semigroup. Then $S$ is Betti-divisible if and only if $\mathcal{E}(S)$ is totally ordered by the divisibility order.
\end{lemma}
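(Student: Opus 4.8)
The plan is to reduce the statement to Lemma~\ref{lem:betti-sorted} by means of the elementary observation that, on $S$, the divisibility order is coarser than $\le_S$. Indeed, if $a$ and $b$ are positive integers with $a\in S$ and $a\mid b$, then $b-a=(b/a-1)a$ is a non-negative multiple of $a$ and hence lies in $S$, so $a\le_S b$. Consequently, any subset of $S\setminus\{0\}$ that is totally ordered by divisibility is automatically totally ordered by $\le_S$. Since both $\operatorname{Betti}(S)$ and $\mathcal{E}(S)$ are contained in $S\setminus\{0\}$ — the latter by Remark~\ref{rem:hs} — this observation applies to both sets in question.

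For the forward direction I would assume $S$ is Betti-divisible, that is, $\operatorname{Betti}(S)$ is totally ordered by divisibility. By the observation above, $\operatorname{Betti}(S)$ is then totally ordered by $\le_S$, so $S$ is Betti-sorted, and Lemma~\ref{lem:betti-sorted} gives $\operatorname{Betti}(S)=\mathcal{E}(S)$. Hence $\mathcal{E}(S)$ inherits the divisibility total order of $\operatorname{Betti}(S)$.

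For the converse I would assume $\mathcal{E}(S)$ is totally ordered by divisibility. Again the observation shows that $\mathcal{E}(S)$ is totally ordered by $\le_S$, so Lemma~\ref{lem:betti-sorted} applies and yields that $S$ is Betti-sorted and that $\operatorname{Betti}(S)=\mathcal{E}(S)$; hence $\operatorname{Betti}(S)$ is totally ordered by divisibility, i.e.\ $S$ is Betti-divisible.

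I do not expect a genuine obstacle here: all the substance is already contained in Lemma~\ref{lem:betti-sorted} (and, through it, in Theorem~\ref{thm:ces:betti}). The only point deserving a line of care is that the elements of $\mathcal{E}(S)$ must be positive integers lying in $S$ in order for the implication ``$a\mid b\Rightarrow a\le_S b$'' to be available for them, which is exactly what Remark~\ref{rem:hs} provides. As with Corollary~\ref{cor:betti-sorted:ci}, one then also obtains at no extra cost the fact that Betti-divisible numerical semigroups are complete intersections, since they form a subclass of the Betti-sorted ones.
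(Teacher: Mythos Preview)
Your proof is correct and follows essentially the same approach as the paper's: both reduce to Lemma~\ref{lem:betti-sorted} via the observation that divisibility refines $\le_S$ on $S$, so that a subset of $S$ totally ordered by divisibility is also totally ordered by $\le_S$. The paper's proof leaves this step implicit, whereas you make it explicit (and justify the applicability to $\mathcal{E}(S)$ via Remark~\ref{rem:hs}); otherwise the arguments coincide.
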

\begin{proof}
  Let $S$ be a numerical semigroup such that either $\operatorname{Betti}(S)$ or $\mathcal{E}(S)$ is totally ordered by the divisibility order. Then, by Lemma~\ref{lem:betti-sorted}, $S$ is Betti-sorted and $\operatorname{Betti}(S) = \mathcal{E}(S)$. It follows that $\operatorname{Betti}(S)$ and $\mathcal{E}(S)$ are totally ordered by the divisibility order. 
\end{proof}

Betti-divisible numerical semigroups are rare, but they have a very rich structure. In fact, it can be shown that these are the numerical semigroups that are free for any arrangement of their minimal generators, see \cite[Theorem 7.10]{isolated}. 

\begin{lemma} \label{lem:unique-betti}
  Let $S$ be a numerical semigroup minimally generated by $A$. Then $S$ has a unique Betti element if and only if $\mathcal{E}(S)$ is a singleton.
\end{lemma}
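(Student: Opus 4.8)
The plan is to deduce this statement directly from Lemma~\ref{lem:betti-sorted}, since having a unique Betti element is the extreme case of being Betti-sorted. The key point is that a one-element set is automatically totally ordered with respect to any relation, so both ``$|\operatorname{Betti}(S)| = 1$'' and ``$|\mathcal{E}(S)| = 1$'' feed into the equivalence of Lemma~\ref{lem:betti-sorted} on one side or the other.

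For the forward implication, I would assume $S$ has a unique Betti element $b$, so that $\operatorname{Betti}(S) = \{b\}$. This set is trivially totally ordered by $\le_S$, hence $S$ is Betti-sorted, and Lemma~\ref{lem:betti-sorted} then gives $\operatorname{Betti}(S) = \mathcal{E}(S)$; therefore $\mathcal{E}(S) = \{b\}$ is a singleton. For the converse, I would assume $\mathcal{E}(S)$ is a singleton; being a one-element set it is totally ordered by $\le_S$, so Lemma~\ref{lem:betti-sorted} applies in the other direction, yielding that $S$ is Betti-sorted and $\operatorname{Betti}(S) = \mathcal{E}(S)$. The latter being a singleton, $S$ has exactly one Betti element.

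The only point that needs a word of care is the degenerate semigroup $S = \mathbb{N}$: there $\operatorname{Betti}(S) = \emptyset$ and $\mathbf{e}$ is identically zero, so $\mathcal{E}(S) = \emptyset$ as well; since the empty set is neither a singleton nor a set with a unique Betti element, both sides of the equivalence fail and the statement holds vacuously (and Lemma~\ref{lem:betti-sorted} still literally applies, as the empty set is totally ordered). Thus there is no real obstacle here; the argument is a short corollary of Lemma~\ref{lem:betti-sorted}, and together with Lemmas~\ref{lem:betti-sorted} and~\ref{lem:betti-divisible} it completes the proof of Theorem~\ref{thm:ces:char}. If anything, the ``hardest'' part is purely bookkeeping: making explicit that a singleton is totally ordered so that Lemma~\ref{lem:betti-sorted} can be invoked in the direction needed, and recording the byproduct $\operatorname{Betti}(S) = \mathcal{E}(S)$ that comes for free.

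\begin{proof}
Suppose first that $S$ has a unique Betti element. Then $\operatorname{Betti}(S)$ is a singleton, hence totally ordered by $\le_S$, so $S$ is Betti-sorted. By Lemma~\ref{lem:betti-sorted} we have $\operatorname{Betti}(S) = \mathcal{E}(S)$, and therefore $\mathcal{E}(S)$ is a singleton. Conversely, if $\mathcal{E}(S)$ is a singleton, then it is totally ordered by $\le_S$, so Lemma~\ref{lem:betti-sorted} shows that $S$ is Betti-sorted and $\operatorname{Betti}(S) = \mathcal{E}(S)$. Thus $\operatorname{Betti}(S)$ is a singleton, i.e.\ $S$ has a unique Betti element.
\end{proof}
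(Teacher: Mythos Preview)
Your proof is correct and follows essentially the same approach as the paper: both argue that a singleton is totally ordered by $\le_S$, invoke Lemma~\ref{lem:betti-sorted} to conclude that $S$ is Betti-sorted with $\operatorname{Betti}(S) = \mathcal{E}(S)$, and read off that the other set is a singleton. The paper merely compresses the two directions into a single sentence, while you spell them out separately and add the harmless aside about $S=\mathbb{N}$.
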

\begin{proof}
  Let $S$ be a numerical semigroup such that $\operatorname{Betti}(S)$ or $\mathcal{E}(S)$ is a singleton. Then $S$ is Betti-sorted by Lemma~\ref{lem:betti-sorted} and $\operatorname{Betti}(S) = \mathcal{E}(S)$, so both $\operatorname{Betti}(S)$ and $\mathcal{E}(S)$ are singletons.
\end{proof}

Theorem~\ref{thm:ces:char} now follows by combining Lemmas~\ref{lem:betti-sorted},~\ref{lem:betti-divisible} and~\ref{lem:unique-betti}. 

\section{Applications to cyclotomic numerical semigroups and open questions} \label{sec:cns}
 
We can now use our freshly enriched insight on the connections between cyclotomic exponent sequences and Betti elements to prove that certain cyclotomic numerical semigroups are complete intersections. We do so by showing that these numerical semigroups satisfy the hypotheses of Proposition~\ref{prop:ci-char} and are, as such, complete intersections. This approach has already been carried out in Corollary~\ref{cor:betti-sorted:ci}, where we showed that Betti-sorted numerical semigroups are complete intersections. In fact, here we extend Corollary~\ref{cor:betti-sorted:ci} to a larger family of numerical semigroups. First, let us consider the following conjectures.

\begin{conjecture} \label{con:msg}
	Let $S$ be a cyclotomic numerical semigroup and let $\mathbf{e}$ be its cyclotomic exponent sequence. Then $n \in \mathbb{N}$ is a minimal generator of $S$ if and only if $e_n < 0$.
\end{conjecture}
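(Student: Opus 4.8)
\textbf{Proof proposal.} The plan is to separate the two implications. The direction ``$n$ a minimal generator $\Rightarrow e_n<0$'' is already contained in Theorem~\ref{thm:ces:generators}(c), which gives the sharper $e_n=-1$; this needs only $S\neq\mathbb N$ (for $S=\mathbb N$ one has $\mathbf e\equiv 0$ and the statement is read with its trivial convention). So the whole content of the conjecture is the converse for cyclotomic $S\neq\mathbb N$. By Theorem~\ref{thm:ces:generators}, an index $n$ with $e_n<0$ automatically satisfies $n\ge 2$, $n\in S$, and is either a minimal generator or an element of $\mathcal E(S)$ with $\mathfrak d(n)\ge 2$; hence the conjecture is \emph{equivalent} to the positivity statement that $e_d>0$ for every $d\in\mathcal E(S)$ whenever $S$ is cyclotomic. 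The first step is to push this positivity as far as the results of Section~\ref{sec:thm2} already allow: by Theorem~\ref{thm:betti-minimals:omega} one has $e_\alpha=\mathfrak d(\alpha)-1\ge 1$ for $\alpha\in\operatorname{Minimals}_{\le_S}\mathcal E(S)$, and by Theorem~\ref{thm:ces:betti} one has $e_b=\operatorname{nc}(\nabla_b)-1\ge 1$ for every $b\in\operatorname{U}(\mathcal E(S))=\operatorname{U}(\operatorname{Betti}(S))$. Thus the conjecture reduces to showing that, for cyclotomic $S$, the set $\mathcal E(S)\setminus\operatorname{U}(\mathcal E(S))$ contributes no negative exponents.

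From here I see two routes. The first is to prove that a cyclotomic numerical semigroup cannot have a non-forest Betti poset, i.e. $\operatorname{U}(\operatorname{Betti}(S))=\operatorname{Betti}(S)$: one would take $d\in\mathcal E(S)\setminus\operatorname{U}(\mathcal E(S))$ minimal with respect to $\le_S$, so that $\{b\in\operatorname{Betti}(S):b\le_S d\}$ carries an antichain $\{b',b''\}$ with $b',b''<_S d$ and $b',b''\in\operatorname{U}(\operatorname{Betti}(S))$, and attempt to contradict the cyclotomicity of $S$ --- for instance using the factorization $\operatorname{P}_S=\prod_{n\in\mathcal D}\Phi_n^{h_n}$ of Proposition~\ref{prop:cyclotomic} (which, since $1\notin\mathcal D$, yields the closed form $e_m=\sum_{n\in\mathcal D,\ m\mid n}h_n\,\mu(n/m)$), or the symmetry of $S$, or the relation $\sum_{j\ge 1}e_j=0$ of Proposition~\ref{prop:sum-e} combined with the $\{-1,0,1\}$ sign pattern of the coefficients of $\operatorname{P}_S$. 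The second route is to compute $e_d$ directly for such $d$: one would extend the recursion~\eqref{eq:ri:recurrence} and the identification $r_{i-1}(s)=\lvert\operatorname{B}(s;\Lambda_{i-1})\rvert$ past the chain locus, keeping track of the ``defect'' introduced at each antichain, and show that the total is still positive, using the finiteness of the support of $\mathbf e$ as the crucial extra input.

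The main obstacle is precisely the passage from a totally ordered $\downarrow\! d$ to an arbitrary one. When $\downarrow\! d$ is not a chain, both Lemma~\ref{lem:brf:sorted} and the identity $r_{i-1}(s)=\lvert\operatorname{B}(s;\Lambda_{i-1})\rvert$ fail, the graph $\nabla_d$ may have several non-singleton $R$-classes, and the alternating sum that computes $e_d$ has no evident sign. I expect that a complete proof would require either a genuinely new structural constraint on cyclotomic numerical semigroups --- but a proof that their Betti poset is always a forest would, via Corollary~\ref{cor:cyclo:forest:3}, settle Conjecture~\ref{con:cyclo-ci} in full --- or a delicate positivity estimate on the continued recursion. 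A realistic intermediate target, which can be recorded unconditionally, is the case $\operatorname{U}(\operatorname{Betti}(S))=\operatorname{Betti}(S)$: there Theorem~\ref{thm:ces:betti} gives $e_b=\operatorname{nc}(\nabla_b)-1\ge 1$ for every $b\in\operatorname{Betti}(S)=\mathcal E(S)$, so $e_n<0$ forces, by Theorem~\ref{thm:ces:generators}, that $n$ is a minimal generator, proving Conjecture~\ref{con:msg} for that family.
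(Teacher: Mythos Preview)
Your proposal is not a proof but an outline of approaches, and this is appropriate: the statement is left open in the paper as well. There is no ``paper's own proof'' to compare against; the paper explicitly presents Conjecture~\ref{con:msg} as open and shows (Proposition~\ref{prop:conjectures}) that together with Conjecture~\ref{con:betti} it is equivalent to the main Conjecture~\ref{con:cyclo-ci}. Your identification of the structure --- one direction is Theorem~\ref{thm:ces:generators}(c), the other is equivalent to $e_d>0$ for all $d\in\mathcal E(S)$, and the obstruction lies beyond $\operatorname U(\mathcal E(S))$ --- matches the paper's own framing in Section~\ref{sec:cns}.

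There is, however, a genuine error in your last paragraph. You claim that under the hypothesis $\operatorname U(\operatorname{Betti}(S))=\operatorname{Betti}(S)$ one gets $\operatorname{Betti}(S)=\mathcal E(S)$ and hence Conjecture~\ref{con:msg}. This does not follow. Theorem~\ref{thm:ces:betti} gives $\operatorname U(\operatorname{Betti}(S))=\operatorname U(\mathcal E(S))$, so your hypothesis yields only $\operatorname{Betti}(S)=\operatorname U(\mathcal E(S))\subseteq\mathcal E(S)$; there may still be elements of $\mathcal E(S)\setminus\operatorname U(\mathcal E(S))$ with negative exponent, which is exactly what you are trying to rule out. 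This is why the paper's Corollary~\ref{cor:cyclo:forest:1} takes $\operatorname U(\mathcal E(S))=\mathcal E(S)$ as the hypothesis (not $\operatorname U(\operatorname{Betti}(S))=\operatorname{Betti}(S)$), why Corollary~\ref{cor:cyclo:forest:2} assumes Conjecture~\ref{con:msg} in addition to $\operatorname U(\operatorname{Betti}(S))=\operatorname{Betti}(S)$, and why Corollary~\ref{cor:cyclo:forest:3} needs both forest conditions simultaneously. Your ``unconditional intermediate target'' is therefore not established by the argument you give; replace the hypothesis by $\operatorname U(\mathcal E(S))=\mathcal E(S)$ and the claim becomes correct and coincides with Corollary~\ref{cor:cyclo:forest:1}.
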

\begin{conjecture} \label{con:betti}
		Let $S$ be a cyclotomic numerical semigroup and let $\mathbf{e}$ be its cyclotomic exponent sequence. Then $e_b = \operatorname{nc}(\nabla_b) -1$ for all $b \in \operatorname{Betti}(S)$. In particular, we have  $\operatorname{Betti}(S) \subseteq \mathcal{E}(S)$.
\end{conjecture}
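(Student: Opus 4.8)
The plan is to reduce Conjecture~\ref{con:betti} to the Hilbert‑series criterion of Proposition~\ref{prop:ci-char} and then to extend Theorem~\ref{thm:ces:betti} from $\operatorname{U}(\operatorname{Betti}(S))$ to all of $\operatorname{Betti}(S)$, exploiting the finiteness of the support of $\mathbf e$ in an essential way. Suppose $S$ is cyclotomic, so $\mathcal{E}(S)$ is finite, and let $A$ be the minimal generating system of $S$. Combining~\eqref{eq:cyc-exp-H} with~\eqref{eq:dseries} gives $\operatorname{H}_S(x)=\prod_{n\in A}(1-x^n)^{-1}\prod_{d\in\mathcal{E}(S)}(1-x^d)^{e_d}$, a finite product here. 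Hence it suffices to prove that $\operatorname{Betti}(S)=\mathcal{E}(S)$ and that $e_b=\operatorname{nc}(\nabla_b)-1$ for every $b\in\operatorname{Betti}(S)$: the displayed identity then becomes~\eqref{eq:ci:hilbert}, Proposition~\ref{prop:ci-char} yields that $S$ is a complete intersection, and the asserted consequence $\operatorname{Betti}(S)\subseteq\mathcal{E}(S)$ is automatic since $\operatorname{nc}(\nabla_b)-1\ge 1$ for a Betti element while a minimal generator, having a single factorization, is never a Betti element.

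First I would argue by induction along a linear extension of $(\mathcal{E}(S),\le_S)$, writing $\mathcal{E}(S)=\{b_1,\ldots,b_k\}$ with $b_i\le_S b_j\Rightarrow i\le j$, putting $\Lambda_i=\{b_1,\ldots,b_i\}$, and tracking the coefficients $r_i$ of~\eqref{eq:ri:1} exactly as in Section~\ref{sec:thm2}. The base case, and every step in which the $b_i$ under consideration already lies in $\operatorname{U}(\operatorname{Betti}(S))$, are covered verbatim by Theorems~\ref{thm:betti-minimals:omega} and~\ref{thm:ces:betti}. The genuinely new ingredient required is a version of Theorem~\ref{thm:brf:sorted} that does not assume $\downarrow\! b$ to be a chain: for cyclotomic $S$ and any $b\in\operatorname{Betti}(S)$, the graph $\nabla_b$ should have exactly $\operatorname{i}(b)+1$ connected components, i.e. a single non‑singleton $R$‑class, which one would identify with $\operatorname{B}(b;\Lambda)$ for $\Lambda$ the set of Betti elements strictly below $b$. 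Granting this, the recursion of Lemma~\ref{lem:brf:recurrence} together with the unique‑decomposition bookkeeping of Lemma~\ref{lem:brf:sorted:2} transports~\eqref{eq:ri:values} and~\eqref{eq:ri:recurrence} through the inductive step just as in the proof of Theorem~\ref{thm:ces:betti}, forcing $e_{b_i}=\operatorname{i}(b_i)>0$ and $b_i\in\operatorname{Betti}(S)\cap\mathcal{E}(S)$.

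The hard part is precisely this connectivity of $\operatorname{B}(b;\Lambda)$ in $\nabla_b$ when $\downarrow\! b$ fails to be totally ordered: two incomparable Betti elements $b',b''<_S b$ may admit isolated factorizations with overlapping support, so Lemma~\ref{lem:disjoint-betti} no longer applies, the uniqueness argument underlying Lemma~\ref{lem:brf:sorted:2} collapses, and a priori $\nabla_b$ can acquire an extra non‑singleton $R$‑class, making $e_b$ differ from $\operatorname{nc}(\nabla_b)-1$; for non‑cyclotomic semigroups this kind of failure is severe, as Example~\ref{ex:ces}\ref{item:ex:b} shows. The only leverage against this is cyclotomicity itself: by Proposition~\ref{prop:cyclotomic} all roots of $\operatorname{P}_S$ lie on the unit circle and $\operatorname{P}_S$ is a finite product of cyclotomic polynomials, and one would like to argue that any defect of the above kind propagates into the rational function $\operatorname{H}_S(x)\prod_{j<i}(1-x^{b_j})^{-e_{b_j}}$ and eventually manifests as a pole of $\operatorname{P}_S$ or an off‑circle zero, a contradiction. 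Making this heuristic precise seems essentially as hard as Conjecture~\ref{con:cyclo-ci} itself, and I do not expect it to be routine. What the strategy does deliver unconditionally is the case $\operatorname{U}(\operatorname{Betti}(S))=\operatorname{Betti}(S)$ and $\operatorname{U}(\mathcal{E}(S))=\mathcal{E}(S)$ — the ``forest'' case — in which no incomparable pair ever sits below a Betti element and the induction goes through using only Theorem~\ref{thm:ces:betti}; this is the content of Corollary~\ref{cor:cyclo:forest:3}.

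An alternative line of attack would be to fix a minimal element $b_1$ of $\operatorname{Betti}(S)$ (equivalently of $\mathcal{E}(S)$, by Theorem~\ref{thm:betti-minimals:omega}), try to realise $S$ as a gluing $a_1S_1+_{b_1}a_2S_2$ with $S_1$ and $S_2$ again cyclotomic, and then induct on the embedding dimension, transporting cyclotomic exponents and Betti data through~\eqref{eq:gluing:pol}. Here the obstruction is that a minimal Betti element need not be a gluing degree, so one would first have to prove that cyclotomicity forces it to be one — a statement which, if available, would simultaneously settle Conjectures~\ref{con:msg} and~\ref{con:cyclo-ci}.
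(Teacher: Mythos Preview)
The statement you are addressing is presented in the paper as a \emph{conjecture}, not a theorem: the paper contains no proof of it, and Proposition~\ref{prop:conjectures} shows that Conjectures~\ref{con:msg} and~\ref{con:betti} together are equivalent to Conjecture~\ref{con:cyclo-ci}, the main open problem of the paper. So there is no ``paper's own proof'' to compare your proposal against.

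Your write-up is honest about this. You correctly identify that the inductive mechanism of Section~\ref{sec:thm2} breaks down precisely when $\downarrow\! b$ fails to be a chain, because Lemma~\ref{lem:disjoint-betti} (and hence the unique-decomposition argument behind Lemma~\ref{lem:brf:sorted:2}) no longer applies, and you rightly conclude that salvaging the connectivity of $\operatorname{B}(b;\Lambda)$ in the general case ``seems essentially as hard as Conjecture~\ref{con:cyclo-ci} itself.'' That assessment matches the paper's own stance: in the open-questions subsection the authors say explicitly that their techniques are not enough even for the case $|\mathcal{E}(S)|=2$ with two $\le_S$-incomparable elements. Your gluing alternative runs into the same wall, since showing that a minimal Betti element is a gluing degree is tantamount to proving $S$ is a complete intersection. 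In short, what you have written is a coherent discussion of the obstruction and a correct identification of the unconditional partial case (Corollary~\ref{cor:cyclo:forest:3}), but it is not a proof of the conjecture, and none is expected here.
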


These conjectures are motivated by the following result.

\begin{proposition} \label{prop:conjectures}
Conjecture~\ref{con:cyclo-ci} holds if and only if Conjectures~\ref{con:msg} and~\ref{con:betti} hold.
\end{proposition}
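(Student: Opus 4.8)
The plan is to prove the two implications of the equivalence in turn. Throughout I would take $S\neq\mathbb{N}$ (the semigroup $\mathbb{N}$ is simultaneously cyclotomic and a complete intersection and has no Betti elements, so it is harmless). For the implication ``Conjecture~\ref{con:cyclo-ci} $\Rightarrow$ Conjectures~\ref{con:msg} and~\ref{con:betti}'', let $S$ be cyclotomic; by Conjecture~\ref{con:cyclo-ci} it is then a complete intersection, so its cyclotomic exponent sequence is the one listed right after Lemma~\ref{lem:sequence}, namely $e_1=1$, $e_n=-1$ for every minimal generator $n$, $e_b=\operatorname{nc}(\nabla_b)-1\geq 1$ for every Betti element $b$, and $e_j=0$ otherwise. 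Since Betti elements have denumerant exceeding one they are not minimal generators, so these four cases are disjoint; inspecting the signs yields ``$n$ is a minimal generator if and only if $e_n<0$'' (Conjecture~\ref{con:msg}), and inspecting the Betti row yields $e_b=\operatorname{nc}(\nabla_b)-1$ for all $b\in\operatorname{Betti}(S)$ together with $\operatorname{Betti}(S)\subseteq\mathcal E(S)$ (Conjecture~\ref{con:betti}), in agreement with the fact that $\operatorname{Betti}(S)=\mathcal E(S)$ for complete intersections.

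For the converse it suffices, since complete intersections are already known to be cyclotomic, to show that Conjectures~\ref{con:msg} and~\ref{con:betti} imply that every cyclotomic numerical semigroup $S$ is a complete intersection. Assume both conjectures and let $S$ be cyclotomic, so $\mathcal E(S)$ is finite. By Conjecture~\ref{con:msg}, no element of $\mathcal E(S)$ is a minimal generator, hence $e_d\not<0$ for $d\in\mathcal E(S)$; since $e_d\neq 0$ by definition of $\mathcal E(S)$, this gives $e_d\geq 1$. By Conjecture~\ref{con:betti}, $\operatorname{Betti}(S)\subseteq\mathcal E(S)$ and $e_b=\operatorname{nc}(\nabla_b)-1$ for every $b\in\operatorname{Betti}(S)$. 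Finally, Proposition~\ref{prop:sum-e} (available because $S$ is cyclotomic) gives $\sum_{j\geq 1}e_j=0$, which, combined with $e_1=1$ and $e_n=-1$ at the $\operatorname{e}(S)$ minimal generators from Theorem~\ref{thm:ces:generators}, yields $\sum_{d\in\mathcal E(S)}e_d=\operatorname{e}(S)-1$.

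Putting these facts together, I would split the last sum according to the inclusion $\operatorname{Betti}(S)\subseteq\mathcal E(S)$:
\[
\operatorname{e}(S)-1=\sum_{d\in\mathcal E(S)}e_d=\sum_{b\in\operatorname{Betti}(S)}\bigl(\operatorname{nc}(\nabla_b)-1\bigr)+\sum_{d\in\mathcal E(S)\setminus\operatorname{Betti}(S)}e_d.
\]
The first sum on the right is the cardinality of a minimal presentation of $S$, hence at least $\operatorname{e}(S)-1$; every term of the second sum is at least $1$. Consequently the second sum vanishes---so in fact $\mathcal E(S)=\operatorname{Betti}(S)$---and $\sum_{b\in\operatorname{Betti}(S)}\bigl(\operatorname{nc}(\nabla_b)-1\bigr)=\operatorname{e}(S)-1$, which says that a minimal presentation of $S$ has cardinality $\operatorname{e}(S)-1$, i.e.\ $S$ is a complete intersection. (Alternatively, once $\mathcal E(S)=\operatorname{Betti}(S)$ and $e_b=\operatorname{nc}(\nabla_b)-1$ are known, combining \eqref{eq:cyc-exp-H} with \eqref{eq:dseries} rewrites $\operatorname{H}_S$ in the form \eqref{eq:ci:hilbert} and Proposition~\ref{prop:ci-char} finishes.)

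The forward direction is just bookkeeping with the complete-intersection formula for $\mathbf{e}$. The one point that needs care, and the step I would single out as the main obstacle, is in the converse: Conjecture~\ref{con:betti} supplies only the containment $\operatorname{Betti}(S)\subseteq\mathcal E(S)$, not equality, so the ``extra'' elements of $\mathcal E(S)$ must be excluded by a global argument rather than elementwise. What forces them away is precisely the conjunction of three facts---positivity $e_d\geq 1$ throughout $\mathcal E(S)$ (from Conjecture~\ref{con:msg}), the identity $\sum_{j\geq1}e_j=0$ (Proposition~\ref{prop:sum-e}, which is where cyclotomicity enters), and the lower bound $\operatorname{e}(S)-1$ on the size of a minimal presentation---which together leave no slack in the displayed equation.
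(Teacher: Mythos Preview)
Your proof is correct and follows essentially the same route as the paper: both directions use the explicit complete-intersection exponent sequence for the forward implication, and for the converse combine Proposition~\ref{prop:sum-e} with Theorem~\ref{thm:ces:generators} and the two conjectures to bound the size of a minimal presentation by $\operatorname{e}(S)-1$. Your version is slightly more explicit---working through $\mathcal E(S)$ and squeezing with the lower bound to extract the extra conclusion $\mathcal E(S)=\operatorname{Betti}(S)$---whereas the paper simply records the inequality $\operatorname{e}(S)\ge 1+\sum_{b}(\operatorname{nc}(\nabla_b)-1)$ and stops; one small wording issue is that ``no element of $\mathcal E(S)$ is a minimal generator'' is true by definition of $\mathcal E(S)$, and it is Conjecture~\ref{con:msg} that then forces $e_d\ge 0$.
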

\begin{proof}
First, we note that Conjectures \ref{con:msg} and \ref{con:betti} are directly implied by Conjecture~\ref{con:cyclo-ci} and Proposition~\ref{prop:ci-char}. Now let us assume that $S$ is a cyclotomic numerical semigroup such that Conjectures~\ref{con:msg} and~\ref{con:betti} hold for $S$ and let us prove that Conjecture~\ref{con:cyclo-ci} holds for $S$ or, equivalently, 
  that $S$ is a complete intersection. 
  Since Conjecture~\ref{con:msg} holds for $S$, from Proposition~\ref{prop:sum-e} and Theorem~\ref{thm:ces:generators} we obtain 
  \begin{equation*}
      0 = \sum_{d\ge1 } e_d = -\operatorname{e}(S) + \sum_{\substack{d \ge1\\e_d > 0}} e_d.
  \end{equation*}
  From these equalities, Conjecture~\ref{con:betti} and the fact that $e_1 = 1$ (by Theorem~\ref{thm:ces:generators}), we conclude that
   \[
      \operatorname{e}(S)  = \sum_{\substack{d \ge1\\e_d > 0}} e_d  \ge 1 + \! \! \! \! \! \!  \! \! \! \! \! \! \sum_{\qquad b\in\operatorname{Betti}(S)} (\operatorname{nc}(\nabla_b)-1),
  \]
  which shows that the cardinality of any minimal presentation of $S$ is bounded by $\operatorname{e}(S) -1,$ and therefore that $S$ is a complete intersection (see Section~\ref{sec:pre:ci}).
\end{proof}

Theorem~\ref{thm:ces:generators} shows one direction of Conjecture~\ref{con:msg}. Here we show that this conjecture holds for a large set of cyclotomic numerical semigroups.

\begin{corollary} \label{cor:cyclo:forest:1}
  Let $S$ be a numerical semigroup minimally generated by $A$. If $\operatorname{U}(\mathcal{E}(S)) = \mathcal{E}(S)$, then $\mathcal{E}(S) \subseteq \operatorname{Betti}(S)$, $S$ is cyclotomic and Conjecture~\ref{con:msg} holds for $S$.
\end{corollary}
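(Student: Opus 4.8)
The plan is to derive everything directly from Theorem~\ref{thm:ces:betti} and Theorem~\ref{thm:ces:generators}; no new machinery is needed. First I would apply Theorem~\ref{thm:ces:betti} to get $\operatorname{U}(\operatorname{Betti}(S)) = \operatorname{U}(\mathcal{E}(S))$, and then feed in the hypothesis $\operatorname{U}(\mathcal{E}(S)) = \mathcal{E}(S)$ to obtain $\operatorname{U}(\operatorname{Betti}(S)) = \mathcal{E}(S)$. Since $\operatorname{U}(X) \subseteq X$ for any partially ordered set $X$, the left-hand side is contained in $\operatorname{Betti}(S)$, which gives $\mathcal{E}(S) \subseteq \operatorname{Betti}(S)$, the first assertion.

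Next I would prove that $S$ is cyclotomic, i.e.\ that $\mathbf{e}$ has finite support. The set $\operatorname{Betti}(S)$ is finite for any numerical semigroup (the cardinality of a minimal presentation is $\sum_{b \in \operatorname{Betti}(S)}(\operatorname{nc}(\nabla_b)-1)$, a finite sum of positive terms), so $\mathcal{E}(S)$, being a subset of it, is finite as well. By Theorem~\ref{thm:ces:generators} together with Remark~\ref{rem:hs}, the support $\{j : e_j \ne 0\}$ is contained in $\{1\} \cup A \cup \mathcal{E}(S)$; as $A$ and $\mathcal{E}(S)$ are both finite, $\mathbf{e}$ has finite support, and hence $S$ is cyclotomic by Definition~\ref{def:CNS}.

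Finally, for Conjecture~\ref{con:msg}: the implication ``$n$ a minimal generator $\Rightarrow e_n < 0$'' is immediate from Theorem~\ref{thm:ces:generators}(c), which gives $e_n = -1$. For the converse, suppose $e_n < 0$. Then $n \ne 1$ since $e_1 = 1 > 0$, so $n \ge 2$ and $e_n \ne 0$; if $n$ were not a minimal generator it would lie in $\mathcal{E}(S) = \operatorname{U}(\operatorname{Betti}(S))$, and then Theorem~\ref{thm:ces:betti} would give $e_n = \operatorname{nc}(\nabla_n) - 1 \ge 1$ (because $n \in \operatorname{Betti}(S)$ forces $\operatorname{nc}(\nabla_n) \ge 2$), contradicting $e_n < 0$. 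Hence $n$ is a minimal generator, and Conjecture~\ref{con:msg} holds for $S$.

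I do not expect a genuine obstacle here: the work is entirely in unwinding the hypothesis so that it lands inside the regime $\operatorname{U}(\operatorname{Betti}(S))$ where Theorem~\ref{thm:ces:betti} computes $e_b$ exactly; the only point deserving care is recording that a Betti element has $\operatorname{nc}(\nabla_b) \ge 2$, so that the exponent at such an element is strictly positive, which is what rules out $e_n < 0$ at a non-generator.
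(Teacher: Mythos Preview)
Your argument is correct and follows essentially the same route as the paper: apply Theorem~\ref{thm:ces:betti} under the hypothesis to get $\mathcal{E}(S)=\operatorname{U}(\operatorname{Betti}(S))\subseteq\operatorname{Betti}(S)$ with $e_b=\operatorname{nc}(\nabla_b)-1>0$ for each $b\in\mathcal{E}(S)$, deduce finiteness of the support, and then use the positivity of these exponents together with Theorem~\ref{thm:ces:generators} to pin any $n$ with $e_n<0$ down as a minimal generator. The only difference is expository: you spell out why $\operatorname{Betti}(S)$ is finite and decompose the support explicitly, whereas the paper invokes these facts tacitly.
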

\begin{proof}
  From Theorem~\ref{thm:ces:betti}, we find that $\mathcal{E}(S) = \operatorname{U}(\operatorname{Betti}(S))$ and that $e_b = \operatorname{nc}(\nabla_b) - 1 > 0$ for every $b \in \mathcal{E}(S)$. In particular, $\mathcal{E}(S)$ is finite and, thus, $S$ is cyclotomic (Definition~\ref{def:CNS}). 
  Let $n \in \mathbb{N}$ with $e_n < 0$. We have $n \not \in \mathcal{E}(S)$ because $e_b > 0$ for every $b \in \mathcal{E}(S)$. Moreover, recall that $e_0 = 0$ and $e_1 = 1$ by Theorem~\ref{thm:ces:generators}, so $n \ge 2$. Since $\mathcal{E}(S)$ is the set of positive integers $j$ such that $j \ge 2$, $e_j \ne 0$ and $j$ is not a minimal generator, we conclude that $n$ is a minimal generator of $S$.
\end{proof}

As already mentioned in the introduction, computations suggest that these numerical semigroups arise very frequently. 

\begin{corollary} \label{cor:cyclo:forest:2}
  Let $S$ be a cyclotomic numerical semigroup. If $\operatorname{Betti}(S) = \operatorname{U}(\operatorname{Betti}(S))$ and Conjecture~\ref{con:msg} holds for $S$, then $S$ is a complete intersection.
\end{corollary}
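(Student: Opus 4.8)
The plan is to reduce the statement to the constructive half of Proposition~\ref{prop:conjectures}, using Theorem~\ref{thm:ces:betti} to verify Conjecture~\ref{con:betti} for $S$. First I would note that the hypothesis $\operatorname{Betti}(S) = \operatorname{U}(\operatorname{Betti}(S))$ does exactly this: by Theorem~\ref{thm:ces:betti} we have $\operatorname{U}(\operatorname{Betti}(S)) = \operatorname{U}(\mathcal{E}(S))$, so $\operatorname{Betti}(S) = \operatorname{U}(\mathcal{E}(S)) \subseteq \mathcal{E}(S)$, and the same theorem yields $e_b = \operatorname{nc}(\nabla_b) - 1$ for every $b \in \operatorname{U}(\operatorname{Betti}(S)) = \operatorname{Betti}(S)$. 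Hence $S$ is a cyclotomic numerical semigroup for which both Conjecture~\ref{con:msg} (by hypothesis) and Conjecture~\ref{con:betti} (just verified) hold.

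Next I would rerun the counting argument from the proof of Proposition~\ref{prop:conjectures}, which uses only these two facts together with cyclotomicity. Since Conjecture~\ref{con:msg} holds for $S$, the indices $n$ with $e_n < 0$ are precisely the minimal generators of $S$, each contributing $-1$ by Theorem~\ref{thm:ces:generators}, so $\sum_{e_n < 0} e_n = -\operatorname{e}(S)$; moreover every $d \in \mathcal{E}(S)$ satisfies $e_d > 0$ (it is not a minimal generator and $e_d \ne 0$, so $e_d \not< 0$). Combining this with $e_1 = 1$ from Theorem~\ref{thm:ces:generators} and with Proposition~\ref{prop:sum-e} gives
\[
0 \;=\; \sum_{j \ge 1} e_j \;=\; 1 - \operatorname{e}(S) + \sum_{d \in \mathcal{E}(S)} e_d,
\]
that is, $\sum_{d \in \mathcal{E}(S)} e_d = \operatorname{e}(S) - 1$. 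As $\operatorname{Betti}(S) \subseteq \mathcal{E}(S)$ and all these exponents are positive, we get $\sum_{b \in \operatorname{Betti}(S)} e_b \le \operatorname{e}(S) - 1$, and by Conjecture~\ref{con:betti} the left-hand side equals $\sum_{b \in \operatorname{Betti}(S)}(\operatorname{nc}(\nabla_b) - 1)$, which is the cardinality of any minimal presentation of $S$ (see Section~\ref{sec:pre:presentations}).

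Finally, since a minimal presentation of any numerical semigroup has cardinality at least $\operatorname{e}(S) - 1$ (see Section~\ref{sec:pre:ci}), the displayed inequality must be an equality, so that cardinality equals $\operatorname{e}(S) - 1$ and $S$ is a complete intersection. I do not expect a genuine obstacle here: the only real content is the first step, where Theorem~\ref{thm:ces:betti} converts the forest hypothesis on $(\operatorname{Betti}(S), \le_S)$ into a verification of Conjecture~\ref{con:betti} for $S$; after that the result is just the (constructive, per-semigroup) half of Proposition~\ref{prop:conjectures}. What remains is bookkeeping, in particular checking that $\{j : e_j \ne 0\}$ decomposes as the disjoint union $\{1\} \cup A \cup \mathcal{E}(S)$ (using $0, 1 \notin \mathcal{E}(S)$) and that all the sums involved are finite, which holds because $S$ is cyclotomic and hence $\mathcal{E}(S)$ is finite.
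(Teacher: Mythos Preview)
Your proposal is correct and follows the same route as the paper: apply Theorem~\ref{thm:ces:betti} under the hypothesis $\operatorname{Betti}(S)=\operatorname{U}(\operatorname{Betti}(S))$ to obtain $\operatorname{Betti}(S)\subseteq\mathcal{E}(S)$ and $e_b=\operatorname{nc}(\nabla_b)-1$ for every Betti element, then invoke Proposition~\ref{prop:conjectures}. The only difference is cosmetic: the paper simply cites Proposition~\ref{prop:conjectures}, whereas you unfold its per-semigroup counting argument; your version is in fact a bit more explicit than the paper in noting that Theorem~\ref{thm:ces:betti} supplies the full strength of Conjecture~\ref{con:betti} (the exponent formula, not just the containment).
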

\begin{proof}
  From Theorem~\ref{thm:ces:betti} and $\operatorname{Betti}(S) = \operatorname{U}(\operatorname{Betti}(S))$, we obtain $\operatorname{Betti}(S) \subseteq \mathcal{E}(S)$. The result now follows from Proposition~\ref{prop:conjectures}.
\end{proof}

\begin{corollary} \label{cor:cyclo:forest:3}
   Let $S$ be a numerical semigroup. If $\operatorname{Betti}(S) = \operatorname{U}(\operatorname{Betti}(S))$ and $\mathcal{E}(S) = \operatorname{U}(\mathcal{E}(S))$, then $S$ is a complete intersection.
\end{corollary}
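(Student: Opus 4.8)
The plan is to derive this immediately by chaining Corollaries~\ref{cor:cyclo:forest:1} and~\ref{cor:cyclo:forest:2} in the correct order. First I would invoke the hypothesis $\mathcal{E}(S) = \operatorname{U}(\mathcal{E}(S))$, which is precisely the assumption of Corollary~\ref{cor:cyclo:forest:1}; that corollary then hands us, in one stroke, that $S$ is cyclotomic and that Conjecture~\ref{con:msg} holds for $S$ (that is, $n$ is a minimal generator of $S$ precisely when $e_n<0$). Only once $S$ is known to be cyclotomic are we entitled to apply Corollary~\ref{cor:cyclo:forest:2}; feeding it the cyclotomicity of $S$, the validity of Conjecture~\ref{con:msg} for $S$, and the remaining hypothesis $\operatorname{Betti}(S) = \operatorname{U}(\operatorname{Betti}(S))$, we would conclude that $S$ is a complete intersection.

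Alternatively, one could argue directly, in the spirit of the proof of Corollary~\ref{cor:betti-sorted:ci}. By Theorem~\ref{thm:ces:betti} we have $\operatorname{U}(\operatorname{Betti}(S)) = \operatorname{U}(\mathcal{E}(S))$, so the two forest hypotheses together force $\operatorname{Betti}(S) = \mathcal{E}(S)$; since $\operatorname{Betti}(S)$ is finite, this says that $\mathbf{e}$ has finite support and $S$ is cyclotomic. Theorem~\ref{thm:ces:betti} also gives $e_b = \operatorname{nc}(\nabla_b)-1$ for every $b \in \operatorname{Betti}(S)$, while Theorem~\ref{thm:ces:generators} pins down the remaining values of $\mathbf{e}$ (namely $e_1 = 1$, $e_n = -1$ at each minimal generator $n$, and $e_j = 0$ otherwise). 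Substituting these exponents into the Euler product~\eqref{eq:cyc-exp} and using $\operatorname{H}_S(x) = (1-x)^{-1}\operatorname{P}_S(x)$ would then yield
\[
\operatorname{H}_S(x)=\frac{\prod_{b\in\operatorname{Betti}(S)}(1-x^b)^{\operatorname{nc}(\nabla_b)-1}}{\prod_{n \in A}(1-x^{n})},
\]
where $A$ is the minimal generating set of $S$, after which Proposition~\ref{prop:ci-char} gives that $S$ is a complete intersection.

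I do not expect any genuine obstacle here: the entire content is already encapsulated in Theorem~\ref{thm:ces:betti} and Proposition~\ref{prop:conjectures} (equivalently, in the two preceding corollaries). The only point that demands a little care in the first route is the logical sequencing — cyclotomicity of $S$ must be extracted from $\mathcal{E}(S) = \operatorname{U}(\mathcal{E}(S))$ via Corollary~\ref{cor:cyclo:forest:1} \emph{before} appealing to Corollary~\ref{cor:cyclo:forest:2}, whose hypotheses presuppose that $S$ is cyclotomic; the second route avoids this subtlety by working with the explicit Hilbert series from the outset.
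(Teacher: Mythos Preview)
Your first route is exactly the paper's proof: it simply says ``This follows by combining Corollaries~\ref{cor:cyclo:forest:1} and~\ref{cor:cyclo:forest:2},'' and your care about the logical sequencing (extract cyclotomicity from Corollary~\ref{cor:cyclo:forest:1} before invoking Corollary~\ref{cor:cyclo:forest:2}) is precisely the point. Your alternative direct route via Theorem~\ref{thm:ces:betti} and Proposition~\ref{prop:ci-char} is also correct and is essentially what those two corollaries unpack anyway.
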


\begin{proof}
 This follows by combining Corollaries~\ref{cor:cyclo:forest:1} and~\ref{cor:cyclo:forest:2}.
\end{proof}

\begin{example}
Let $S=\langle 8, 12, 18, 25\rangle$. Then \[\operatorname{P}_S(x)=\frac{(1-x)(1-x^{24})(1-x^{36})(1-x^{50})}{(1-x^8)(1-x^{12})(1-x^{18})(1-x^{25})}.\] Then $\mathcal{E}(S) = \operatorname{Betti}(S)=\{24,36,50\}$. The graph $(\operatorname{Betti}(S),\le_S)$ is depicted below. 

\begin{center}
\begin{tikzpicture}[
            > = stealth, 
            auto,
            node distance = 1.5cm, 
            thick 
        ]

        \tikzstyle{every state}=[
            draw = black,
            thick,
            fill = white,
            minimum size = 4mm
        ]

        \node[state] (s) {$36$};
        \node[state] (v1) [above left of=s] {$50$};
        \node[state] (v2) [above right of=s] {$24$};
        
        \path[->] (s) edge node {} (v1);
        \path[->] (s) edge node {} (v2);
    \end{tikzpicture}
\end{center}

 From this graph it follows that $\operatorname{Betti}(S) = \operatorname{U}(\operatorname{Betti}(S))$ and, thus, $S$ is a complete intersection. 
\end{example}

Finally, let us make a few comments on Conjecture~\ref{con:msg}. In \cite[Lemma 14]{cyclotomic} it is shown that this conjecture holds true under several restrictions on $S$. We notice that the restrictions in part (a) and (b) of Lemma 14 from \cite{cyclotomic} cannot both hold at the same time, hence the statement of \cite[Lemma 14]{cyclotomic} is void, in the sense that it does not find cyclotomic numerical semigroups satisfying Conjecture~\ref{con:msg}. We conclude this section by improving  \cite[Lemma 14]{cyclotomic}. 

\begin{proposition}
  Let $S$ be a cyclotomic numerical semigroup with cyclotomic exponent sequence $\bf{e}$. Let $j \in \mathbb{N}$ with $e_j < 0$ and $j < \min \{d \in \mathbb{N}: e_{d} > 0\}$. Then $j$ is a minimal generator of $S$. As a consequence, if $\max \{d \in \mathbb{N}: e_{d} < 0\} < \min \{d \in \mathbb{N}: e_{d} > 0\}$, then Conjecture~\ref{con:msg} holds for $S$.
\end{proposition}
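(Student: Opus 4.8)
The plan is to deduce the first assertion from Theorem~\ref{thm:ces:generators} and Theorem~\ref{thm:betti-minimals:omega}, and then to read off the consequence in one line. First I would record the trivial normalisations. Since $e_j<0$, the semigroup $S$ is not $\mathbb N$, whose cyclotomic exponent sequence is identically zero; hence $e_1=1$ by Theorem~\ref{thm:ces:generators}(a). As $e_1=1$ while $e_j<0$, we have $j\ne 1$, so $j\ge 2$; moreover $e_j\ne 0$ together with part~(b) of Theorem~\ref{thm:ces:generators} yields $j\in S$. Now, by the very definition of $\mathcal E(S)$, the element $j$ is either a minimal generator of $S$ --- which is the desired conclusion --- or else $j\in\mathcal E(S)$.

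So suppose, aiming at a contradiction, that $j\in\mathcal E(S)$. Because $(S,\le_S)$ has finite down-sets, the set $\{d\in\mathcal E(S):d\le_S j\}$ is finite and nonempty, hence has a $\le_S$-minimal element $\alpha$; by transitivity of $\le_S$ one checks at once that $\alpha\in\operatorname{Minimals}_{\le_S}\mathcal E(S)$. By Theorem~\ref{thm:betti-minimals:omega} we have $e_\alpha=\mathfrak d(\alpha)-1$, and since $\mathfrak d(\alpha)\ge 2$ this gives $e_\alpha>0$. On the other hand $\alpha\le_S j$ means $j-\alpha\in S\subseteq\mathbb N$, so $\alpha\le j$; and $\alpha\ne j$, since otherwise $e_j=e_\alpha>0$ would contradict $e_j<0$. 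Hence $\alpha<j$. But $\alpha\in\mathbb N$ with $e_\alpha>0$ forces $\alpha\ge\min\{d\in\mathbb N:e_d>0\}$, whereas the hypothesis says $j<\min\{d\in\mathbb N:e_d>0\}$; combined with $\alpha<j$ this is absurd. Therefore $j\notin\mathcal E(S)$, and $j$ is a minimal generator of $S$, as claimed.

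For the consequence, assume $\max\{d\in\mathbb N:e_d<0\}<\min\{d\in\mathbb N:e_d>0\}$. Then every $n\in\mathbb N$ with $e_n<0$ satisfies $n\le\max\{d:e_d<0\}<\min\{d:e_d>0\}$, so the first part applies and $n$ is a minimal generator of $S$. Conversely, every minimal generator $n$ of $S$ satisfies $e_n=-1<0$ by Theorem~\ref{thm:ces:generators}(c). Hence $n$ is a minimal generator of $S$ if and only if $e_n<0$, which is precisely the statement of Conjecture~\ref{con:msg} for $S$.

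The argument is short, and there is no genuine obstacle once Theorems~\ref{thm:ces:generators} and~\ref{thm:betti-minimals:omega} are in hand; the only point requiring a little care is that any element of $\mathcal E(S)$ lies $\le_S$-above a member of $\operatorname{Minimals}_{\le_S}\mathcal E(S)$, and that such minimal members carry a strictly positive cyclotomic exponent --- this is exactly where Theorem~\ref{thm:betti-minimals:omega} supplies the substance. (It is worth noting that cyclotomicity of $S$ is never actually used; it appears in the hypothesis only because the conclusion is phrased through Conjecture~\ref{con:msg}, which is a statement about cyclotomic semigroups.)
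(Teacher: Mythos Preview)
Your proof is correct and follows essentially the same route as the paper's: both argue that if $j$ were not a minimal generator then a $\le_S$-minimal element $\alpha$ of $\mathcal{E}(S)$ with $\alpha\le_S j$ would have $e_\alpha>0$ by Theorem~\ref{thm:betti-minimals:omega}, contradicting $j<\min\{d:e_d>0\}$. The only cosmetic difference is that the paper phrases the dichotomy as ``$j$ is a minimal generator or $\mathfrak d(j)\ge 2$'' (via Theorem~\ref{thm:ces:generators}), whereas you phrase it as ``$j$ is a minimal generator or $j\in\mathcal E(S)$'' directly from the definition; your version is in fact slightly more direct, and your closing observation that cyclotomicity is never used is correct.
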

\begin{proof}
  Let $j \in \mathbb{N}$ with $e_j < 0$ and $j < \min \{d \in \mathbb{N}: e_{d} > 0\}$. In view of Theorem~\ref{thm:ces:generators}, either $j$ is a minimal generator or $\mathfrak{d}(j) \ge 2$. In the latter case, by Theorem~\ref{thm:betti-minimals:omega}, there is $\alpha \in \operatorname{Minimals}_{\le_S}\mathcal{E}(S)$ with $\alpha \le_S j$, and that $e_{\alpha} > 0$. However, this implies that 
  $$\min \{d \in \mathbb{N}: e_{d} > 0\} \le \alpha < j < \min \{d \in \mathbb{N}: e_{d} > 0\},$$ a contradiction. We conclude that $j$ must be a minimal generator of $S$.
\end{proof}

\subsection{Open questions}

Regarding cyclotomic exponent sequences of arbitrary numerical semigroups, it would be interesting to study the values of these sequences at those Betti elements that are not in $\operatorname{U}(\operatorname{Betti}(S))$, where our current techniques fail to yield any result. 

Coming back to cyclotomic numerical semigroups, by Definition \ref{def:CNS} a numerical semigroup is cyclotomic if and only if its cyclotomic exponent sequence has finitely many non-zero terms. By  
Proposition \ref{prop:cyclotomic} this is equivalent
with $\operatorname{P}_S$ being a product of cyclotomic polynomials.
\begin{question}
Is there a weaker condition than 
the cyclotomic 
exponent sequence having finite support that would 
ensure that $\operatorname{P}_S$ is a product of cyclotomic polynomials?
\end{question}
A possible way to weaken the condition would be, for instance, to require that the 
exponent sequence has infinitely many zeros.

We point out that Conjecture~\ref{con:cyclo-ci} remains open, and it seems likely that further tools are needed in order to tackle it. One could start by showing that if $S$ is a numerical semigroup such that $|\mathcal{E}(S)| \le  2$, then $S$ is a complete intersection. Here we have managed to address the case  $|\mathcal{E}(S)| = 1$ in Theorem~\ref{thm:ces:char}, but our techniques are not enough to analyze the case when $|\mathcal{E}(S)| = 2$ and the two elements in $\mathcal{E}(S)$ are incomparable with respect to $\le_S$. As seen in this section, Conjectures~\ref{con:msg} and~\ref{con:betti} are equivalent with Conjecture~\ref{con:cyclo-ci}. It is thus well possible that at least one of the two is considerably easier than Conjecture~\ref{con:cyclo-ci}, and thus they warrant individual investigation.

\section*{Acknowledgments} 
Part of the work on this paper was done during an internship in the Fall of 2016 carried 
out by the third author at the Max Planck Institute for 
Mathematics in Bonn and 
during a one-week visit in April 2019. He would like to thank the fourth author for the invitation and the institute staff for their hospitality and support. The project was completed during a stay of the first author at the same institute. Substantial progress on this paper was made in February 2017, when the first and the fourth author were invited by the second and third author for one week to the University of Granada. They are grateful for the hospitality, for the inspiring and cheerful atmosphere and, last but not least, for the excellent tapas and wine!


\end{document}